\newcommand{\mm}{\mathfrak{m}}
\newcommand{\R}{\mathbb{R}}
\newcommand{\D}{\Delta}
\newcommand{\Z}{\mathbb{Z}}
\newcommand{\N}{\mathbb{N}}
\newcommand{\GG}{\mathcal{G}}
\newcommand{\KK}{\mathcal{K}}
\renewcommand{\H}{\mathcal{H}}
\newcommand{\E}{{\mathcal E}}
\newcommand{\gD}{\Delta}
\newcommand{\po}{\partial}
\newcommand{\ve}{\varepsilon}
\newcommand{\la}{\langle}
\newcommand{\ra}{\rangle}
\newcommand{\loc}{{\text{loc}}}
\newcommand{\X}{\times}
\renewcommand{\d}{\delta}
\renewcommand{\l}{\lambda}
\renewcommand{\a}{\alpha}
\renewcommand{\b}{\beta}
\newcommand{\s}{\sigma}
\newcommand{\g}{\gamma}
\newcommand{\z}{\zeta}
\newcommand{\Om}{\Omega}
\newcommand{\supp}{\text{\rm supp}\,}
\newcommand{\re}{\mathbb{R}}
\newcommand{\M}{{\mathcal M}}
\newcommand{\V}{{\mathcal V}}
\newcommand{\FSM}{\operatorname{FS}}
\newcommand{\FS}{\operatorname{FS}}
\renewcommand{\div}{\text{\rm div}\,}
\newcommand{\Lip}{\text{\rm Lip}}
\renewcommand{\subset}{\subseteq}
\newcommand{\K}{\mathcal{K}}
\newcommand{\BV}{\operatorname{BV}}
\newcommand{\BUC}{\operatorname{BUC}}
\newcommand{\AP}{\operatorname{AP}}
\newcommand{\PAP}{\operatorname{PAP}}
\newcommand{\dist}{\operatorname{dist}}
\newcommand{\WAP}{\operatorname{WAP}}
\newcommand{\util}[1]{\underline{#1}}
\newcommand{\BB}{{\mathcal B}}
\renewcommand{\AA}{{\mathcal A}}
\newcommand{\CK}{{\mathcal K}}
\newcommand{\medint}{{\mbox{\vrule height3.5pt depth-2.8pt
          width4pt}\mkern-13mu\int\nolimits}}
\newcommand{\Medint}{\mkern12mu\mbox{\vrule height4pt
         depth-3.2pt
          width5pt}\mkern-16.5mu\int\nolimits}
\newcommand{\sgn}{\operatorname{sgn}}
\renewcommand{\supp}{\operatorname{supp}}
\renewcommand{\epsilon}{\varepsilon}
\theoremstyle{plain}
\newtheorem{theorem}{Theorem}[section]
\newtheorem{corollary}{Corollary}[section]
\newtheorem{lemma}{Lemma}[section]
\newtheorem{proposition}{Proposition}[section]
\theoremstyle{definition}
\newtheorem{definition}{Definition}[section]
\theoremstyle{remark}
\newtheorem{remark}{Remark}[section]
\numberwithin{equation}{section}
\begin{document}

\title[Homogenization of Degenerate Porous Medium Type Equations]
{Homogenization  of  Degenerate Porous Medium Type Equations\\ in Ergodic Algebras}
\author{Hermano Frid}
\address{Instituto de Matem\'atica Pura e Aplicada - IMPA\\ Estrada Dona Castorina, 110\\
Rio de Janeiro, RJ, 22460-320, Brazil}
\email{hermano@impa.br}
\author{Jean Silva}
\address{Instituto de Matem\' atica\\
Universidade Federal do Rio de Janeiro\\
P.O. Box 68530, CEP 21945-970, Rio de Janeiro, RJ, Brazil}
\email{jean@im.ufrj.br}

\keywords{two-scale Young Measures, homogenization, algebra with mean 
value, porous medium equation}
\subjclass{Primary: 35B40, 35B35; Secondary: 35L65, 35K55}
\date{}

\begin{abstract} 
We consider the homogenization problem for  general porous medium type equations of the form $u_t=\D f(x,\frac{x}{\ve}, u)$. The pressure function $f(x,y,\cdot)$ may be of two different types. In the type~1 case, $f(x,y,\cdot)$ is a general  strictly increasing function; this is a mildly degenerate case. In the type~2 case,  $f(x,y,\cdot)$ has the form $h(x,y)F(u)+S(x,y)$, where $F(u)$ is just a nondecreasing function;
this is a strongly degenerate case.  We address the initial-boundary value problem for a general,  bounded or unbounded, domain $\Om$, with null (or, more generally, steady) pressure condition on the boundary.   The homogenization is carried out in the general context of ergodic algebras.  As far as the authors know, homogenization of  such degenerate quasilinear parabolic equations is addressed here for the first time.  We also review the existence and stability theory for such equations and establish new results  needed for the homogenization analysis. Further, we include some new results on algebras with mean value, specially a new criterion  establishing the null measure of level sets of elements of the algebra, which is useful in connection with the homogenization of porous medium type equations in the type~2 case.  
\end{abstract}

\maketitle

\section{Introduction} \label{S:0}

In this paper we consider the homogenization of a porous medium type equation of the general form
\begin{equation}\label{eI.01}
u_t=\D f(x,\frac{x}{\ve},  u),
\end{equation}
with $(x,y,t)\in\Om\X\R^n\X(0,\infty)$, and $\Om\subset\R^n$ is a, bounded or unbounded, open set . Here   $f$ is a continuous function of $(x,y,u)$ and $f(x,y,\cdot)$ is locally Lipschitz continuous, uniformly in $(x,y)$, and may be of two different types: 
\begin{itemize}
\item In the type~1 case, $f(x,y,\cdot)$ is a  general strictly increasing function; this is a mildly degenerate case. 

\item In the type~2 case,  $f(x,y,u)$ has the form $h(x,y)F(u)+S(x,y)$, where $F(u)$ is just a nondecreasing function, which is {\em not} strictly increasing;
this is a strongly degenerate case.  Let us denote by $G$ the strictly increasing right-continuous function such that $F(G(v))=v$, for all $v\in\R$.
\end{itemize}

We consider the initial-boundary value problem where we prescribe an initial condition of the form 
\begin{equation}\label{eI.02}
u(x,0)=u_0(x,\frac{x}{\ve}),
\end{equation} 
and a boundary condition of the form  
\begin{equation}\label{eI.03}
f(x,\frac{x}{\ve},u(x,t))\, |\, \po\Om\X(0,\infty)=0. 
\end{equation}
Our analysis applies equally well to a more general, non-homogeneous, boundary condition of the form  
$$
f(x,\frac{x}{\ve}, u(x,t))\,|\,\po\Om\X(0,\infty)=\b(x), 
$$
for a function $\b\in C(\bar\Om)\cap H_\loc^1(\bar\Om)$, where $C\bar \Om)$ denotes the space of bounded continuous functions on $\bar \Om$, and $H_\loc^1(\bar \Om)$ denotes the space of functions defined on $\bar \Om$, which multiplied by any function in $C_c^\infty(\R^n)$ gives a function in the usual Sobolev (Hilbert) space of first order $H^1(\bar\Om)$. We address the homogeneous case \eqref{eI.03} just for conveniency. 

For fixed $(x,u)\in\Om\X\R$, we will assume that $f(x,\cdot,u)\in \AA(\R^n)$, where $\AA(\R^n)$ is a general ergodic algebra, which means an algebra with mean value that is ergodic. An algebra with mean value (algebra w.m.v., for short) is an algebra of bounded uniformly continuous functions on $\R^n$, invariant by translations, each member of which possesses a mean value. It is said to be ergodic, roughly speaking, if, for any function $\varphi\in\AA$, the averages of the translates $\varphi(\cdot+y)$, in balls of radius $R>0$, converge as $R\to\infty$, in the norm of the mean value of the square of the absolute value, to the mean value $\bar\varphi$ of $\varphi$. The most elementary example of an ergodic algebra is the space of continuous functions in $\R^n$, which are periodic in each coordinate $\varphi(x+\tau_i e_i)$, $i=1,\cdots,n$, for certain constants $\tau_i\in\R$, where $e_i$ are the elements of the canonical basis of $\R^n$. Another well known example is the space of almost periodic functions which may be defined as   the closure in the $\sup$ norm of the space spanned by the set $\{e^{i\l\cdot x}\,:\,\l\in\R^n\}$, in the complex case, or the real parts of the functions in such space, in the real case ({\em cf.} \cite{Bo,B}). Many other examples are known such as the space of Fourier-Stieltjes transforms, the weakly almost periodic functions, etc.; we will comment a bit on such examples in Section~\ref{S:1}, below.  We recall that the theory of algebras w.m.v.\ and ergodic algebras was first developed by Zhikov and Krivenko in \cite{ZK} (see also \cite{JKO}). Concerning the initial data in \eqref{eI.02}, we will, in general,  assume that  $u_0\in L^\infty(\Om;\AA(\R^n))$.

We present two main results concerning the homogenization of the initial-boundary value problem \eqref{eI.01}, \eqref{eI.02},\eqref{eI.03}. 

Our first main result applies to an unbounded domain $\Om$ and a general ergodic algebra $\AA(\R^n)$, but we have to restrict  ourselves to initial data that are ``well-prepared'', that is, of the form
$$
u_0(x,y)=g(x,y, \phi_0(x)),
$$
for some $\phi_0\in L^\infty(\Om)$, where, for all $(x,y)\in\Om\X(0,\infty)$,  $g(x,y,\cdot)$ is the strictly increasing right-continuous function satisfying $f(x,y,g(x,y,v))=v$, for all $v\in\R$.

Our second main result applies to a general initial data $u_0\in L^\infty(\Om;\AA(\R^n))$, but we have to compromise  restricting ourselves to a bounded domain $\Om$ and to an ergodic algebra $\AA(\R^n)$ which is a regular algebra w.m.v., examples of the latter being provided by the periodic, almost periodic, and Fourier-Stieltjes transform functions, the precise definition  being left to Section~\ref{S:1}.  

Both main results ({\em cf.} Theorems~\ref{T:6.1} and~\ref{T:7.1}, in Sections~\ref{S:5} and~\ref{S:6}, respectively)  establish, under the mentioned assumptions, the weak star convergence in 
$L^\infty(\Om\X(0,\infty))$ of the entropy solutions $u_\ve(x,t)$ of  \eqref{eI.01},\eqref{eI.02},\eqref{eI.03} (see, Definition~\ref{D2''}) to the entropy solution $\bar u(x,t)$ of the problem
\begin{equation}\label{eI.04}
\begin{aligned}
&u_t=\D \bar f(x,u),\\
&u(x,0)=\bar u_0(x),\\
&\bar f(x,u(x,t))\,|\,\po\Om\X(0,\infty)=0,
\end{aligned}
\end{equation}
where 
$$
\bar u_0(x)=\Medint_{\R^n} u_0(x,y)\,dy,
$$
and $\bar f(x,u)$ is defined by $\bar f(x,\bar g(x,v))=v$, with
\begin{equation}\label{eI.05}
\bar g(x,v):=\Medint_{\R^n}g(x,y,v)\,dy.
\end{equation}
In the case where $f$ is of type~2, $\bar g(x,\cdot)$, defined by  \eqref{eI.05}, may, in general, be discontinuous, which is a bad situation for defining precisely $\bar f(x,\cdot)$, only from the knowledge of $\bar g$. In order to avoid such indetermination, we impose the additional assumption, concerning the functions $h(x,y)$ and $S(x,y)$ appearing in the definition of a pressure function of type~2:
\begin{equation}\label{eI.06}
\mm\left(\{z\in\KK\,:\,\a h(x,z)+S(x, z)=v\}\right)=0,
\end{equation}
for all $(x,v)\in\Om\X\R$,  with $\a$ belonging to the discontinuity set of $G$, where $\KK$ is the compact space associated with the algebra w.m.v.\ $\AA(\R^n)$ 
 and $\mm$ is the corresponding probability measure in $\KK$ (see, Theorem~\ref{T1} below, established in \cite{AFS}). Under the assumption \eqref{eI.06} the function $\bar g(x,\cdot)$, defined in \eqref{eI.05}, turns out to be continuous and strictly increasing, and so is its inverse $\bar f(x,\cdot)$,  which means that the limit problem \eqref{eI.04} has, in any case, a pressure function of type~1. This has the additional advantage of making much easier to check the uniqueness of the solution of the limit problem, since, for pressure functions of type~1, the notions of entropy and weak solutions coincide ({\em cf.} Definition~\ref{D2''}).   

Moreover, both Theorems~\ref{T:6.1} and~\ref{T:7.1} also give the existence of correctors, that is, both theorems assert that
\begin{equation}\label{eI.07}
u_\ve(x,t)-g(x,\frac{x}{\ve}, \bar f(\bar u(x,t)))\to 0, \quad \text{as $\ve\to0$ in $L_\loc^1(\Om\X(0,\infty)$}.
\end{equation}
Again, to obtain \eqref{eI.07} in the case where $f$ is of type~2, we make essencial use of \eqref{eI.06}, which makes a study of sufficient conditions to guarantee this null measure property of great interest, and we obtain such conditions herein ({\em cf.} Lemma~\ref{L:zeromeasure}), as we will comment below. 

Before giving a brief idea of the techniques used to obtain the main homogenization results, Theorems~\ref{T:6.1} and~\ref{T:7.1}, we make a  further brief comment about assumption~\ref{eI.06}.  This assumption leads us to the question about necessary conditions for the vanishing of the measure of  level sets in $\KK$ of an element of 
$\AA(\R^n)$, which is the subject of a general result on algebras w.m.v.\ proved herein (see Lemma~\ref{L:zeromeasure} below). To illustrate this problem, we briefly exhibit here  a  very simple example in the periodic context.    So, let us consider the homogenization of the strongly degenerate equation 
$$
u_t=\Delta (F(u)+\psi_0(\frac{x}{\ve})),
$$
where 
$$
F(u)=\begin{cases} u+\frac12,& u<-\frac12\\ 0,&-\frac12\le u\le \frac12,\\ u-\frac12,& u>\frac12, \end{cases}
$$
and $\psi_0:\R\to\R$ is the periodic function of period 4 defined for $x\in[-2,2]$ by
$$
\psi_0(x)=\begin{cases} -x-2,& -2\le x\le -1,\\ x, & -1\le x\le 1,\\ -x+2, &1\le x\le 2. \end{cases}
$$
Such nonlinear flux function is a prototype for models of the so called Stefan problem (see, e.g., \cite{Dam}).  Our homogenization analysis of such strongly degenerate equations implies in this simple case that the homogenized equation is  
$$
u_t=\Delta \bar f(u),
$$
where
$$
\bar f(u)=\begin{cases} u+\frac12,& u<-\frac{3}2,\\ \frac23 u,&-\frac{3}2\le u\le \frac 32,\\ u-\frac12, &u>\frac32.\end{cases}
$$
So, although the equations to be homogenized are strongly degenerate, the homogenized equation is nondegenerate.  The reason for this  is basically the fact that the level sets of the periodic function $\psi_0$ defined above have Lebesgue measure zero. As remarked after the proof of Lemma~\ref{L:zeromeasure} below, 
if $\AA(\R^n)$ is an algebra w.m.v.\ containing the periodic function $\psi_0$, then small perturbations of the form $\psi=\psi_0+\d \psi_1$, with $\psi_1\in\AA(\R^n)$, will satisfy the zero measure condition on the level sets in $\KK$, which yields a similar nice behavior of the homogenized equation.

Now we make some comments on the techniques used in the proof of Theorems~\ref{T:6.1} and~\ref{T:7.1}.  

For the proof of Theorem~\ref{T:6.1}, the technique used goes back to the work of E and Serre~\cite{ES}, on the periodic homogenization of the one-dimensional conservation law $u_t+(f(u)-V(\frac{x}{\ve}))_x=0$ (see \cite{AF}, for a multidimensional extension to almost periodic homogenization), which in turn is inspired in the work of DiPerna~\cite{DP} on the uniqueness of measure-valued solutions of scalar conservation laws. 
Although it requires the restriction to well-prepared initial data, the technique is otherwise very powerful, since it applies to any ergodic algebra, to unbounded domains, and it implies directly the existence of strong correctors.  This method strongly relies on the concept of two-scale Young measures  first introduced, in the periodic context, by W.~E in \cite{E}, as a nonlinear extension of the concept of two-scale convergence introduced by Nguetseng in \cite{N} and further developed by Allaire in \cite{A}.  The extensions of two-scale Young measures  to almost periodic functions and to general algebras with mean value were established in \cite{AF} and \cite{AFS}, respectively.  

Concerning the proof of Theorem~\ref{T:7.1}, the method applied in this case is completely different from the one for the proof of Theorem~\ref{T:7.1} and it is based on the conversion of the homogenization of the nonlinear parabolic equation into the homogenization of a corresponding fully nonlinear parabolic equation, of a particular simple type, by applying the inverse Laplacian operator on the equation.  To homogenize the corresponding particular fully nonlinear parabolic equation, we use ideas that go back to Evans~\cite{EvH} and Ishii~\cite{I}, among others. Here, the initial data  do not need to be ``well-prepared''.  On the other hand, because of the extensive use of the inverse Laplacian operator on the whole domain, we have to restrict the analysis to  bounded domains. We also have to restrict the homogenization analysis to regular algebras with mean value. The latter is a concept introduced here, whose largest representative so far known is  the Fourier-Stieltjes algebra studied in \cite{FS}. As pointed out in \cite{FS}, the Fourier-Stieltjes algebra {\em strictly} contains the algebra of perturbed almost periodic functions, whose elements can be written as the sum of an almost periodic function and a continuous function vanishing at infinity. Here, we give the easy proof of the fact  that  a regular algebra w.m.v.\  is ergodic.  We further remark that, to obtain the corrector property \eqref{eI.07}, also in this case, essential use is made of the two-scale Young measures, combined with a clever argument by Visintin in \cite{Vi}.

Before concluding this introduction,  we would like to mention that in this paper we also make a detailed review and provide some new results on the existence and stability theory for degenerate parabolic equations of the type considered here. We do that   because  we need some specific results that are not proved elsewhere,  also just to introduce some notations used later on, as well as in order to have our work the most self-contained possible.   

This paper is organized as follows. In Section~\ref{S:1} we recall the concepts of algebra w.m.v., generalized Besicovitch space and ergodic algebra. We also recall  a general result established in \cite{AFS} which relates such algebras and the translation operators acting on them with the continuous functions defined on certain compact spaces and certain groups of homeomorphisms of these compact spaces. In Section~\ref{S:2}, we introduce the concept of regular algebra w.m.v., prove that these are ergodic algebras, and that this concept includes the Fourier-Stieltjes spaces $\FS(\R^n)$. In Section~\ref{S:3}, we briefly recall the general result of \cite{AFS} on the existence of two-scale Young measures associated with a given algebra w.m.v. In Section~\ref{S:4}, we provided a self-contained discussion about the well-posedness of the the initial-boundary value problem with null pressure boundary condition for degenerate porous medium type equations. In Section~\ref{S:5}, we prove Theorem~\ref{T:6.1} on the homogenization of \eqref{eI.01} on an unbounded domain $\Om$, for a general ergodic algebra, under the restriction to well-prepared initial data. Finally, in Section~\ref{S:6}, we prove Theorem~\ref{T:7.1} on the homogenization of \eqref{eI.01} on a bounded domain $\Om$, for regular algebras w.m.v., but for general initial data. 

\section{Ergodic Algebras}\label{S:1}

In this section we recall some  basic facts about algebras with mean values and ergodic algebras that will be needed for the purposes of this paper. To begin with, we recall 
the notion of mean value for functions defined in $\re^n$. 

\begin{definition}\label{D:3} Let $g\in L_\loc^1(\R^n)$. A number $M(g)$ is called the {\em mean value of $g$} if
\begin{equation}\label{e1.2}
\lim_{\ve \to0} \int_Ag(\ve^{-1}x)\,dx=|A|M(g)
\end{equation}
for any Lebesgue measurable bounded set $A\subset\R^n$, where $|A|$ stands for the Lebesgue measure of $A$.
This is the same as saying that $g(\ve^{-1}x)$ converges, in the
duality with $L^\infty$ and compactly supported functions, to the constant $M(g)$.
Also,
if $A_t:=\{x\in\R^n\,:\, t^{-1}x\in A\}$ for $t>0$ and $|A|\ne0$, \eqref{e1.2} may be written as
\begin{equation}\label{e1.3}
\lim_{t\to\infty}\frac1{t^n|A|}\int_{A_t}g(x)\,dx=M(g).
\end{equation}
\end{definition}
We will also use the notation $\medint_{\R^n}g\,dx$ for $M(g)$.

\medskip
 As usual, we denote by $\BUC(\R^n)$ the
space of the bounded uniformly continuous real-valued functions in
$\R^n$.

We recall now the definition of algebra with mean value introduced in \cite{ZK}. 

\begin{definition}\label{D:5} Let $\AA$ be a linear subspace of $\BUC(\R^n)$.
We say that $\AA$ is an {\em algebra with mean value} (or {\em
algebra w.m.v.}, in short), if the following conditions are
satisfied:
\begin{enumerate}
\item[(A)] If $f$ and $g$ belong to $\AA$, then the product $fg$ belongs to $\AA$.
\item[(B)] $\AA$ is invariant under the translations $\tau_y:\R^n\to\R^n$, $x\mapsto x+y$, $y\in\R^n$, that is, if $f\in\AA$, then $\tau_y f\in\AA$, for all $y\in\R^n$, where $\tau_yf:=f\circ \tau_y$, $f\in\AA$.
\item[(C)] Any $f\in\AA$ possesses a mean value.
\item[(D)] $\AA$ is closed in $\BUC(\R^n)$ and contains the unity, i.e., the function $e(x):=1$ for $x\in\R^n$.
\end{enumerate}
\end{definition}

\begin{remark}\label{R:awmv} Condition (D) was not originally in \cite{ZK} but  its inclusion does not change matters since any algebra satisfying (A), (B) and (C) can be extended  to
an algebra satisfying (A)--(D) in an unique way modulo isomorphisms.
\end{remark}

For the development of the homogenization theory in algebras 
with mean value, as it is done in \cite{ZK,JKO} (see also \cite{CG,AFS}),
in similarity with the case of almost periodic functions, one
introduces, for $1\leq p<\infty$, the space  $\BB^p$ as the abstract
completion of the algebra 
$\AA$ with respect to the Besicovitch seminorm
$$
|f|_p:= \left(\Medint_{\R^n}|f|^p\,dx\right)^{1/p}
$$
Both the action of translations and the mean value
extend by continuity to $\BB^p$, and we will keep using the notation 
$\tau_y f$ and $M(f)$ even when $f\in\BB^p$. Furthermore,
for $p>1$ the product in the algebra extends to a bilinear operator from $\BB^p\times\BB^q$ into $\BB^1$,
with $q$ equal to the dual exponent of $p$, satisfying
$$
|fg|_1\leq |f|_p|g|_q.
$$
In particular, the operator $M(fg)$ provides a nonnegative definite
bilinear form on $\BB^2$.

Since there is an obvious inclusion between elements of this family of spaces,
we may define the space $\BB^\infty$ as follows:
$$
\BB^\infty=\{f\in \bigcap_{1\leq p<\infty}\BB^p\,:\,\sup_{1\le p<\infty}|f|_p<\infty\},
$$
We endow $\BB^\infty$  with the (semi)norm
$$
|f|_\infty:=\sup_{1\le p<\infty}|f|_p.
$$
Obviously the corresponding quotient spaces for all these spaces
(with respect to the null space of the seminorms) are Banach spaces,
and in the case $p=2$ we obtain a Hilbert space. We denote by
$\overset{\BB^p}{=}$, the equivalence relation given by the equality
in the sense of the $\BB^p$ semi-norm. We will keep the notation $\BB^p$ also
for the corresponding quotient spaces.

\begin{remark}\label{R:0.1} A classical argument going back to Besicovitch~\cite{B} (see also \cite{JKO}, p.239) shows that the elements of $\BB^p$ can be represented
by functions in $L_{\loc}^p(\R^n)$, $1\le p<\infty$. 
\end{remark}

We next recall a result established in \cite{AFS} which provides a connection between algebras with mean value and the algebra $C(\CK)$ of continuous functions on a certain compact (Hausdorff) topological space. We state here only the parts of the corresponding result in \cite{AFS} that will be used in this paper. 

\begin{theorem}[cf.\ \cite{AFS}]\label{T1} 
For an algebra $\AA$, we have:
\begin{enumerate}
\item[(i)] There exist a compact space
${\mathcal K}$ and an isometric isomorphism $i$ identifying $\AA$ with the
algebra $C({\mathcal K})$ of continuous functions on ${\mathcal K}$.
\item[(ii)] The translations $\tau_y:\R^n\to\R^n$, $\tau_y x=x+y$,
induce a family of homeomorphisms $T(y):{\mathcal K}\to{\mathcal K}$, $y\in\R^n$, satisfying the group properties $T(0)=I$, $T(x+y)=T(x)\circ T(y)$, such that the mapping $T:\R^n\X\CK\to\CK$, $T(y,z):=T(y)z$, is continuous.
\item[(iii)] The mean value on $\AA$ extends to  a Radon probability measure ${\mathfrak m}$ on 
${\mathcal K}$ defined by
$$
\int_{\mathcal K}i(f)\,d\mathfrak m :=\Medint_{\R^n}f\,dx, \qquad f\in\AA,
$$
which is invariant by the
group of homeomorphisms $T(y):\CK\to\CK$, $y\in\R^n$, that is, $\mm(T(y) E)=\mm(E)$ for all Borel sets $E\subset\CK$.
\item[(iv)]For $1\le p\le \infty$, the
Besicovitch space $\BB^p\big/\overset{\BB^p}{=}$ is
isometrically isomorphic to $L^p({\mathcal K}, {\mathfrak m})$.
\end{enumerate}
\end{theorem}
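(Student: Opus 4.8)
The plan is to derive the whole statement from Gelfand theory by transport of structure: realize $\mathcal K$ as the Gelfand spectrum of (the complexification of) $\AA$ and $i$ as the Gelfand transform (part (i)); push the translation group forward to a group of homeomorphisms of $\mathcal K$ (part (ii)); push the mean value forward to a Radon measure via the Riesz representation theorem (part (iii)); and recognize the Besicovitch seminorms as $L^p(\mm)$-norms on the dense subalgebra $i(\AA)\subset C(\mathcal K)$ (part (iv)). For (i), by conditions (A)--(D) the space $\AA$ with the sup norm is a commutative unital Banach algebra; after complexifying, $\AA_\C:=\AA+i\AA$ is a commutative unital $C^*$-subalgebra of $\BUC(\R^n;\C)$ (it is uniformly closed, closed under products, and closed under complex conjugation). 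Let $\mathcal K$ be its spectrum, i.e.\ the set of nonzero multiplicative linear functionals with the weak-$*$ topology; $\mathcal K$ is a weak-$*$ closed subset of the closed unit ball of $\AA_\C^*$, hence compact Hausdorff by the Banach--Alaoglu theorem. Setting $i(f)(\chi):=\chi(f)$, the Gelfand--Naimark theorem gives that $i$ is an isometric $*$-isomorphism of $\AA_\C$ onto $C(\mathcal K;\C)$, and restricting to the real (self-adjoint) part identifies $\AA$ isometrically with $C(\mathcal K)$; alternatively one invokes the spectral-radius formula together with the Stone--Weierstrass theorem directly on $\AA$.

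For (ii), each translation $\tau_y$ is an isometric unital algebra automorphism of $\AA$ with inverse $\tau_{-y}$, hence by (i) corresponds to a unique homeomorphism $T(y)$ of $\mathcal K$, namely the adjoint action on characters $T(y)\chi:=\chi\circ\tau_y$, and $i(\tau_y f)=i(f)\circ T(y)$ for every $f\in\AA$. The identities $\tau_0=\mathrm{id}$ and $\tau_{x+y}=\tau_x\circ\tau_y$ translate at once into $T(0)=I$ and $T(x+y)=T(x)\circ T(y)$ (the order is immaterial since $\R^n$ is abelian). For the joint continuity of $T:\R^n\times\mathcal K\to\mathcal K$, one uses that each $f\in\AA\subset\BUC(\R^n)$ is \emph{uniformly} continuous, so $y\mapsto\tau_y f$ is norm-continuous from $\R^n$ into $\AA$, whence $y\mapsto i(\tau_y f)$ is continuous into $C(\mathcal K)$; then for nets $y_\a\to y$ in $\R^n$ and $z_\a\to z$ in $\mathcal K$ one has $i(f)(T(y_\a)z_\a)=i(\tau_{y_\a}f)(z_\a)\to i(\tau_y f)(z)=i(f)(T(y)z)$ for every $f\in\AA$, and since the topology of $\mathcal K$ is the initial topology for the family $\{i(f):f\in\AA\}$, this forces $T(y_\a)z_\a\to T(y)z$. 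I expect this joint-continuity step — and in particular the need to argue with nets rather than sequences, since $\mathcal K$ need not be metrizable — to be the only genuinely delicate point in the argument.

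For (iii), the mean value $M$ is, by Definition~\ref{D:3}, a positive ($f\ge 0\Rightarrow M(f)\ge 0$) and unital ($M(e)=1$) linear functional on $\AA$, so $i(f)\mapsto M(f)$ defines a positive normalized linear functional on $C(\mathcal K)$; by the Riesz representation theorem there is a unique Radon probability measure $\mm$ on $\mathcal K$ with $\int_{\mathcal K}i(f)\,d\mm=M(f)$ for all $f\in\AA$. Translation invariance of the mean value, $M(\tau_y f)=M(f)$ — which follows from the elementary observation that $\int_{B_R}f(\cdot+y)\,dx$ and $\int_{B_R}f\,dx$ differ by a quantity of order $R^{n-1}$ while $|B_R|$ is of order $R^n$ — then says that the pushforward $T(y)_*\mm$ represents the same functional on $C(\mathcal K)$ as $\mm$; by the uniqueness clause of the Riesz theorem, $\mm=T(y)_*\mm$, i.e.\ $\mm(T(y)E)=\mm(E)$ for every Borel $E\subset\mathcal K$.

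For (iv), given $f\in\AA$ and $1\le p<\infty$ we have $|f|_p^p=M(|f|^p)=\int_{\mathcal K}|i(f)|^p\,d\mm$, so $i$ is an isometry of $(\AA,|\cdot|_p)$ into $L^p(\mathcal K,\mm)$; since $L^p(\mathcal K,\mm)$ is complete and $C(\mathcal K)=i(\AA)$ is dense in it ($\mm$ being Radon on a compact Hausdorff space), $i$ extends uniquely to an isometric isomorphism of the completion $\BB^p$, hence of the quotient $\BB^p\big/\overset{\BB^p}{=}$, onto $L^p(\mathcal K,\mm)$. For $p=\infty$: since $\mm$ is a probability measure, $p\mapsto|f|_p=\|i(f)\|_{L^p(\mm)}$ is nondecreasing with supremum $\|i(f)\|_{L^\infty(\mm)}$, and an element of $\bigcap_p\BB^p$ has uniformly bounded Besicovitch seminorms precisely when the corresponding element of $\bigcap_p L^p(\mathcal K,\mm)$ lies in $L^\infty(\mathcal K,\mm)$; this identifies $\BB^\infty\big/\overset{\BB^\infty}{=}$ isometrically with $L^\infty(\mathcal K,\mm)$, completing the proof.
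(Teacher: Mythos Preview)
Your proof is correct and is the standard Gelfand-theoretic argument. Note, however, that the paper itself does not prove this theorem: it merely recalls it from \cite{AFS} (see the sentence preceding the statement, ``We next recall a result established in \cite{AFS}\ldots''), so there is no in-paper proof to compare against. Your approach---realizing $\mathcal K$ as the Gelfand spectrum of the complexified algebra, transporting the translation action to the spectrum, invoking Riesz for the measure, and identifying the Besicovitch seminorms with $L^p(\mm)$-norms---is precisely the route one expects and is essentially what is done in \cite{AFS}; the joint-continuity step in (ii) via uniform continuity of elements of $\BUC(\R^n)$ is handled correctly, and your treatment of the $p=\infty$ case through the monotonicity of $p\mapsto\|\cdot\|_{L^p(\mm)}$ on a probability space is the right way to match the paper's somewhat nonstandard definition of $\BB^\infty$.
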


A function $f\in\BB^2$ is  said to be {\em invariant} if
$\tau_y f\overset{\BB^2}{=} f$, for all $y\in\R^n$. More clearly,
$f\in\BB^2$ is invariant if
\begin{equation}\label{e1.INV}
M\bigl(|\tau_yf-f|^2\bigr)=0,\qquad \text{for all $y\in\R^n$}.
\end{equation}
The concept of ergodic algebra is then introduced as follows.

\begin{definition}\label{D:6} An  algebra w.m.v.\ $\AA$  is called an {\em ergodic algebra} if any invariant function
$f$ belonging to the corresponding space $\BB^2$ is equivalent (in $\BB^2$) to a constant.
\end{definition}

A very useful alternative definition of ergodic algebra is also given in \cite{JKO}, p.~247,
and shown therein to be equivalent to
Definition~\ref{D:6}. 
We state that as the following lemma.

\begin{lemma}[cf.\ \cite{JKO}]\label{L:ergo} Let $\AA\subset\BUC(\R^n)$
be an algebra w.m.v.. Then $\AA$ is ergodic
if and only if
\begin{equation}\label{e.Lergo}
\lim_{t\to\infty}M_y\left(\bigl|\frac{1}{|B(0;t)|}
\int_{B(0;t)}f(x+y)\,dx-M(f)\bigr|^2\right)=0
\qquad\forall f\in\AA,
\end{equation}
where $M_y$ denotes the mean with respect to the variable $y$.
\end{lemma}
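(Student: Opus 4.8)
The plan is to prove the equivalence in Lemma~\ref{L:ergo} by relating both the invariance condition in Definition~\ref{D:6} and the Birkhoff-type averaging condition \eqref{e.Lergo} to the action of the translation group $T(y)$ on $L^2(\KK,\mm)$, using Theorem~\ref{T1}. Via part~(iv) of that theorem we identify $\BB^2$ with $L^2(\KK,\mm)$ and the translations $\tau_y$ with the unitary operators $U(y):L^2(\KK,\mm)\to L^2(\KK,\mm)$ induced by $T(y)$; the continuity of $T$ in part~(ii) together with the invariance of $\mm$ in part~(iii) makes $\{U(y)\}_{y\in\R^n}$ a strongly continuous unitary representation of $\R^n$. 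An invariant function in the sense of \eqref{e1.INV} is then precisely an element $F\in L^2(\KK,\mm)$ fixed by every $U(y)$, and $\AA$ is ergodic iff the only such $F$ are the constants.

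First I would establish the ``only if'' direction. Assume $\AA$ is ergodic and fix $f\in\AA$, with $F=i(f)\in C(\KK)$ and $\bar f=M(f)=\int_\KK F\,d\mm$. Consider the ball averages $F_t:=\frac{1}{|B(0;t)|}\int_{B(0;t)}U(y)F\,dy\in L^2(\KK,\mm)$, which are well defined as Bochner integrals by strong continuity and are uniformly bounded in $L^2$ (indeed in $L^\infty$) by $\|F\|_\infty$. The key step is a standard von Neumann mean ergodic argument: one shows that $F_t$ converges in $L^2(\KK,\mm)$ as $t\to\infty$ to the orthogonal projection $PF$ of $F$ onto the subspace of $U(y)$-invariant functions. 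Since $\AA$ is ergodic this subspace is the constants, so $PF=\bar f$. Translating back through the isometry of part~(iv), $L^2$-convergence $F_t\to\bar f$ in $L^2(\KK,\mm)$ is exactly the statement that $M_y\big(\big|\frac{1}{|B(0;t)|}\int_{B(0;t)}f(x+y)\,dx-M(f)\big|^2\big)\to0$, i.e. \eqref{e.Lergo}. Some care is needed because \eqref{e.Lergo} involves the mean value $M_y$ of a function of $y$ built from $f$, not an integral over $\KK$; but that function of $y$ is itself (the realization of) an element of $\BB^2$, namely $y\mapsto$ the value at $T(y)z$ of the average, and its Besicovitch seminorm coincides with the $L^2(\KK,\mm)$ norm of $F_t-\bar f$ — this identification, justified by Fubini on $\R^n\times\KK$ and the $T$-invariance of $\mm$, is the one slightly technical point.

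For the ``if'' direction, assume \eqref{e.Lergo} holds for all $f\in\AA$ and let $g\in\BB^2$ be invariant; we must show $g\overset{\BB^2}{=}M(g)$. Since $\AA$ is dense in $\BB^2$, approximate: it suffices to control $|g-M(g)|_2$. For $f\in\AA$ we have, using translation-invariance of the seminorm and the triangle inequality, $|g-M(g)|_2 = |\,\frac{1}{|B(0;t)|}\int_{B(0;t)}\tau_yg\,dy - M(g)\,|_2 \le |\,\frac{1}{|B(0;t)|}\int_{B(0;t)}\tau_y(g-f)\,dy\,|_2 + |\,\frac{1}{|B(0;t)|}\int_{B(0;t)}\tau_yf\,dy - M(f)\,|_2 + |M(f)-M(g)|$, where in the first equality we used that $\tau_yg\overset{\BB^2}{=}g$ for all $y$. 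The first term is bounded by $|g-f|_2$ (averages of translates do not increase the seminorm), the third by $|f-g|_1\le|f-g|_2$, and the middle term tends to $0$ as $t\to\infty$ by \eqref{e.Lergo}. Hence $|g-M(g)|_2\le 2|g-f|_2$ for every $f\in\AA$, and density gives $|g-M(g)|_2=0$. I expect the main obstacle to be purely bookkeeping rather than conceptual: namely making rigorous the passage between the three equivalent pictures — the Besicovitch seminorm of explicit functions on $\R^n$, Bochner integrals of the unitary group on $L^2(\KK,\mm)$, and the mean $M_y$ appearing in \eqref{e.Lergo} — and in particular justifying that the $y$-dependent ball average of $f$ is genuinely an element of $\BB^2$ whose seminorm is computed by the von Neumann limit; everything else is the classical mean ergodic theorem for a strongly continuous unitary $\R^n$-action, which I would invoke rather than reprove.
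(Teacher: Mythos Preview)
The paper does not actually give its own proof of Lemma~\ref{L:ergo}; it is stated as a known result from \cite{JKO}, p.~247, and simply cited. So there is no in-paper argument to compare against.

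Your proposal is correct and follows the expected route. The ``only if'' direction is exactly the von Neumann mean ergodic theorem for the strongly continuous unitary $\R^n$-action $U(y)$ on $L^2(\KK,\mm)$, applied over the F{\o}lner sequence of balls; ergodicity of $\AA$ means the invariant subspace is the constants, so the ball averages converge in $L^2(\KK,\mm)$ to $M(f)$, which, under the isometry of Theorem~\ref{T1}(iv), is precisely \eqref{e.Lergo}. Your remark that the function $y\mapsto\frac{1}{|B(0;t)|}\int_{B(0;t)}f(x+y)\,dx$ lies in $\AA$ (as a uniform limit of Riemann sums of translates) and that its $\BB^2$-seminorm equals the $L^2(\KK,\mm)$-norm of the Bochner average is the right bookkeeping step. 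The ``if'' direction is a clean density argument: for invariant $g\in\BB^2$ you replace $g$ by its own ball average of translates (which equals $g$ in $\BB^2$), split off an approximating $f\in\AA$, and use \eqref{e.Lergo} to kill the middle term; the remaining terms are controlled by $|f-g|_2$. This is standard and matches what one finds in \cite{JKO}.
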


\begin{remark} \label{R:S1} As examples of ergodic algebras, besides the trivial one of the periodic functions,  the already mentioned example of the almost periodic functions, $\AP(\R^n)$, and the larger space of the Fourier-Stieltjes transforms, $\FS(\R^n)$, which will be commented in Section~\ref{S:2}, an even larger ergodic algebra, including all the just mentioned ones, is the space of the weakly almost periodic functions $\WAP(\R^n)$, introduced by Eberlein, in \cite{Eb}. This space is defined as the subspace of the functions in $\varphi\in C(\R^n)$, whose family of translates 
$\varphi(\cdot+\l)$, $\l\in\R^n$, is weakly pre-compact in $C(\R^n)$. Since weak convergence in $C(\R^n)$ is equivalent to pointwise convergence in $C(\hat\KK)$, where $\hat\KK$ is the  \v Cech compactification of $\R^n$, generated by the algebra $C(\R^n)$,  we easily see that $\WAP(\R^n)$ is, indeed, an algebra, invariant by translations. Also, the fact that 
$\AP(\R^n)\subset\WAP(\R^n)$, follows from  a well known result of Bochner (see, e.g., \cite{Bo}), establishing the property of strong pre-compactness in $C(\R^n)$ of the translates, as equivalent to the definition of almost periodic functions.  The theory developed in \cite{Eb} shows, in fact,  that $\WAP(\R^n)$ is an ergodic algebra including $\FS(\R^n)$. The latter inclusion is strict according to a result of Rudin in \cite{Ru}.  
\end{remark}

The following lemma from \cite{FS1}  will be used subsequently, in our discussion about the measure of level sets of the elements of an algebra w.m.v. We give an outline of its proof for the reader's convenience. 

\begin{lemma}[cf.\ \cite{FS1}]\label{L:FS1} Let $\AA(\R^n)$ be an algebra w.m.v.\ in $\R^n$ and $\xi:\R^n\to\R^n$ be a vector field such that $\xi_i\in \AA(\R^n)\cap \Lip(\R^n)$,  for
 $i=1,\cdots,n$. 
Let $\Phi:\R^n\X\R\to\R^n$ be the flow generated by $\xi$, that is,  for any $(x_0,t_0)\in\R^n\X\R$, $\Phi(x_0,t_0)=x(t_0;x_0)$, where $x(t;x_0)$ is the solution of 
\begin{equation}\label{eLFS1}
\begin{cases}
\dfrac{dx}{dt}=\xi(x),\\
x(0)=x_0.
\end{cases}
\end{equation}
Then, for any $g\in\AA(\R^n)$ and $t\in\R$, $g\circ\Phi_t\in\AA(\R^n)$, with $\Phi_t(x)=\Phi(x,t)$, $\Phi_t$ extends to a homeomorphism $\Phi_t:\KK\to\KK$, and $\Phi$ extends to a continuous mapping  $\Phi:\KK\X\R\to\KK$.
\end{lemma}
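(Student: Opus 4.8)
The plan is to exploit the dictionary between $\AA(\R^n)$ and $C(\KK)$ provided by Theorem~\ref{T1}, and to transport the flow $\Phi_t$ of $\xi$ to $\KK$ by first establishing the two invariance-type facts about the one-parameter family $\{\Phi_t\}$ acting on $\AA(\R^n)$, namely that $g\circ\Phi_t\in\AA(\R^n)$ for every $g\in\AA(\R^n)$ and every $t$, and that this action is compatible with the translation structure. The key observation is that, since $\xi_i\in\AA(\R^n)\cap\Lip(\R^n)$, the vector field $\xi$ is globally Lipschitz and bounded, hence the flow $\Phi_t$ is globally defined for all $t\in\R$, is a one-parameter group of diffeomorphisms of $\R^n$ ($\Phi_{t+s}=\Phi_t\circ\Phi_s$, $\Phi_0=\mathrm{id}$), and each $\Phi_t$ is bi-Lipschitz with Lipschitz constants controlled by $e^{|t|\Lip(\xi)}$. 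Consequently, if $g\in\BUC(\R^n)$ then $g\circ\Phi_t\in\BUC(\R^n)$.

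First I would show $g\circ\Phi_t\in\AA(\R^n)$ for $g\in\AA(\R^n)$. The natural route is to realize $g\circ\Phi_t$ as a limit, uniform in $x$, of objects manifestly in $\AA(\R^n)$. One option: approximate $\Phi_t$ by its Picard iterates or, more efficiently, by the $m$-fold composition of the time-$(t/m)$ map and the latter, via the time-$(t/m)$ map's expansion, by translations whose increments are built from $\xi$; since $\AA(\R^n)$ is a translation-invariant closed algebra, finite sums, products and compositions-with-translations of its elements stay inside, and the closedness in $\BUC$ lets one pass to the uniform limit. A cleaner alternative, which I would prefer, is to fix $g\in\AA(\R^n)$ and consider the function $v(x,t):=g(\Phi_t(x))$; it solves the linear transport equation $\partial_t v=\xi\cdot\nabla v$ with $v(\cdot,0)=g$. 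Differentiating $\|\tau_y v(\cdot,t)-v(\cdot,t)\|$-type quantities and exploiting the group property reduces matters to showing the orbit map $t\mapsto g\circ\Phi_t$ is continuous into $\BUC(\R^n)$ together with the fact that $\AA(\R^n)$ is preserved under $x\mapsto g(x+h\xi(x))+o(h)$ as $h\to0$; the semigroup/Trotter-type formula $g\circ\Phi_t=\lim_m (g\mapsto g\circ(\mathrm{id}+\tfrac tm\xi))^{\circ m}$ then does it, because each single step sends $\AA(\R^n)$ to $\AA(\R^n)$ (composition of an $\AA$-function with a perturbation of the identity by an $\AA$-vector field — this itself needs the algebra structure plus a uniform approximation by polynomials/Taylor expansion in the spirit of Lemma~\ref{L:FS1}'s source \cite{FS1}).

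Next, granting $g\circ\Phi_t\in\AA(\R^n)=C(\KK)$, I would define $\Phi_t:\KK\to\KK$ as the homeomorphism induced by the algebra automorphism $g\mapsto g\circ\Phi_t$ of $C(\KK)$ (a unital $*$-algebra isomorphism of $C(\KK)$ is exactly composition with a homeomorphism of $\KK$, by Gelfand duality / the fact that $\KK$ is the maximal ideal space). The group law $\Phi_{t+s}=\Phi_t\circ\Phi_s$ on $\R^n$ dualizes to $\Phi_{t+s}=\Phi_s\circ\Phi_t$ on $\KK$ (or the reverse, depending on the variance convention), and $\Phi_0=I$; invertibility of $\Phi_t$ on $\KK$ with inverse $\Phi_{-t}$ gives the homeomorphism property. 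Joint continuity of $\Phi:\KK\times\R\to\KK$ then follows by the standard argument: it suffices to check that $(z,t)\mapsto g(\Phi_t(z))$ is continuous on $\KK\times\R$ for each $g\in C(\KK)$, and this reduces, via the density of $\AA(\R^n)$-evaluations and an $\ve/3$ argument, to (a) continuity of $t\mapsto g\circ\Phi_t$ in $\BUC(\R^n)=C(\KK)$, which holds because $\Phi_t\to\Phi_{t_0}$ uniformly on compact sets and $g$ is uniformly continuous, combined with uniform boundedness of $\|g\circ\Phi_t\|_\infty$, and (b) continuity of each $\Phi_t:\KK\to\KK$, already established. One should mimic here the proof of part (ii) of Theorem~\ref{T1}, where the analogous statement for the translation group $T(y)$ is obtained.

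The main obstacle is the very first step — showing $g\circ\Phi_t\in\AA(\R^n)$ — and more precisely the stability of $\AA(\R^n)$ under composition with the near-identity maps $x\mapsto x+h\xi(x)$. The issue is that $\AA(\R^n)$ is an algebra closed under translations by \emph{constant} vectors, but $\Phi_t$ translates each point by a \emph{variable} (state-dependent) amount $\xi$-driven, and a priori there is no reason a composition $g(x+\xi(x))$ lies in $\AA(\R^n)$ just because $g,\xi_i\in\AA(\R^n)$. The resolution must use that $\xi$ is Lipschitz and $g$ is uniformly continuous so that one can uniformly approximate $g$ by trigonometric-type polynomials if working in the almost periodic model, or more robustly use that $\AA(\R^n)$ is a closed subalgebra of $\BUC(\R^n)$ stable under $C^1$ changes of variable generated by $\AA$-vector fields — a fact whose proof in \cite{FS1} I would invoke or reproduce via the uniform convergence of the Euler scheme $\Phi_t(x)=\lim_m\bigl(\mathrm{id}+\tfrac tm\xi\bigr)^{\circ m}(x)$, noting that each Euler step takes the coordinate functions into $\AA(\R^n)$ (they are $\AA$-polynomial expressions in the previous coordinates and $\xi$), so $g$ composed with the $m$-step map lies in $\AA(\R^n)$, and the uniform-on-$\R^n$ convergence (Gr\"onwall, with the bound $e^{|t|\Lip(\xi)}$) together with the closedness of $\AA(\R^n)$ in $\BUC(\R^n)$ concludes. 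Once this is in hand, everything else is the routine Gelfand-duality bookkeeping sketched above.
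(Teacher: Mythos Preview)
Your overall strategy---approximate the flow by iterates built from near-identity maps $x\mapsto x+h\xi(x)$, show each iterate preserves $\AA$, pass to the uniform limit using closedness of $\AA$ in $\BUC$, then lift to $\KK$---is on the right track and close to the paper's argument, which uses the Picard iterates $\Phi^{j+1}(x,t)=x+\int_0^t\xi(\Phi^j(x,s))\,ds$ in place of Euler steps. But you correctly pinpoint, and then fail to resolve, the one nontrivial ingredient: why is $g(x+\zeta(x))\in\AA(\R^n)$ whenever $g\in\AA(\R^n)$ and $\zeta\in\AA(\R^n;\R^n)$? Your proposed resolution is circular: you appeal to the Euler scheme, but each Euler step \emph{is} a map of exactly this form, so you are assuming the very fact you need. (The remark that ``each Euler step takes the coordinate functions into $\AA(\R^n)$'' does not help either: the coordinate functions $x_i$ are unbounded and never lie in $\AA(\R^n)$, so one cannot iterate at the level of coordinates.) The trigonometric-polynomial argument works only for $\AA=\AP(\R^n)$, as you yourself note, and invoking \cite{FS1} is invoking the lemma itself.

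The paper's resolution of this step is a one-liner once seen: by Theorem~\ref{T1}(ii), the translation action extends to a \emph{continuous} map $T:\R^n\times\KK\to\KK$. Viewing $\zeta\in C(\KK;\R^n)$ and $\varphi\in C(\KK)$, the function $z\mapsto\varphi\bigl(T(\zeta(z),z)\bigr)$ is a composition of continuous maps $\KK\to\R^n\times\KK\to\KK\to\R$, hence lies in $C(\KK)=\AA(\R^n)$; this is exactly $\varphi(\cdot+\zeta(\cdot))$. With this in hand, either your Euler route or the paper's Picard route goes through. The paper runs the induction on the Picard iterates directly at the level of $\KK$ (showing each $\Phi^j$ extends to a continuous map $\KK\times[-T,T]\to\KK$, the integral in the recursion being handled as a uniform limit of Riemann sums in $\AA$), thereby obtaining the joint continuity of $\Phi:\KK\times\R\to\KK$ in the same stroke and bypassing the separate Gelfand-duality step you outline for the extension.
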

  
\begin{proof} The assertion is equivalent to saying that, for any $\varphi\in\AA(\R^n)$, $\varphi\circ \Phi:\R^n\X[-T,T]\to\R$ extends, in a unique way, to a function $\varphi\circ\Phi\in C(\KK\X[-T,T])$, for any $T>0$.  First, we claim that, given any $\z\in\AA(\R^n;\R^n)$, we have  that $\varphi(x+\z(x))\in \AA(\R^n)$, or, equivalently, that $\varphi(\cdot+\z(\cdot))\in C(\KK)$, if we view $\z$ and $\varphi$ as extended to functions in $C(\KK;\R^n)$ and $C(\KK)$, respectively. Indeed, the claim is a direct application of Theorem~\ref{T1}, since, viewed as a function on $\KK$, $\varphi(\cdot+\z(\cdot))=\varphi\circ T(\z(\cdot),\cdot)$, where $T:\R^n\X\KK\to\R^n$ is the continuous mapping extending the translations $\R^n\X\R^n\to\R^n$, $(y,x)\mapsto x+y$. 

Now,  by the group property $\Phi(x,t+s)=\Phi(\Phi(x,t),s)$, it suffices to prove that $\Phi$ extends to a continuous mapping $\KK\X[-T,T]\to\KK$, for $T>0$ as small as we wish. 
Thus, we begin by recalling that $\Phi$ satisfies
\begin{equation}\label{eFS1.1}
\Phi(x,t)=x+\int_0^t \xi(\Phi(x,s))\,ds,
\end{equation}
and for, $t\in[-T,T]$, with $T>0$ sufficiently small, we may obtain $\Phi$ using Banach fixed point theorem, as the limit  of a sequence of continuous mappings 
$\Phi^j:\R^n\X[-T,T]\to\R^n$, in the metric space 
$$
\E:=\{\Psi:\R^n\X[-T,T]\to\R^n\,:\, \Psi(x,t)-x\in \BUC(\R^n\X[-T,T];\R^n)\},
$$
 endowed with the metric 
\begin{equation}\label{eFS1.2}
d(\Phi_1,\Phi_2)=\sup_{(x,t)\in\R^n\X[-T,T]}|\Phi_1(x,t)-\Phi_2(x,t)|,
\end{equation}
where $\Phi^j$ is defined recursively by
\begin{equation}\label{eFS1.3}
\Phi^{j+1}(x,t):= x+\int_0^t\xi(\Phi^j(x,s))\,ds,\quad \Phi^0(x,t)=x,\ (x,t)\in\R^n\X[-T,T].
\end{equation}
Therefore, all that remains is to prove by induction that each $\Phi^j$ extends to a continuous mapping $\KK\X[-T,T]\to\KK$, for all $j\in\N$. Indeed, for such mappings, convergence in  the metric \eqref{eFS1.2} clearly implies convergence in the topology generated by the family of pseudo-metrics 
\begin{equation}\label{eFS1.4}
d_\varphi(\Phi_1,\Phi_2)=\sup_{(z,t)\in\KK\X[-T,T]}|\varphi(\Phi_1(z,t))-\varphi(\Phi_2(z,t))|,\quad \varphi\in C(\KK),
\end{equation}
and, so, $\Phi$, as the limit in the topology given by \eqref{eFS1.4}, of the sequence of continuous mappings $\Phi^j:\KK\X[-T,T]\to\KK$, will also be a continuous mapping
 $\KK\X[-T,T]\to\KK$. 
 
  Now, we have already proved that $\Phi^1(x,t)=x+t\xi(x)$ extends to a continuous mapping $\KK\X[-T,T]\to\KK$, since, for any $\varphi\in\AA(\R^n)$, $\varphi(\cdot+t\xi(\cdot))\in\AA(\R^n)$, for each fixed $t\in[-T,T]$, and the uniform continuity of $\varphi$ immediately implies that $\varphi\circ\Phi^1\in C(\KK\X[-T,T])$, where as usual, we use freely the identification $\AA(\R^n)\sim C(\KK)$.
  
  Finally, we have to check that the induction hypothesis that $\Phi^j$ extends to a continuous mapping $\KK\X[-T,T]\to\KK$, implies that $\Phi^{j+1}$ also extends to a continuos mapping $\KK\X[-T,T]\to\KK$. Indeed, we first prove that, if $\Phi^j$   extends to a continuous mapping $\KK\X[-T,T]\to\KK$, then the function
\begin{equation}\label{eFS1.5}
\z(x,t):= \int_0^t\xi(\Phi^j(x,s))\,ds,
\end{equation}
satisfies, $\z(\cdot,t) \in\AA(\R^n;\R^n)$ for each $t\in[-T,T]$, and  $\z\in C([-T,T]; \AA(\R^n;\R^n))$. In fact, the integral from 0 to $t$ defining $\z(\cdot,t)$, may be defined as the limit of Riemann sums, each of which is clearly a function in $\AA(\R^n;\R^n)$, and these Riemman sums converge uniformly in $\R^n$, by the assumption that $\Phi^j$ extends as a continuous mapping $\KK\X[-T,T]\to\KK$, which implies that $\xi\circ\Phi^j\in C([-T,T];\AA(\R^n;\R^n))$. Hence, we have, as asserted, that $\z\in C([-T,T]; \AA(\R^n;\R^n))$. 
In conclusion, since $\z\in C([-T,T];\AA(\R^n;\R^n))$, given any $\varphi\in\AA(\R^n)$, we have $\psi(\cdot,t):=\varphi(\cdot+\z(\cdot,t))\in\AA(\R^n)$, by what has already been proved,
and, by the uniform continuity of $\varphi$, $\psi\in C([-T,T];\AA(\R^n))$, which is the same to say that $\Phi^{j+1}$ extends to a continuous mapping $\KK\X[-T,T]\to\KK$, finishing the proof.

\end{proof}

We close this section  establishing a general result concerning algebras w.m.v.\ which will be used in our investigation on the homogenization of porous medium type equations in the last two sections of the present work.  We first establish the following definition.

\begin{definition}\label{D:sreg}  For  a $C^1$  function $\psi$, belonging to algebra w.m.v.\  $\AA(\R^n)$, such that $\partial_{x_i} \psi \in\AA(\R^n)$, $i=1,\dots,n$,  we say that 
$\a\in\R$ is a {\em strongly regular} value of  $\psi$ if there exists $\d_\a>0$ such  
$|\psi(x)-\a|^2+|\nabla \psi(x)|^2>\d_\a$, for all $x\in\R^n$, where $|\nabla\psi(x)|^2=\sum_{i=1}^n(\partial_{x_i}\psi(x))^2$.
\end{definition}

\begin{lemma}\label{L:zeromeasure}  Let $\AA(\R^n)$ be an algebra w.m.v.\ and $\psi\in\AA(\R^n)$ be such that $\partial_{x_i}\psi, \partial_{x_ix_j}^2\psi  \in
\AA(\R^n)$, $i,j=1,\dots,n$.  
\begin{enumerate}
\item[(i)]  If  $\a\in\R$ is a strongly regular value of $\psi$, then
\begin{equation}\label{ezeromeas}
\mm\left(\{z\in \KK:\,  \psi(z)=\a \}\right)=0,
\end{equation}
where $\KK$ is the compact space given by Theorem~\ref{T1} and $\mm$ is the associated invariant probability measure on $\KK$.  
\item[(ii)]  If
\begin{equation}\label{ezeromeas2}
\mm\left(\{z\in \KK:\, |\nabla \psi(z)|=0 \}\right)=0,
\end{equation}
then \eqref{ezeromeas} holds for all $\a\in\R$. In particular, this is the case if $0$ is a strongly regular value of $\po_{x_k} \psi$, for some $k\in\{1,\cdots,n\}$.
 \end{enumerate}

\end{lemma}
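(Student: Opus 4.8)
The plan is to establish (i) first, since (ii) follows from it by a slicing/scaling argument, and the last sentence of (ii) is then immediate.

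For part (i), the strategy is to transport the level set along the gradient flow and exploit the invariance of $\mm$. Assume $\a$ is a strongly regular value, so $|\psi(x)-\a|^2+|\nabla\psi(x)|^2>\d_\a$ on $\R^n$. The key observation is that on the level set $\{\psi=\a\}$ we must have $|\nabla\psi|^2>\d_\a$, so the vector field $\xi:=\nabla\psi$ is nonvanishing in a neighborhood of the level set; moreover $\xi_i=\po_{x_i}\psi\in\AA(\R^n)$ by hypothesis, and $\xi_i$ is Lipschitz because $\po_{x_ix_j}^2\psi\in\AA(\R^n)\subset\BUC(\R^n)$. Hence Lemma~\ref{L:FS1} applies: the flow $\Phi_t$ of $\xi$ extends to homeomorphisms of $\KK$, and $\Phi:\KK\X\R\to\KK$ is continuous. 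Along this flow, $\frac{d}{dt}\psi(\Phi_t(x))=|\nabla\psi(\Phi_t(x))|^2$; as long as the orbit stays in the region where $|\psi-\a|$ is small (so that $|\nabla\psi|^2>\d_\a$ there), the value $\psi(\Phi_t(x))$ increases at rate at least $\d_\a$. Therefore, viewing everything on $\KK$, for each $z$ with $\psi(z)=\a$ the curve $t\mapsto\psi(\Phi_t(z))$ satisfies $\frac{d}{dt}\psi(\Phi_t(z))\ge\d_\a$ for $t$ in some interval $(-s_0,s_0)$ with $s_0$ depending only on $\d_\a$ and $\|\nabla\psi\|_\infty$ (because it takes time at least of order $1/\|\nabla\psi\|_\infty$ to leave the strip $\{|\psi-\a|<\d_\a^{1/2}/2\}$, say, and inside that strip $|\nabla\psi|^2>\d_\a$). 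Consequently the sets $E_t:=\Phi_t(\{z\in\KK:\psi(z)=\a\})$ are pairwise disjoint for $t$ in a sufficiently short interval: if $\Phi_t(z)=\Phi_{t'}(z')$ with $z,z'$ on the level set and $t<t'$ both small, then $\psi$ evaluated along the orbit through that common point equals $\a$ at two distinct times, contradicting strict monotonicity on the relevant time interval. Since $\mm$ is $\Phi_t$-invariant (the flow is generated by a vector field in the algebra, so each $\Phi_t\in T(\cdot)$'s closure acts measure-preservingly; more directly, $\mm(\Phi_t E)=\mm(E)$ follows from the density of Riemann-sum translations and the $T(y)$-invariance in Theorem~\ref{T1}(iii)), all the $E_t$ have the same measure $\mu_0:=\mm(\{\psi=\a\})$. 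Disjointness over an interval of positive length then forces $\mu_0=0$, which is \eqref{ezeromeas}.

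For part (ii), suppose \eqref{ezeromeas2} holds, i.e.\ $\mm(\{|\nabla\psi|=0\})=0$. Fix an arbitrary $\a\in\R$ and write $\{z:\psi(z)=\a\}=\bigcup_{k\ge1}\{z:\psi(z)=\a,\ |\nabla\psi(z)|>1/k\}\ \cup\ \{z:\psi(z)=\a,\ |\nabla\psi(z)|=0\}$. The last set has measure zero by \eqref{ezeromeas2}, so it suffices to show $\mm(\{\psi=\a,\ |\nabla\psi|\ge 1/k\})=0$ for each $k$. On this set the pair $(\psi,\nabla\psi)$ satisfies $|\psi-\a|^2+|\nabla\psi|^2\ge 1/k^2$, but $\a$ need not be strongly regular globally; nevertheless the same flow argument as in (i) works locally: near any point of this set $|\nabla\psi|>1/(2k)$ on a small $\KK$-neighborhood (by continuity), the flow $\Phi_t$ of $\nabla\psi$ strictly increases $\psi$ at rate $\ge 1/(4k^2)$ there, and covering the compact set $\{\psi=\a,\ |\nabla\psi|\ge1/k\}$ by finitely many such neighborhoods and again using disjointness of short-time flow images together with $\mm$-invariance gives measure zero. (Equivalently, one can just note $\a$ is a strongly regular value of the restriction of $\psi$ to a neighborhood of that set and rerun the proof of (i) verbatim with a local $\d$.) Summing over $k$ yields \eqref{ezeromeas} for every $\a$. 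Finally, if $0$ is a strongly regular value of $\po_{x_k}\psi$, then $|\po_{x_k}\psi(z)|^2+|\nabla\po_{x_k}\psi(z)|^2>\d$ on $\R^n$; in particular $\{z:\po_{x_k}\psi(z)=0\}$ has $\mm$-measure zero by part (i) applied to the algebra element $\po_{x_k}\psi$ (whose first and second derivatives lie in $\AA$ provided third derivatives of $\psi$ do — more carefully, one applies (i) only needing $\po_{x_k}\psi$ and $\nabla\po_{x_k}\psi$ in $\AA$, which is built into the hypothesis that $0$ is strongly regular for $\po_{x_k}\psi$ in the sense of Definition~\ref{D:sreg}). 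Since $\{|\nabla\psi|=0\}\subset\{\po_{x_k}\psi=0\}$, \eqref{ezeromeas2} holds, and we are back in the case just treated.

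The main obstacle I anticipate is making the flow/monotonicity argument precise \emph{on $\KK$} rather than on $\R^n$: one must verify that the inequality $\frac{d}{dt}\psi(\Phi_t(z))\ge\d_\a$ and the resulting injectivity of $t\mapsto\Phi_t(z)$ on a fixed short interval hold uniformly for all $z\in\KK$ on the level set, which requires pulling the pointwise estimate $|\psi-\a|^2+|\nabla\psi|^2>\d_\a$ up to $\KK$ via the identification $\AA(\R^n)\sim C(\KK)$ (so the inequality persists as a statement about continuous functions on $\KK$) and then using continuity of $\Phi:\KK\X\R\to\KK$ from Lemma~\ref{L:FS1}. A secondary technical point is confirming that $\mm(\Phi_t E)=\mm(E)$; this is a Liouville-type statement, cleanest to obtain by approximating $\Phi_t$ by compositions of the translations $T(y)$ (which preserve $\mm$ by Theorem~\ref{T1}(iii)) through the Riemann-sum construction already used in the proof of Lemma~\ref{L:FS1}, together with the continuity of $t\mapsto\Phi_t$ in the relevant topology.
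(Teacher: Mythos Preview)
Your argument for (i) has a genuine gap: the claim that $\mm(\Phi_t E)=\mm(E)$ for the gradient flow $\Phi_t$ of $\xi=\nabla\psi$ is false in general. Theorem~\ref{T1}(iii) gives invariance of $\mm$ only under the \emph{translations} $T(y)$, $y\in\R^n$, and the flow of a nonconstant vector field is not a translation. Your suggested justification via ``Riemann-sum approximation by translations'' does not work: in the proof of Lemma~\ref{L:FS1} the iterate $\Phi^1(x,t)=x+t\xi(x)$ is, on $\KK$, the map $z\mapsto T(t\xi(z))z$, a \emph{state-dependent} shift, not a single $T(y)$; such maps need not preserve $\mm$. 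Concretely, in the periodic case $\KK=\mathbb{T}^1$, $\mm$ normalized Lebesgue, and $\psi(x)=\sin x$, the map $x\mapsto x+t\cos x$ has Jacobian $1-t\sin x\not\equiv 1$, so it is not measure-preserving. More generally, the Liouville condition for the flow of $\xi$ is $\di\xi=0$, which here reads $\Delta\psi=0$ and is not assumed. Since your entire disjoint-translates-of-equal-measure argument rests on this invariance, the proof of (i) collapses, and with it your proof of (ii).

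The paper's proof uses the gradient flow only as a geometric device to confine the $\ve$-sublevel set $A_\ve=\{|\psi-\a|\le\ve\}$ inside a thin tube $\Phi(A\times[-C\ve,C\ve])$, never invoking any invariance of $\mm$ under $\Phi_t$. The actual measure estimate is obtained by returning to $\R^n$ and the definition of $\mm$ as a mean value: one decomposes $A\cap\R^n$ via the Implicit Function Theorem into countably many $C^1$ graphs $S_k^j$ that are uniformly separated (by some $2\s_0>0$) along coordinate directions, so that in each ball $B(0;R)$ at most $O(R/\s_0)$ of the associated tubes $V_k^{j,\ve}$ meet $B(0;R)$, each contributing volume $O(\ve R^{n-1})$. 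Dividing by $|B(0;R)|$ and letting $R\to\infty$ gives $\mm(A)\le C\ve$, hence $\mm(A)=0$. Part (ii) is then handled by intersecting with $\{|\nabla\psi|\ge 1/l\}$ and rerunning the same volume-counting argument with a cutoff in $|\nabla\psi|$; the idea of decomposing over $l$ is the same as yours, but the core estimate again avoids any measure-invariance of the flow.
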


\begin{proof}  We first prove item (i).  By the hypotheses and the properties of the algebras w.m.v., we have that $|\nabla \psi|\in\AA(\R^n)$ and so it extends to a function in 
$C(\KK)$. Since $\a$ is a strongly regular value of $\psi$, the sets
$A=\{z\in\KK\,:\, \psi(z)=\a\}$ and $B=\{ z\in\KK\,:\, |\nabla\psi(z)|=0\}$ are two disjoint compact subsets of $\KK$. Hence, there exists $\d_0>0$ such that 
\begin{equation}\label{eL2.2.1}
A\subset \V:=\{z\in\KK\,:\, |\nabla\psi(z)|>\d_0\}. 
\end{equation}
In particular, given any  $z\in A$, there exists $j\in\{1,\dots,n\}$ such that $|\partial_{x_j}\psi(z)|>\d_0/n$.  

We claim that
$$
A\cap\R^n\subset \bigcup_{j=1}^n\bigcup_{k=1}^\infty S_k^j, 
$$ 
where each $S_k^j$ is the graph of a  $C^1$ function defined on an open subset of the  space of the $n-1$ variables 
$$
(x_1,\dots,x_{j-1},x_{j+1},\dots,x_n).
$$
Moreover,  for each  fixed  $j\in\{1,\dots, n\}$, the sets  $S_k^j$, $k\in\Z$,  are separated from each other along the lines parallel to the $x_j$-axis by  a distance greater than a positive number $2\s_0$, in the sense that,  if  $x_1\in S_{k_1}^j$, $x_2\in S_{k_2}^j$, $k_1\ne k_2$, and $x_1-x_2= se_j$, where $e_j$ is the $j$-th element of the canonical basis,
then $|s|>2\s_0$.  

Indeed, let 
\begin{equation}\label{eAj}
A^j=\{x\in A\cap\R^n\,:\,|\partial_{x_j}\psi(x)|>\frac{\d_0}{n}\}.
\end{equation}
 Clearly $A\cap\R^n=\cup_{j=1}^n A^j$.  By the Implicit  Function Theorem, we have that $A^j$ is the union of a family of connected graphs of $C^1$ functions.  
Moreover, since $\partial_{x_j}\psi$ is uniformly continuous, there exists $\s_0$  such that $|x-y|<\s_0$ implies 
$|\partial_{x_j}\psi(x)-\partial_{x_j}\psi(y)|<\d_0/2n$. Therefore, by \eqref{eAj}, any two points $x,y\in A_j$ lying both in  one  line parallel to the $x_j$-axis must satisfy $|x-y|>2\s_0$.
 In particular, the set of connected graphs in $A^j$ is countable, since for each point in the hyperplane $\{x_j=0\}$ with rational coordinates there corresponds at most a countable number of graphs whose projection in $\{x_j=0\}$ contains that point.
Now, given
a connected graph contained in $A^j$, by Zorn's lemma, we can obtain a maximal family of connected graphs in $A_j$, containing the given graph, whose projections into the hyperplane $\{x_j=0\}$ are disjoint from each other. We call this maximal family $S_1^j$.  We then consider the family of connected graphs $A^j\setminus S_1^j$ and pick up 
a connected graph from it. Again by Zorn's lemma such graph belongs to a maximal family of connected graphs in $A^j\setminus S_1^j$  whose projections into the hyperplane 
$\{x_j=0\}$ are pairwise disjoint. We call this maximal family $S_2^j$. We then consider the family of connected graphs $A^j\setminus (S_1^j\cup S_2^j)$ and, from it, we define a maximal family of connected graphs whose projections in the hyperplane $\{x_j=0\}$ are pairwise disjoint, call this maximal family $S_3^j$, and so on. In this way, relabeling if necessary, we end up decomposing $A^j$ into a disjoint union, $\cup_{k=1}^\infty S_k^j$, of maximal families of connected graphs whose projections in the hyperplane $\{x_j=0\}$ do not intersect each other. Clearly, each such maximal family, $S_k^j$,  maybe be viewed as a graph of a $C^1$ function defined on an open subset  of the space of the variables $x_1,\dots,x_{j-1},x_{j+1},\dots, x_n$.  By the proof it is clear also the assertion concerning the separation of the $S_k^j$, along lines parallel to $e_j$.

Let $\Phi:\R^n\X\R\to\R^n$ be the flow generated by $\nabla \psi$, which, by Lemma~\ref{L:FS1}, may be extended to a continuous mapping $\Phi:\KK\X\R\to\K$. Since $A$ is compact in $\KK$, $\V$ is open in $\KK$, and $A\subset\V$, by continuity, for $\tau_0>0$ sufficiently small, we have $\V(\tau):=\Phi(A\X[-\tau,\tau])\subset\V$, for all $0<\tau\le \tau_0$. 
We decompose $\V_\tau$ as 
$$
\V(\tau)=\bigcup_{j=1}^n\bigcup_{k=1}^\infty V_k^j(\tau),
$$
where
$$
V_k^j(\tau):=\{ x\in\R^n\, :\, x=\Phi(x',s),\ x'\in S_k^j,\ s\in [-\tau,\tau]\}.
$$
Let $\ve_0>0$ be such that the compact sets $A_\ve=\{z\in\KK\,:\,|\psi(z)-\a|\le \ve\}$ satisfy $A_\ve\subset\V$, for $0<\ve<\ve_0$. Such $\ve_0>0$ exists, by compactness arguments,  since 
$$
\bigcap_{0<\ve<1}A_\ve=A.
$$
For $0<\ve<\ve_0$, such that $2\ve/\d_0^2<\tau_0$,  let us define
$$
\phi_\ve(x)=\min\{|\psi(x)-\a|,\ \ve\},\qquad \psi_\ve(x)=1-\ve^{-1}\phi_\ve(x).
$$ 
The non-negative  function  $\psi_\ve$, so defined, is clearly an element of $\AA(\R^n)$, which  is equal 1 on $A$, and whose support is $A_\ve$. We also have that $A_\ve\subset\V(\tau_0)$. Indeed, given any $x_1\in A_\ve$, assume for concreteness that $\psi(x_1)<\a$, and let us consider the curve $\Phi(x_1,t)$, for $t\ge 0$, and the function 
$$
\g(t):=\psi(\Phi(x_1,t)), \ t\ge0.
$$
We have $\g'(t)>\d_0^2$, while $\Phi(x_1,t)\in \V$. So, either $\g(t_0)=\a$, for some $t_0>0$, or $\Phi(x_1,t)$ leaves $\V$ before $\g$ achieves the value $\a$, which is impossible
since $\g$ is increasing for $0<t<t_*$, where $t_*$ is the least time in which $\Phi(x_1,t)$ leaves $\V$, and $A_\ve\subset \V$, so $\Phi(x_1,t)$ could not have left $\V$ without passing through $A=\{\psi=\a\}$.   Also,  if $x_0\in A$ and $x_1=\Phi(x_0,t_0)$, then 
$$
|\psi(x_1)-\a|=|\int_0^{t_0}|\nabla\psi(\Phi(x_0,t))|^2\,dt|\ge \d_0^2 |t_0|.
$$  
Therefore,
$$
A_\ve=\supp\psi_\ve\subset  \V(\frac{2\ve}{\d_0^2})=\bigcup_{j=1}^n\bigcup_{k=1}^\infty V_k^j(\frac{2\ve}{\d_0^2}).
$$

Let us denote $V_k^{j,\ve}:=V_k^j(\frac{2\ve}{\d_0^2})$.  We have 
\begin{equation}\label{eL22.1}
\mm(A)\le \mm(\{z\in\KK\,:\,|\psi_\ve(z)|>\frac12\})\le 2\int_{\KK}\psi_\ve(z)\,d\mm(z)=2\lim_{R\to\infty}\frac{1}{|B(0;R)|}\int_{|x|<R}\psi_\ve(x)\,dx.
\end{equation}
Now, for each $j\in\{1,\dots,n\}$, $\# \{V_k^{j,\ve}\,:\, V_k^{j,\ve}\cap B(0;R)\ne \emptyset\}< \frac{R}{\s_0}$, and clearly
$$
\frac{1}{R^{n-1}}\H^{n-1}\left( B(0;R)\cap S_k^j\right)\le C,
$$
for some $C>0$ depending only on $\psi$. Hence, for any $R>0$, we have
\begin{align*}
\frac{1}{|B(0,R)|}\int_{|x|<R}\psi_\ve(x)\,dx&\le\sum_{j=1}^n\sum_{k\in\Z}\frac{1}{|B(0;R)|}\int_{B(0;R)\cap V_k^{j,\ve}}\psi_\ve(x)\,dx\\
                                                                          &\le \sum_{j=1}^n\frac{R}{\s_0|B(0;R)|}\max_k\int_{B(0;R)\cap V_k^{j,\ve}}\psi_\ve(x)\,dx\\
                                                                          &\le\sum_{j=1}^n \frac{R}{\s_0|B(0;R)|}\max_k\int_{B(0;R)\cap V_k^{j,\ve}}\,dx \\                                                                          
                                                                          &< C\ve,
\end{align*}
again for some $C>0$  depending only on $\psi$. Thus, we get  that $\mm(A) < C\ve$, and, since $\ve>0$ is arbitrary, we arrive at the desired conclusion, ending the proof of (i).  

As for the proof of (ii), by assumption \eqref{ezeromeas2}, we only need to prove that 
$$
\mm\left(\{z\in \KK:\,  \psi(z)=\a,\ |\nabla\psi(z)|>0 \}\right)=0.
$$
But, we may write 
$$
\{z\in \KK:\,  \psi(z)=\a,\ |\nabla\psi(z)|>0 \}=\bigcup_{l=1}^\infty B^l, \qquad  B^l=\{z\in\KK\,:\, \psi(z)=\a,\  |\nabla \psi(z)|\ge \frac1l\}.
$$
Now, we claim that $\mm(B^l)=0$, $l=1,2,\dots$. Indeed, the claim follows by arguments totally similar to those used in the proof of (i).  The only nontrivial adaptation to be made, is that,
 instead of using the function $\psi_\ve$ defined above, we shall now use the function
 $$
 \tilde \psi_\ve(x)=\psi_\ve(x) \theta_l(x),\qquad \theta_l(x):=2l\left( \max\left\{\frac1{2l},\ \min\{\ |\nabla\psi(x)|,\ \frac1{l}\} \right\}-\frac1{2l}\right).
 $$ 
 We then get an inequality similar to \eqref{eL22.1} with $A$ replaced by $B^l$ and $\psi_\ve$ replaced by $\tilde\psi_\ve$.  We also define the analogues of $S_k^j$ and $V_k^j(\tau)$  and the remaining of the proof follows as in the proof of (i). It is also clear that, if $0$ is a strongly regular value of $\po_{x_k}\psi$, for some $k\in\{1,\dots,n\}$, then 
 \eqref{ezeromeas2} holds, by (i).   This finishes the proof.
 
\end{proof}

\begin{remark} \label{R:zeromeasure} In general, for any $\psi$ in an algebra w.m.v.\  $\AA(\R^n)$, we trivially have $\mm(\{z\in\KK\,:\,\psi(z)=\a\})=0$, except for a countable set of $\a$'s. Nevertheless, in general we do not have any other information  about the set of exceptional $\a$'s besides the fact that it is countable; in particular, it could be dense in $\R$.
However, we can use Lemma~\ref{L:zeromeasure} to provide examples where the set of exceptional $\a$'s is empty. For instance, if $\psi_0$ is a $C^2$ periodic function in $\R^n$ for which 0 is a regular value of $\nabla\psi_0$ in the usual sense, then, by the Implicit Function Theorem, we know that  the set $\{x\in\R^n\,:\,|\nabla\psi_0(x)|=0\}$ has $n$-dimensional Lebesgue measure zero, and since $\nabla\psi_0=0$ almost everywhere on the level sets $\{\psi_0=\a\}$, we conclude that all these level sets have $n$-dimensional Lebesgue measure zero.  Now, if $\AA(\R^n)$ is an algebra w.m.v.\ containing  such a  periodic function $\psi_0$ and $\psi_1,\partial_{x_i}\psi_1,\partial^2_{x_i x_j}\psi_1\in\AA(\R^n)$, $i,j=1,\dots,n$, then, for $\d>0$ sufficiently small, we have that $\psi=\psi_0+\d\psi_1$ satisfies the hypotheses of the item (ii) of Lemma~\ref{L:zeromeasure}, and so the conclusion of (ii) holds for $\psi$.  

\end{remark}

\section{Regular algebras w.m.v.\ and the Fourier-Stieltjes space $\FSM(\R^n)$.}\label{S:2}

In this section we introduce the concept of regular algebra w.m.v.\ and recall the definition and some basic  properties of the Fourier-Stieltjes space  introduced by the authors in~\cite{FS}, which is, to the best of our knowledge, the largest known example of a  regular algebra w.m.v..

For any $f\in L^\infty(\R^n)$, let
us denote by $\hat f$ the Fourier transform of $f$ defined as the following distribution
$$
\la \hat f,\phi\ra:=\int f(x)\hat\phi(x)\,dx,\qquad \text{for all $\phi\in C_c^\infty(\R^n)$},
$$
where $\hat \phi$ denotes the usual  Fourier transform of $\phi$, i.e.,
$$
\hat\phi(x)=\frac{1}{(2\pi)^{\frac{n}{2}}}\int \phi(y)e^{-iy\cdot x}\,dx.
$$

Given an algebra w.m.v.\ $\AA$, let us denote by $V(\AA)$ the subspace formed by the elements $f\in\AA$ such that $M(f)=0$, namely,
$$
V(\AA):=\{ f\in\AA\,:\, M(f)=0\}.
$$
Also, let us denote by $Z(\AA)$ the subset of those $f\in\AA$ such that the distribution $\hat f$ has compact support not containing the origin $0$, that is,
$$
Z(\AA):=\{f\in\AA\,:\, \supp(\hat f)\ \text{is compact and}\ 0\notin\supp(\hat f)\}.
$$

We collect in the following lemma some useful properties of the functions in $Z(\AA)$, whose proof is found in \cite{JKO}, p.~246.

\begin{lemma}[cf.\ \cite{JKO}]\label{L:za} Let $\AA$ be an algebra w.m.v.\ in $\R^n$ and $f\in Z(\AA)$. Then:  
\begin{enumerate} 
\item[(i)] There exists $u\in C^\infty(\R^n)\cap Z(\AA)$ such that $\Delta u=f$, where $\Delta$ is the usual Laplace operator in $\R^n$;     $u=f*\zeta$
for certain smooth function $\zeta$, fast decaying together with all its derivatives, satisfying $\hat \zeta\in C_c^\infty(\R^n)$ and $0\notin \supp(\hat \zeta)$.
\item[(ii)] For any Borelian $Q\subset\R^n$, with $|Q|>0$, we have
\begin{equation}\label{e.uni}
\lim_{t\to\infty}\frac{1}{t^n|Q|}\int_{Q_t}f(x+y)\,dx=0, \qquad\text{uniformly in $y\in\R^n$}.
\end{equation}
In particular, $Z(\AA)\subset V(\AA)$. 
\end{enumerate}
\end{lemma}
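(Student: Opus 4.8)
The plan is to prove~(i) by Fourier analysis and then to deduce~(ii) from it by a mollification argument. For~(i): since $K:=\supp(\hat f)$ is compact with $0\notin K$, I would fix a cutoff $\chi\in C_c^\infty(\R^n)$ equal to $1$ on a neighbourhood of $K$ and supported in a compact set avoiding the origin, and define $\zeta$ through its Fourier transform $\hat\zeta(\xi):=-(2\pi)^{-n/2}\chi(\xi)|\xi|^{-2}$. Then $\hat\zeta\in C_c^\infty(\R^n)$ with $0\notin\supp(\hat\zeta)$, so $\zeta$ is of Schwartz class, hence smooth and fast decaying together with all its derivatives. Put $u:=f*\zeta$; then $u\in C^\infty(\R^n)$ with all derivatives bounded (each $\partial^\alpha u=f*\partial^\alpha\zeta$ with $f\in L^\infty$, $\partial^\alpha\zeta\in L^1$) and $u\in\BUC(\R^n)$, since $\|\tau_h u-u\|_\infty\le\|f\|_\infty\|\tau_h\zeta-\zeta\|_{L^1}\to0$ as $h\to0$. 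With the Fourier normalization used above, $\widehat{\Delta u}=(2\pi)^{n/2}\hat f\,\widehat{\Delta\zeta}$ and $\widehat{\Delta\zeta}=-|\xi|^2\hat\zeta=(2\pi)^{-n/2}\chi$, so $\widehat{\Delta u}=\hat f\,\chi$; as this distribution is supported in $\supp(\hat f)$ and coincides with $\hat f$ where $\chi\equiv1$, it equals $\hat f$, i.e.\ $\Delta u=f$, while $\hat u=(2\pi)^{n/2}\hat f\,\hat\zeta$ is supported in the compact set $\supp(\hat f)\cap\supp(\hat\zeta)$, which does not contain $0$. Finally, $u\in\AA$: since $\AA$ is a closed, translation-invariant subspace of $\BUC(\R^n)$, it is stable under convolution with $L^1(\R^n)$, because $u$ is the uniform limit of the truncations $\int_{|y|<R}f(\cdot-y)\zeta(y)\,dy$, each of which — by uniform continuity of $y\mapsto f(\cdot-y)$ into $\BUC(\R^n)$ — is a uniform limit of Riemann sums, i.e.\ of finite linear combinations of translates of $f$, and so lies in $\AA$. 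Thus $u\in C^\infty(\R^n)\cap Z(\AA)$, proving~(i).

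For~(ii): I would argue as in Definition~\ref{D:3}, with $Q$ a bounded Borel set, $|Q|>0$. By~(i) write $f=\Delta u$ with $u\in\AA\cap C^\infty(\R^n)$ bounded, fix a standard mollifier $(\rho_\s)_{\s>0}$, and set $\psi_\s:=\mathbf{1}_Q*\rho_\s\in C_c^\infty(\R^n)$. Since $Q_t=\{x:x/t\in Q\}$, so that $\mathbf{1}_{Q_t}(x)=\mathbf{1}_Q(x/t)$, for every $y\in\R^n$
\[
\frac{1}{t^n|Q|}\int_{Q_t}f(x+y)\,dx
=\frac{1}{t^n|Q|}\int_{\R^n}\bigl(\mathbf{1}_Q(x/t)-\psi_\s(x/t)\bigr)f(x+y)\,dx
+\frac{1}{t^n|Q|}\int_{\R^n}\psi_\s(x/t)\,\Delta u(x+y)\,dx .
\]
The first term is at most $\|f\|_\infty|Q|^{-1}\|\mathbf{1}_Q-\mathbf{1}_Q*\rho_\s\|_{L^1}$ in absolute value, uniformly in $y$ and $t$. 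For the second, two integrations by parts (valid as $\psi_\s(\cdot/t)\in C_c^\infty$ and $u\in C^2$) together with $\Delta_x[\psi_\s(x/t)]=t^{-2}(\Delta\psi_\s)(x/t)$ give
\[
\Bigl|\frac{1}{t^n|Q|}\int_{\R^n}\psi_\s(x/t)\,\Delta u(x+y)\,dx\Bigr|
=\Bigl|\frac{1}{t^{n+2}|Q|}\int_{\R^n}(\Delta\psi_\s)(x/t)\,u(x+y)\,dx\Bigr|
\le\frac{\|u\|_\infty\,\|\Delta\psi_\s\|_{L^1}}{t^{2}|Q|},
\]
again uniformly in $y$. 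Letting $t\to\infty$ and then $\s\to0$ (so $\|\mathbf{1}_Q-\mathbf{1}_Q*\rho_\s\|_{L^1}\to0$) yields $\sup_{y\in\R^n}\bigl|\tfrac1{t^n|Q|}\int_{Q_t}f(x+y)\,dx\bigr|\to0$, which is~\eqref{e.uni}; taking $y=0$ and any fixed bounded $Q$ with $|Q|>0$ gives $M(f)=0$, i.e.\ $Z(\AA)\subset V(\AA)$.

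I expect the difficulty to lie not in any single step but in two pieces of bookkeeping: verifying that the convolution $u=f*\zeta$ stays inside the algebra $\AA$ and not merely in $\BUC(\R^n)$ — this is the one place where closedness and translation invariance of $\AA$ are genuinely used — and obtaining the estimates in~(ii) \emph{uniformly} in the translation parameter $y$ while trading the rough indicator $\mathbf{1}_Q$ for a smooth, compactly supported test function, the mollification splitting above being the device that makes the latter routine.
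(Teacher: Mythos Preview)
Your proof is correct. The paper does not supply its own argument for this lemma but refers to \cite{JKO}, p.~246; your approach---constructing $\zeta$ via a cutoff on the Fourier side so that $\hat\zeta(\xi)=-(2\pi)^{-n/2}\chi(\xi)|\xi|^{-2}$, checking $u=f*\zeta\in\AA$ by approximating the convolution by Riemann sums of translates, and deducing (ii) from $f=\Delta u$ through the mollification/integration-by-parts splitting---is exactly the standard route one finds there.
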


The fundamental result about ergodic algebras, proved by Zhikov and Krivenko~\cite{ZK}, is the following.

\begin{theorem}[cf.\ \cite{ZK}]\label{T:ZK} If $\AA$ is an ergodic algebra, then $Z(\AA)$ is dense in $V(\AA)$ in the topology of the corresponding space $\BB^2$. 
\end{theorem}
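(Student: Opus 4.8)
The plan is to work in the Hilbert space $\BB^2$ and argue by duality. By Theorem~\ref{T1}, $\BB^2$ (modulo $\overset{\BB^2}{=}$) is isometrically isomorphic to $L^2(\KK,\mm)$, with inner product $\la f,g\ra=M(fg)$, translations acting isometrically, and $\AA$ dense in it; since $M$ is continuous on $\BB^2$, one checks readily that $\overline{V(\AA)}^{\BB^2}=\{h\in\BB^2:M(h)=0\}$ is precisely the orthogonal complement of the constants. Because $Z(\AA)\subset V(\AA)$ by Lemma~\ref{L:za}(ii), we have $\overline{Z(\AA)}^{\BB^2}\subset\overline{V(\AA)}^{\BB^2}$, so the whole content of the theorem reduces to the reverse inclusion, which (taking orthogonal complements) amounts to showing that the orthogonal complement of $Z(\AA)$ in $\BB^2$ contains no non-constant element.

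Accordingly, I would fix $h\in\BB^2$ with $M(hg)=0$ for every $g\in Z(\AA)$, and aim to prove that $h$ is equivalent in $\BB^2$ to a constant. The first step is to record that $Z(\AA)$ is rich: for every $f\in\AA$ and every Schwartz function $\z$ with $\hat\z\in C_c^\infty(\R^n)$ and $0\notin\supp\hat\z$, the convolution $f*\z$ belongs to $Z(\AA)$. Indeed, $f*\z$ is a uniform limit of finite linear combinations of translates of $f$ (the rapid decay of $\z$ controls the tails), hence lies in $\AA$ since $\AA$ is a closed translation-invariant subspace of $\BUC(\R^n)$, while $\widehat{f*\z}$ equals, up to a constant, the product $\hat f\,\hat\z$, whose support is contained in the compact set $\supp\hat\z$, which does not contain $0$. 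Therefore $M\bigl(h\,(f*\z)\bigr)=0$. Writing $f*\z=\int_{\R^n}\z(y)\,\tau_{-y}f\,dy$ as a Bochner integral and using the translation invariance of $M$, this becomes
\begin{equation*}
0=\int_{\R^n}\z(y)\,\phi_f(y)\,dy,\qquad\text{where}\quad\phi_f(y):=M\bigl((\tau_y h)\,f\bigr).
\end{equation*}

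Now comes the heart of the matter. The function $\phi_f$ is bounded, because $|\phi_f(y)|\le|\tau_y h|_2\,|f|_2=|h|_2\,|f|_2$, and continuous, because $y\mapsto\tau_y h$ is a strongly continuous group of isometries of $\BB^2$ (this follows from the continuity of $T:\R^n\X\KK\to\KK$ in Theorem~\ref{T1}(ii), first for $h\in C(\KK)$ and then for general $h\in\BB^2$ by density and the uniform bound). The displayed identity then asserts that the tempered distribution $\phi_f$ annihilates every Schwartz function whose Fourier transform lies in $C_c^\infty(\R^n\setminus\{0\})$; taking Fourier transforms, this forces $\supp\widehat{\phi_f}\subset\{0\}$, so $\phi_f$ is a polynomial, and being bounded it must be a constant: $\phi_f(y)\equiv\phi_f(0)=M(hf)$ for all $y\in\R^n$. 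Consequently $M\bigl((\tau_y h-h)\,f\bigr)=0$ for every $f\in\AA$; the density of $\AA$ in $\BB^2$ then gives $\tau_y h\overset{\BB^2}{=}h$ for all $y$, i.e. $h$ is invariant in the sense of \eqref{e1.INV}, and Definition~\ref{D:6} forces $h$ to be equivalent in $\BB^2$ to a constant. Conversely the constants are clearly orthogonal to $Z(\AA)\subset V(\AA)$, so altogether $\overline{Z(\AA)}^{\BB^2}=\{h:M(h)=0\}=\overline{V(\AA)}^{\BB^2}$, which is the assertion.

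The step I expect to be the main obstacle is the passage from ``$h\perp f*\z$ for all admissible $\z$'' to ``$h$ is invariant'': its substance is the Fourier-analytic fact that being orthogonal to the whole family of convolutions $f*\z$ (whose spectra are compactly supported away from the origin) is equivalent to the bounded function $y\mapsto M((\tau_y h)\,f)$ having spectrum concentrated at the origin, hence being constant. The other ingredients — that $f*\z\in Z(\AA)$, the Bochner-integral manipulation, and the strong continuity of the translation group on $\BB^2$ — are routine given Theorem~\ref{T1} and Lemma~\ref{L:za}; note that only the inclusion $Z(\AA)\subset V(\AA)$ of Lemma~\ref{L:za}(ii), together with the definition of $Z(\AA)$, is needed, not part (i).
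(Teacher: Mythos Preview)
The paper does not give its own proof of Theorem~\ref{T:ZK}; the result is simply attributed to Zhikov--Krivenko with a reference to~\cite{ZK} (see also~\cite{JKO}), so there is no in-paper argument to compare against.

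Your proof is correct. The reduction by orthogonality in $\BB^2\cong L^2(\KK,\mm)$ to showing that any $h\perp Z(\AA)$ is constant, the observation that $f*\z\in Z(\AA)$ whenever $f\in\AA$ and $\hat\z\in C_c^\infty(\R^n\setminus\{0\})$ (uniform limit of translates, support of the transform contained in $\supp\hat\z$), and the key Fourier step---that $\phi_f(y)=M((\tau_y h)f)$ is a bounded continuous function annihilating all Schwartz functions with spectrum away from $0$, hence has distributional Fourier transform supported at the origin, hence is a polynomial, hence constant---are all sound. The Bochner-integral interchange $M(h\,(f*\z))=\int_{\R^n}\z(y)\,M((\tau_y h)f)\,dy$ is justified exactly as you indicate, via the strong continuity of $y\mapsto\tau_y$ on $\BB^2$ (from Theorem~\ref{T1}(ii) for $h\in\AA$, then by density and the isometry of $\tau_y$) together with the rapid decay of $\z$. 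The conclusion $\tau_y h\overset{\BB^2}{=}h$ and the appeal to ergodicity are then immediate. This orthogonality-plus-spectral argument is in fact the standard route to the result (cf.~\cite{JKO}, p.~246--247), so your approach is not only correct but essentially the expected one.
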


The following immediate corollary of Theorem~\ref{T:ZK}, established in \cite{AFS},  will be used in Section~\ref{S:5} concerning the homogenization of a porous medium type equation.   

\begin{lemma}[cf.\ \cite{AFS}]\label{L:ort}
Let $\AA$ be an ergodic algebra in $\BUC(\R^n)$ and $h\in \BB^2$ such that 
$M(h\Delta f)=0$ for all $f\in \AA$ such that $\Delta f \in \AA$. Then $h$ is 
$\BB^2$-equivalent to a constant.
\end{lemma}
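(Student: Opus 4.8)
The plan is to show that $h$ is invariant in $\BB^2$, i.e.\ that $\tau_y h \overset{\BB^2}{=} h$ for all $y \in \R^n$, and then invoke the definition of ergodic algebra (Definition~\ref{D:6}) to conclude that $h$ is $\BB^2$-equivalent to a constant. The bridge between the hypothesis on $h$ and invariance is Theorem~\ref{T:ZK}: since $\AA$ is ergodic, $Z(\AA)$ is $\BB^2$-dense in $V(\AA)$, and every element of $Z(\AA)$ is of the form $\Delta u$ with $u \in C^\infty(\R^n) \cap Z(\AA) \subset \AA$ by Lemma~\ref{L:za}(i). Hence the hypothesis $M(h\,\Delta f) = 0$ for all $f \in \AA$ with $\Delta f \in \AA$ gives, in particular, $M(h\,\zeta) = 0$ for every $\zeta \in Z(\AA)$, and therefore, by density and continuity of the bilinear form $M(\cdot\,\cdot)$ on $\BB^2 \times \BB^2$, $M(h\,\varphi) = 0$ for every $\varphi \in V(\AA)$ — and so for every $\varphi \in \BB^2$ with $M(\varphi) = 0$, since $V(\AA)$ is $\BB^2$-dense in $\{\varphi \in \BB^2 : M(\varphi) = 0\}$ (the latter being the closure of $V(\AA)$ in $\BB^2 \cong L^2(\KK,\mm)$, whose orthogonal complement is the constants). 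In other words, $h$ is $\BB^2$-orthogonal to the mean-zero subspace, which forces $h \overset{\BB^2}{=} M(h)$ directly — so in fact one gets the conclusion without even passing through invariance.

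Concretely, I would argue as follows. Given $g \in \BB^2$, write $\varphi := g - M(g)$, so $M(\varphi) = 0$. By the remarks above, $M(h\,\varphi) = 0$, i.e.\ $M(h\,g) = M(g)\,M(h)$ for all $g \in \BB^2$; equivalently $M\bigl((h - M(h))\,g\bigr) = 0$ for all $g \in \BB^2$. Taking $g = h - M(h) \in \BB^2$ yields $M\bigl(|h - M(h)|^2\bigr) = 0$, which is exactly the statement that $h$ is $\BB^2$-equivalent to the constant $M(h)$. The only point requiring care is the passage from "$M(h\,\zeta) = 0$ for all $\zeta \in Z(\AA)$" to "$M(h\,\varphi) = 0$ for all $\varphi \in V(\AA)$": this uses that $Z(\AA)$ is $\BB^2$-dense in $V(\AA)$ (Theorem~\ref{T:ZK}) together with the continuity estimate $|M(h\,\varphi_1) - M(h\,\varphi_2)| \le |h|_2\,|\varphi_1 - \varphi_2|_2$ for the bilinear form on $\BB^2 \times \BB^2$ recalled after Remark~\ref{R:0.1}; and then from $V(\AA)$ to all mean-zero elements of $\BB^2$, one uses that $\AA$ is $\BB^2$-dense in $\BB^2$ by construction (so $V(\AA) = \AA \cap \ker M$ is dense in $\ker M$ within $\BB^2$, the mean being $\BB^2$-continuous).

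The main obstacle, such as it is, is purely bookkeeping: one must confirm that the functions $u$ supplied by Lemma~\ref{L:za}(i) genuinely lie in $\AA$ (they do, since $u \in Z(\AA) \subset \AA$) and thus qualify as admissible test functions $f$ in the hypothesis with $\Delta f = f$'s defining element of $Z(\AA)$ — so that the hypothesis indeed annihilates all of $Z(\AA)$ against $h$. Everything else is a two-line Hilbert-space orthogonality argument once Theorem~\ref{T:ZK} is in hand. I would keep the write-up to that: identify the admissible test functions with $Z(\AA)$, invoke density, and conclude by pairing $h - M(h)$ with itself.
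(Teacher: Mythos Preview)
Your argument is correct and is precisely the natural deduction from Theorem~\ref{T:ZK}, which is all the paper claims: it does not give a proof but cites the lemma as an immediate corollary of Theorem~\ref{T:ZK} from \cite{AFS}. The one substantive step you identify --- that Lemma~\ref{L:za}(i) guarantees every $\zeta \in Z(\AA)$ arises as $\Delta u$ with $u \in Z(\AA) \subset \AA$, so the hypothesis kills all of $Z(\AA)$ against $h$ --- is exactly the point, and the remaining density and orthogonality steps are routine.
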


Theorem~\ref{T:ZK} also motivates the following definition.

\begin{definition}\label{D:new} An algebra w.m.v.\ $\AA$ is said to be {\em regular} if $Z(\AA)$ is dense in $V(\AA)$ in the topology of the $\sup$-norm.
\end{definition}

We have the following important fact about regular algebras w.m.v..

\begin{proposition}\label{P:reg2} If $\AA$ is a regular algebra w.m.v., then $\AA$ is ergodic.
\end{proposition}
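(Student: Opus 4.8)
The plan is to show that any invariant function $f\in\BB^2$ (in the sense of \eqref{e1.INV}) must be $\BB^2$-equivalent to a constant, which is exactly Definition~\ref{D:6}. Since $\BB^2$ is the completion of $\AA$ with respect to $|\cdot|_2$, and (by Theorem~\ref{T1}(iv)) is isometrically isomorphic to $L^2(\KK,\mm)$, the natural strategy is to test the invariant function $f$ against a dense class of functions and exploit the orthogonal decomposition $\BB^2 = \text{constants}\oplus \overline{V(\AA)}^{\,\BB^2}$. Indeed, since constants have mean value equal to themselves, and every element of $\AA$ is its mean plus an element of $V(\AA)$, it suffices to prove that an invariant $f$ is $\BB^2$-orthogonal to $V(\AA)$, i.e. $M(f\,\overline g)=0$ (in the real case simply $M(fg)=0$) for all $g\in V(\AA)$. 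By regularity (Definition~\ref{D:new}), $Z(\AA)$ is $\sup$-norm dense in $V(\AA)$, hence also $\BB^2$-dense in $\overline{V(\AA)}^{\,\BB^2}$, so it is enough to check $M(fg)=0$ for all $g\in Z(\AA)$.

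The key point is then the following computation for $g\in Z(\AA)$. By Lemma~\ref{L:za}(i), we may write $g = \Delta u$ with $u = g*\zeta \in C^\infty(\R^n)\cap Z(\AA)$, where $\zeta$ is a fixed Schwartz-type kernel. Writing the Laplacian as a limit of second difference quotients, $\Delta u = \lim_{h\to 0} h^{-2}\sum_{i=1}^n\bigl(\tau_{h e_i}u - 2u + \tau_{-h e_i}u\bigr)$ with convergence in $\sup$-norm (hence in $\BB^2$), and using that the bilinear form $M(\cdot\,\cdot)$ on $\BB^2$ is continuous with respect to $|\cdot|_2$, we get
\begin{equation}\label{eP.reg.1}
M(f g) = M(f\,\Delta u) = \lim_{h\to 0}\frac{1}{h^2}\sum_{i=1}^n M\bigl(f\,(\tau_{h e_i}u - 2u + \tau_{-h e_i}u)\bigr).
\end{equation}
Now I use translation-invariance of the mean value, $M(\phi)=M(\tau_y\phi)$, together with the fact that $M(\tau_y f\cdot \tau_y u) = M(\tau_y(fu)) = M(fu)$: each term $M(f\,\tau_{h e_i}u)$ can be rewritten, after translating by $-h e_i$, as $M(\tau_{-h e_i}f\cdot u)$, and similarly $M(f\,\tau_{-h e_i}u) = M(\tau_{h e_i}f\cdot u)$. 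Since $f$ is invariant, $\tau_{\pm h e_i}f \overset{\BB^2}{=} f$, so by the continuity of $M(\cdot\,u):\BB^2\to\R$ we have $M(\tau_{\pm h e_i}f\cdot u) = M(f u)$. Therefore each bracketed sum in \eqref{eP.reg.1} equals $M(f u) - 2M(f u) + M(f u) = 0$, whence $M(fg)=0$. This holds for every $g\in Z(\AA)$, and by the density argument above, for every $g\in V(\AA)$.

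To finish: write $f = M(f) + (f - M(f))$; the second summand lies in $\overline{V(\AA)}^{\,\BB^2}$ and, being invariant (invariance is preserved under subtracting a constant), is $\BB^2$-orthogonal to all of $V(\AA)$ by what we just proved, hence orthogonal to its own $\BB^2$-closure, hence $|f - M(f)|_2^2 = M\bigl((f-M(f))^2\bigr) = 0$. Thus $f \overset{\BB^2}{=} M(f)$, a constant, and $\AA$ is ergodic.

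I expect the main obstacle to be the justification of interchanging the limit in \eqref{eP.reg.1} with $M(f\,\cdot)$ and of the identity $M(f\,\tau_y u) = M(\tau_{-y}f\cdot u)$ at the level of $\BB^2$ rather than $\AA$: one must know that translations extend to isometries of $\BB^2$ (stated in the excerpt after Definition~\ref{D:5}), that the product extends to a continuous bilinear map $\BB^2\times\BB^2\to\BB^1$ with $|fg|_1\le|f|_2|g|_2$, and that $M:\BB^1\to\R$ is $|\cdot|_1$-continuous — all of which are available from the setup. A secondary technical point is verifying that $\sup$-norm density of $Z(\AA)$ in $V(\AA)$ upgrades to $\BB^2$-density, which is immediate from $|\cdot|_2 \le \|\cdot\|_{\sup}$ on $\AA$ since $\mm$ is a probability measure. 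Everything else is bookkeeping with the mean value's translation-invariance.
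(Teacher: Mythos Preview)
Your proof is correct but follows a different route from the paper's. The paper's argument is shorter and goes via the characterization in Lemma~\ref{L:ergo}: for $f\in\AA$ with $M(f)=0$, regularity gives $g\in Z(\AA)$ with $\|f-g\|_\infty<\ve$, and then Lemma~\ref{L:za}(ii) (uniform decay to zero of the translated averages of $g$) immediately yields \eqref{e.Lergo} up to an $O(\ve^2)$ error. No Laplacian, no second differences, and no direct appeal to the $\BB^2$ inner-product structure are needed.

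Your argument instead works directly with Definition~\ref{D:6} and uses Lemma~\ref{L:za}(i) rather than (ii): writing $g=\Delta u$ and exploiting translation-invariance of $M$ together with invariance of $f$ to kill $M(f\,\Delta u)$ via the second-difference identity. This is essentially an ``infinitesimal'' version of the averaging argument, and it has the merit of making transparent that $\BB^2$-density of $Z(\AA)$ in $V(\AA)$ (rather than the stronger $\sup$-density assumed in regularity) already suffices for ergodicity---indeed your step ``$M(fg)=0$ for all $g\in Z(\AA)$'' does not use regularity at all, so with $\BB^2$-density in place of $\sup$-density the same orthogonality conclusion follows. The paper's proof, by contrast, genuinely uses the $\sup$-norm approximation to control the error uniformly inside the outer mean $M_y$ in \eqref{e.Lergo}. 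Both approaches are sound; the paper's is more economical, yours more structural and closer in spirit to Lemma~\ref{L:ort}.
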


\begin{proof} We are going to use the characterization of ergodic algebras provided by Lemma~\ref{L:ergo}. Let $f\in\AA$. Clearly, to prove \eqref{e.Lergo}, we may assume $M(f)=0$. Now, since $\AA$ is regular, given $\ve>0$, we may find $g\in Z(\AA)$ such that $\|f-g\|_\infty<\ve$. Hence,  
$$
\limsup_{t\to\infty}M_y\left(\bigl|\frac{1}{|B(0;t)|}
\int_{B(0;t)}f(x+y)\,dx\bigr|^2\right)\le 2\lim_{t\to\infty}M_y\left(\bigl|\frac{1}{|B(0;t)|}
\int_{B(0;t)}g(x+y)\,dx\bigr|^2\right)+2\ve^2=2\ve^2,
$$
where we used  Lemma~\ref{L:za}(ii) for the last equality. This implies \eqref{e.Lergo}.
\end{proof}

We next state a property of regular algebras~w.m.v.\ which will be used in our application to
homogenization of porous medium type equations on bounded domains in the final part of this paper. 

\begin{lemma}\label{L:1.3} Let $\AA$ be a regular algebra~w.m.v. If $f\in V(\AA)$,
then for any $\ve>0$ there exists a  function $u_\ve\in Z(\AA)$ satisfying the inequalities
\begin{gather}
f-\ve\leq \Delta u_\ve\le f+\ve.\label{e1.4_0}
\end{gather}
\end{lemma}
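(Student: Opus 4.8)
The plan is to deduce Lemma~\ref{L:1.3} from the density of $Z(\AA)$ in $V(\AA)$ provided by the definition of a regular algebra, combined with the solvability of the Poisson equation in $Z(\AA)$ recorded in Lemma~\ref{L:za}(i). The key observation is that the desired inequality \eqref{e1.4_0} does \emph{not} ask us to solve $\Delta u_\ve = f$ (which need not have a solution in the algebra, since $f\notin Z(\AA)$ in general), but only to solve it approximately in the sup-norm; and approximating in the sup-norm is exactly what regularity buys us.

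First I would invoke regularity of $\AA$: since $f\in V(\AA)$, for the given $\ve>0$ there exists $g\in Z(\AA)$ with $\|f-g\|_\infty\le\ve$. Next, by Lemma~\ref{L:za}(i) applied to $g\in Z(\AA)$, there is a function $u_\ve\in C^\infty(\R^n)\cap Z(\AA)$ with $\Delta u_\ve = g$. Combining these two facts gives, pointwise in $x\in\R^n$,
\begin{equation*}
\Delta u_\ve(x) = g(x)\in[f(x)-\ve,\ f(x)+\ve],
\end{equation*}
which is precisely \eqref{e1.4_0}. Thus $u_\ve\in Z(\AA)$ is the required function, and the proof is complete.

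There is essentially no obstacle here: the lemma is a near-immediate corollary of the two cited results, and the only thing to be careful about is bookkeeping, namely that the $g$ produced by regularity indeed lies in $Z(\AA)$ (so that Lemma~\ref{L:za}(i) is applicable) and that the resulting $u_\ve$ lies in $Z(\AA)$ (as asserted in Lemma~\ref{L:za}(i)). One might add a remark that since $u_\ve\in Z(\AA)$ one automatically has $M(u_\ve)=0$ by Lemma~\ref{L:za}(ii), should that normalization be wanted later, but it is not needed for the statement as given. If one preferred a slightly more symmetric estimate one could instead choose $g$ with $\|f-g\|_\infty<\ve$ strictly, yielding the strict inequalities $f-\ve<\Delta u_\ve<f+\ve$; the non-strict form stated is of course weaker and follows a fortiori.
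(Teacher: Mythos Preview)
Your proof is correct and follows exactly the paper's own argument: the paper simply states that the lemma ``follows immediately from Lemma~\ref{L:za}(i) and Definition~\ref{D:new},'' which is precisely the two-step approximation-then-inversion you carry out.
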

\begin{proof} This follows immediately from Lemma~\ref{L:za}(i) and Definition~\ref{D:new}.
\end{proof} 

The space $\FS(\R^n)$ studied in \cite{FS} provides a very encompassing example of a regular algebra w.m.v.. 

\begin{definition}\label{D:7} The Fourier-Stieltjes space, denoted by $\FSM(\R^n)$, is the completion relatively to the $\sup$-norm of the space of functions
 $\FSM_*(\R^n)$ defined by
\begin{equation}\label{schiara}
\FSM_*(\R^n):=\left\{f:\R^n\to\R\,:\,\text{$f(x)=\int_{\R^n} e^{i x\cdot y}\,d\nu(y)$ for some
$\nu\in\M_*(\R^n)$}\right\},
\end{equation}
where by $\M_*(\R^n)$ we denote the space of complex-valued measures $\mu$
with finite total variation, i.e., $|\mu|(\R^n)<\infty$.
\end{definition}

Recall that a subalgebra $\BB\subset\AA$ is called an ideal of $\AA$
if for any $f\in\AA$ and $g\in\BB$ we have $fg\in\BB$. Let
$C_0(\R^n)$ denote the closure of $C^\infty_c(\R^n)$ with respect to the
sup norm. The following result was established in~\cite{FS}.

\begin{proposition}[cf.\ \cite{FS}]\label{P:1} $\FSM(\R^n)\subset\BUC(\R^n)$ and it is an algebra w.m.v.\ containing $C_0(\R^n)$ as an ideal.
Moreover, $\FSM(\R^n)$ is a regular algebra w.m.v.\ and the space $\PAP(\R^n)$ of the perturbed almost periodic functions,
defined as
$$
\PAP(\R^n):=\{f\in\BUC(\R^n)\,:\,f=g+\psi,\ g\in\AP(\R^n),\ \psi\in C_0(\R^n)\},
$$
is a closed strict subalgebra of  $\FSM(\R^n)$.
\end{proposition}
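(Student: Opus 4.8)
The plan is to verify the algebra‑with‑mean‑value axioms on the dense subalgebra $\FSM_*(\R^n)$ and transfer them to its sup‑norm closure, to obtain regularity by truncating the representing measures, and to handle $\PAP(\R^n)$ through a projection estimate (for closedness) together with a Fourier--Stieltjes transform of a continuous singular measure (for strictness). First I would check $\FSM(\R^n)\subset\BUC(\R^n)$ and axioms (A)--(D) of Definition~\ref{D:5}: for $f(x)=\int_{\R^n}e^{ix\cdot y}\,d\nu(y)$ with $\nu\in\M_*(\R^n)$ one has $\|f\|_\infty\le|\nu|(\R^n)$ and uniform continuity by dominated convergence; the product of two such functions is the transform of $\nu_1*\nu_2\in\M_*(\R^n)$, a translate $\tau_z f$ corresponds to the modulated measure $e^{iz\cdot y}\,d\nu(y)\in\M_*(\R^n)$, and since $\tfrac1{t^n|A|}\int_{A_t}e^{ix\cdot y}\,dx\to\mathbf 1_{\{y=0\}}$ as $t\to\infty$ (Riemann--Lebesgue for $y\ne0$), a further application of dominated convergence gives that $f$ has mean value $M(f)=\nu(\{0\})$. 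Since products, translations and the mean value are all sup‑norm continuous, these properties survive passage to the closure, which is closed by construction and contains $1=\int e^{ix\cdot y}\,d\delta_0(y)$. For the ideal property, the Fourier transform of a $C_c^\infty$ function is Schwartz and hence $L^1$, so $C_c^\infty(\R^n)\subset\FSM_*(\R^n)$ and, on closures, $C_0(\R^n)\subset\FSM(\R^n)$; and if $f\in\BUC(\R^n)$ and $g\in C_0(\R^n)$, approximating $g$ uniformly by functions in $C_c(\R^n)$ shows $fg\in C_0(\R^n)$.

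For regularity I would show $Z(\FSM(\R^n))$ is sup‑norm dense in $V(\FSM(\R^n))$, as required by Definition~\ref{D:new}. Take $f=\int e^{ix\cdot y}\,d\nu(y)$ with $\nu\in\M_*(\R^n)$ and $M(f)=\nu(\{0\})=0$, equivalently $|\nu|(\{0\})=0$. For $0<\rho<R$ let $\nu_{\rho,R}$ be the restriction of $\nu$ to the annulus $\{\rho\le|y|\le R\}$ and $f_{\rho,R}$ its transform; a short computation with the Fourier‑transform convention of Section~\ref{S:2} identifies the distribution $\widehat{f_{\rho,R}}$ with a constant multiple of $\nu_{\rho,R}$, whose support is compact and omits $0$, so $f_{\rho,R}\in Z(\FSM(\R^n))$, while $\|f-f_{\rho,R}\|_\infty\le|\nu|(B(0;\rho))+|\nu|(\R^n\setminus B(0;R))\to0$ as $\rho\to0$ and $R\to\infty$, using $|\nu|(\{0\})=0$ and finiteness of $|\nu|$. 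This handles the $f$ coming from $\FSM_*(\R^n)$; a general $f\in V(\FSM(\R^n))$ is approximated in sup‑norm by some $g\in\FSM_*(\R^n)$, and $g-M(g)\cdot1$ then has zero mean and stays within $2\|f-g\|_\infty$ of $f$ since $|M(g)|=|M(g-f)|\le\|f-g\|_\infty$. Hence $\FSM(\R^n)$ is regular, and by Proposition~\ref{P:reg2} also ergodic.

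Finally I would treat $\PAP(\R^n)$. Trigonometric polynomials are transforms of finitely supported measures, so $\AP(\R^n)\subset\FSM(\R^n)$, and with $C_0(\R^n)\subset\FSM(\R^n)$ this yields $\PAP(\R^n)\subset\FSM(\R^n)$; it is a subalgebra because $\AP(\R^n)$ is an algebra and a bounded function times a $C_0$ function lies in $C_0(\R^n)$. For closedness, the crucial estimate is $\|g\|_\infty\le\|g+\psi\|_\infty$ for $g\in\AP(\R^n)$, $\psi\in C_0(\R^n)$: given $\e>0$, pick $x_0$ with $|g(x_0)|>\|g\|_\infty-\e$; the set of $\e$‑almost periods of $g$ is relatively dense, hence unbounded, so one may choose such a $\tau$ with $|\psi(x_0+\tau)|<\e$, whence $|(g+\psi)(x_0+\tau)|>\|g\|_\infty-3\e$. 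Since also $\AP(\R^n)\cap C_0(\R^n)=\{0\}$, the coordinate projections $g+\psi\mapsto g$ and $g+\psi\mapsto\psi$ are well defined and bounded on $\PAP(\R^n)$, so any Cauchy sequence in $\PAP(\R^n)$ splits into Cauchy sequences in the closed spaces $\AP(\R^n)$ and $C_0(\R^n)$ and its limit lies in $\PAP(\R^n)$. For strictness I would exhibit a finite atomless measure $\nu$ with $\widehat\nu\notin C_0(\R^n)$ --- e.g.\ the Cantor--Lebesgue probability measure placed on the $x_1$‑axis and tensored with $\delta_0$ in the remaining coordinates, so that $|\widehat\nu|$ stays bounded away from $0$ along $(2\pi3^m,0,\dots,0)$, $m\in\N$ --- and argue that if $\widehat\nu=g+\psi\in\PAP(\R^n)$ with $g\in\AP(\R^n)$, $\psi\in C_0(\R^n)$, then every Bohr--Fourier coefficient of $g$ equals $M(\widehat\nu(x)e^{-i\lambda\cdot x})=\nu(\{\lambda\})=0$ (using $M(\psi(x)e^{-i\lambda\cdot x})=0$ and the atomlessness of $\nu$), forcing $g\equiv0$ and $\widehat\nu=\psi\in C_0(\R^n)$, a contradiction.

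The steps I expect to be the real obstacles are the two that genuinely use harmonic analysis rather than soft functional‑analytic bookkeeping: the inequality $\|g\|_\infty\le\|g+\psi\|_\infty$ (equivalently, boundedness of the almost‑periodic projection), on which the closedness of $\PAP(\R^n)$ rests, and the construction of a finite atomless measure whose Fourier--Stieltjes transform escapes $C_0(\R^n)$, on which strictness rests. Verifying the algebra and mean‑value axioms, the ideal property, and regularity reduces to manipulations on $\FSM_*(\R^n)$ and is essentially routine.
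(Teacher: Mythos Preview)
The paper does not supply its own proof of this proposition; it simply cites \cite{FS}. Your argument is correct and self-contained, and it follows what is presumably the route taken in \cite{FS}: verify the algebra-with-mean-value axioms on $\FSM_*(\R^n)$ via convolution of measures, modulation, and the computation $M(f)=\nu(\{0\})$, then pass to the sup-norm closure; obtain regularity by restricting the representing measure to annuli $\{\rho\le|y|\le R\}$, using $|\nu|(\{0\})=0$ and $|\nu|(\R^n)<\infty$; prove closedness of $\PAP(\R^n)$ from the projection bound $\|g\|_\infty\le\|g+\psi\|_\infty$ (your almost-period argument is the standard one); and obtain strictness from the Cantor--Lebesgue measure, a classical example of a finite atomless measure whose Fourier--Stieltjes transform does not lie in $C_0$. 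Each of the two steps you flagged as potential obstacles is handled correctly.
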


\section{Two-scale Young Measures}\label{S:3}

In this section we recall the theorem giving the existence  of two-scale Young measures established in \cite{AFS}. We begin by recalling the concept of vector-valued
algebra with mean value.

Given a Banach space $E$ and an algebra w.m.v.\ $\AA$, 
we denote by $\AA(\re^n;E)$ the space 
of functions $f\in\BUC(\R^n;E)$ satisfying the following:
\begin{enumerate}
\item[(i)]  $L_f:=\la L,f\ra$ belongs to 
$\AA$ for all $L\in E^*$;

\item[(ii)] The family  $\{L_f\,:\, L\in E^*,\ \|L\|\le 1 \}$ is relatively
compact in $\AA$.
\end{enumerate}


\begin{theorem}[cf.\ \cite{AFS}]\label{T:T2}
Let $E$ be a Banach space, $\AA$ an algebra w.m.v.\ and ${\mathcal K}$ be the compact associated with
$\AA$. There is an isometric isomorphism between $\AA(\R^n;E)$ and
$C({\mathcal K};E)$. Denoting by $g\mapsto\util{g}$ the canonical map from
$\AA$ to $C({\mathcal K})$, the isomorphism associates to $f\in\AA(\R^n;E)$
the map $\tilde{f}\in C({\mathcal K};E)$ satisfying
\begin{equation}\label{mio2}
\util{\langle L,f\rangle}=\langle L,\tilde{f}\rangle\in C({\mathcal K})
\qquad\forall L\in E^*.
\end{equation}
In particular, for each $f\in \AA(\R^n;E)$, $\|f\|_E\in\AA$.
\end{theorem}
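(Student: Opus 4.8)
The plan is to lift the scalar identification of Theorem~\ref{T1} to $E$-valued functions. Write $i:\AA\to C(\KK)$ for the isometric isomorphism of Theorem~\ref{T1}, keeping the notation $g\mapsto\util{g}$, and recall from its construction the canonical continuous map $\pi:\R^n\to\KK$ with dense image satisfying $\util{g}(\pi(x))=g(x)$ for all $g\in\AA$, $x\in\R^n$ (density: a $G\in C(\KK)$ vanishing on $\pi(\R^n)$ would correspond to $g=i^{-1}(G)\in\AA$ vanishing identically on $\R^n$, whence $G=0$). Given $f\in\AA(\R^n;E)$, I will define $\tilde f:\KK\to E$ by: for $z\in\KK$ pick a net $(x_\alpha)$ with $\pi(x_\alpha)\to z$ and set $\tilde f(z):=\lim_\alpha f(x_\alpha)$, the limit taken in the norm of $E$. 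The core of the argument is to make sense of this limit and to check it produces an element of $C(\KK;E)$.

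The key point, and the place where condition (ii) in the definition of $\AA(\R^n;E)$ is essential, is that $\{f(x):x\in\R^n\}$ is relatively compact in $E$. Indeed, $\Phi:E^*\to\AA$, $\Phi(L):=L_f$, is linear and bounded, and by (ii) it carries the unit ball of $E^*$ into a relatively compact subset of $\AA$, i.e.\ $\Phi$ is a compact operator; hence so is its adjoint $\Phi^*:\AA^*\to E^{**}$. For $x\in\R^n$ the evaluation $\delta_x\in\AA^*$ has norm $1$ and satisfies $\la\Phi^*\delta_x,L\ra=\la\delta_x,L_f\ra=\la L,f(x)\ra$, so $\Phi^*\delta_x=f(x)\in E\subset E^{**}$; thus $\{f(x):x\in\R^n\}=\Phi^*(\{\delta_x\})$ is totally bounded in $E^{**}$, hence totally bounded in $E$, hence relatively compact in $E$ by completeness. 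Let $C\subset E$ denote its (norm-compact) closure. Now for $z\in\KK$ and a net $\pi(x_\alpha)\to z$, the net $(f(x_\alpha))$ lies in $C$; every convergent subnet has a limit $\xi$ with $\la L,\xi\ra=\lim\util{L_f}(\pi(x_\alpha))=\util{L_f}(z)$ for all $L\in E^*$ (using continuity of $\util{L_f}=i(L_f)$ on $\KK$), and since $E^*$ separates points this $\xi$ is unique; therefore $(f(x_\alpha))$ itself converges in $C$ to a value $\tilde f(z)$ depending only on $z$. By construction $\la L,\tilde f(z)\ra=\util{L_f}(z)$ for all $z\in\KK$ and $L\in E^*$, which is \eqref{mio2}; in particular $z\mapsto\la L,\tilde f(z)\ra=\util{L_f}(z)$ is continuous, so $\tilde f$ is weakly continuous, and since $\tilde f(\KK)\subset C$ and the weak and norm topologies coincide on the norm-compact set $C$, the map $\tilde f$ is norm-continuous, i.e.\ $\tilde f\in C(\KK;E)$.

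Linearity of $f\mapsto\tilde f$ is immediate from \eqref{mio2} together with the fact that $E^*$ separates points. For the isometry, using Theorem~\ref{T1} and \eqref{mio2},
\begin{align*}
\|\tilde f\|_{C(\KK;E)}
&=\sup_{z\in\KK}\sup_{\|L\|\le1}|\la L,\tilde f(z)\ra|
=\sup_{\|L\|\le1}\|\util{L_f}\|_{C(\KK)}\\
&=\sup_{\|L\|\le1}\|L_f\|_{\AA}
=\sup_{\|L\|\le1}\sup_{x\in\R^n}|\la L,f(x)\ra|
=\sup_{x\in\R^n}\|f(x)\|_E=\|f\|_{\BUC(\R^n;E)}.
\end{align*}
For surjectivity, given $F\in C(\KK;E)$ set $f:=F\circ\pi:\R^n\to E$; then $L_f=i^{-1}(\la L,F\ra)\in\AA$ for each $L$, so (i) holds, and $\{L_f:\|L\|\le1\}$ corresponds under $i$ to $\{\la L,F\ra:\|L\|\le1\}\subset C(\KK)$, which is uniformly bounded by $\|F\|$ and equicontinuous (since $F$ is uniformly continuous on the compact $\KK$), hence relatively compact by Arzel\`a--Ascoli, which is (ii); moreover $f$ is bounded ($f(\R^n)\subset F(\KK)$) and uniformly continuous because $\pi$ is uniformly continuous (pull back via $\pi\X\pi$ the basic entourages of $\KK$, which are finite intersections of sets $\{(z,z'):|\util{g}(z)-\util{g}(z')|<\eta\}$ with $g\in\AA$, using that $g\in\BUC(\R^n)$), so $f\in\AA(\R^n;E)$; and $\widetilde f=F$ since both satisfy \eqref{mio2} with the same functions $L_f$. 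Finally $\|f\|_E\in\AA$: $x\mapsto\|f(x)\|_E=\|\tilde f(\pi(x))\|_E$ equals $i^{-1}(\Psi)$ where $\Psi(z):=\|\tilde f(z)\|_E$ lies in $C(\KK)$. The main obstacle is the step isolated in the second paragraph---using condition (ii) to force the weak limits defining $\tilde f$ to lie in $E$ itself (not merely $E^{**}$) and to be norm-continuous; the rest is bookkeeping with the correspondence of Theorem~\ref{T1}.
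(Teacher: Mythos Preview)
The paper does not contain its own proof of this theorem; it simply cites \cite{AFS} and moves on. Your argument is correct and self-contained. The decisive step is the one you flag: using condition~(ii) to make $\Phi:L\mapsto L_f$ a compact operator $E^*\to\AA$, then invoking Schauder's theorem so that $\Phi^*$ carries the unit ball of $\AA^*$ (in particular the evaluations $\delta_x$) into a relatively norm-compact subset of $E^{**}$, which lands in $E$ because $\Phi^*\delta_x=f(x)\in E$. This is exactly what is needed to pass from weak-$*$ limits in $E^{**}$ to norm limits in $E$ and to upgrade weak continuity of $\tilde f$ to norm continuity via the coincidence of topologies on the compact range. The remaining verifications (isometry via the scalar isometry of Theorem~\ref{T1}, surjectivity via Arzel\`a--Ascoli on $C(\KK)$, and $\|f\|_E\in\AA$ by pulling back $\|\tilde f(\cdot)\|_E\in C(\KK)$) are routine and correctly carried out. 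One small point worth making explicit: the uniform continuity of $\pi:\R^n\to\KK$ that you use in the surjectivity step is indeed automatic, since the unique uniformity on the compact $\KK$ has a subbase of entourages of the form $\{(z,z'):|\util g(z)-\util g(z')|<\eta\}$ with $g\in\AA\subset\BUC(\R^n)$, each of which pulls back to a standard entourage on $\R^n$.
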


For $1\le p<\infty$, we define the space $L^p({\mathcal K};E)$ as the completion of $C({\mathcal K};E)$
with respect to the norm $\|\,\cdot\,\|_p$, defined as usual,
$$
\|f\|_p:=\left(\int_{{\mathcal K}}\|f\|_E^p\,d{\mathfrak m}\right)^{1/p}.
$$
As a standard procedure, we identify functions in $L^p$ that coincide ${\mathfrak m}$-a.e.\
in ${\mathcal K}$. 

Similarly, we define the space $\BB^p(\R^n; E)$ as the completion of $\AA(\R^n;E)$ with respect to the
seminorm
$$
|f|_p:=\left(\Medint_{\R^n}\|f\|_E^p\,dx\right)^{1/p},
$$
identifying functions in the same equivalence class determined by the seminorm $|\cdot|_p$. 
Clearly, the isometric isomorphism given by Theorem~\ref{T:T2} extends to an isometric isomorphism between
  $\BB^p(\R^n;E)$ and $L^p(\CK;E)$.

The next theorem gives the existence of two-scale Young measures associated 
with an algebra $\AA$. For the proof, we again refer to \cite{AFS}. 

Let $\Om\subset
\re^n$ be a bounded open set and $\{u_\ve(x)\}_{\ve>0}$ be a  family
of functions in $L^\infty(\Om;K)$, for some compact metric space $K$.

\begin{theorem}\label{T3}
Given any infinitesimal sequence $\{\ve_i\}_{i\in\N}$ there exist a
subnet $\{u_{\ve_{i(d)}}\}_{d\in D}$, indexed by a certain directed
set $D$, and a family of probability measures on $K$,
$\{\nu_{z,x}\}_{z\in {\mathcal K}, x\in \Om}$, weakly measurable with respect
to the product of the Borel $\sigma$-algebras in ${\mathcal K}$ and $\R^n$,
such that
\begin{equation}\label{young}
\lim_{D}\int_{\Om}\Phi(\frac{x}{\ve_{i(d)}},x,u_{\ve_{i(d)}}(x))\,dx=
\int_{\Om}\int_{{\mathcal K}}\la\nu_{z,x},\util{\Phi}(z,x,\cdot)\ra\,d{\mathfrak m}(z)\,dx
\qquad\forall\Phi\in\AA\left(\R^n;C_0(\Om\times K)\right).
\end{equation}
Here $\util{\Phi}\in C\left({\mathcal K};C_0(\Om\times K)\right)$ denotes the
unique extension of $\Phi$. Moreover, equality \eqref{young} still
holds for functions $\Phi$ in the following function spaces:
\begin{enumerate}
\item
$\BB^1(\R^n;C_0(\Om\times K))$;
\item
$\BB^p(\R^n;C(\bar \Om\times K))$ with $p>1$;
\item
$L^1(\Om;\AA(\R^n;C(K)))$.
\end{enumerate}
\end{theorem}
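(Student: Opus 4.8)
The plan is to reduce the statement to the existence of two-scale Young measures (Theorem~\ref{T3}) by extending the classical proof strategy for ordinary Young measures to the ``doubled'' test-function algebra $\AA(\R^n;C_0(\Om\X K))$. First I would fix an infinitesimal sequence $\{\ve_i\}$ and regard, for each $i$, the map $\Phi\mapsto \int_\Om \Phi(x/\ve_i,x,u_{\ve_i}(x))\,dx$ as a continuous linear functional $\Lambda_i$ on the Banach space $\AA(\R^n;C_0(\Om\X K))$; by Theorem~\ref{T:T2} this space is isometrically isomorphic to $C(\KK;C_0(\Om\X K))\cong C(\KK\X\Om\X K)$ (using that $\KK\X\bar\Om\X K$ is compact and $\Phi$ vanishes on $\Om$'s boundary and is identified with $\tilde\Phi$). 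The functionals $\Lambda_i$ are uniformly bounded (by $|\Om|\,\|\Phi\|_\infty$ since the $u_{\ve_i}$ take values in the fixed compact $K$), so by Banach--Alaoglu there is a subnet $\{\Lambda_{\ve_{i(d)}}\}_{d\in D}$, indexed by some directed set $D$, converging weak-$*$ to a functional $\Lambda$ on $C(\KK\X\Om\X K)$.

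Next I would represent the limit functional. The functional $\Lambda$ is positive (each $\Lambda_i$ is positive: if $\Phi\ge0$ then $\Phi(x/\ve_i,x,u_{\ve_i}(x))\ge0$) and bounded by $|\Om|\|\Phi\|_\infty$, hence by the Riesz representation theorem it is integration against a nonnegative Radon measure $\Lambda$ on $\KK\X\bar\Om\X K$ of total mass $\le|\Om|$. The disintegration theorem, applied to the projection $\KK\X\bar\Om\X K\to\KK\X\bar\Om$, then yields a family of subprobability measures $\{\nu_{z,x}\}$ on $K$, weakly measurable with respect to the product Borel $\sigma$-algebra, such that $\Lambda(\Phi)=\int_\Om\int_\KK \la\nu_{z,x},\tilde\Phi(z,x,\cdot)\ra\,d\mm(z)\,dx$, once I identify the $\KK\X\bar\Om$-marginal of $\Lambda$; testing with $\Phi$ independent of the third variable, i.e. $\Phi(y,x,u)=\varphi(y)\psi(x)$ with $\varphi\in\AA$, $\psi\in C_c(\Om)$, and using the definition of the mean value together with Theorem~\ref{T1}(iii), shows that this marginal is exactly $\mm\otimes\,dx$, so the $\nu_{z,x}$ are genuine probability measures. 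This gives \eqref{young} for $\Phi\in\AA(\R^n;C_0(\Om\X K))$.

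Finally I would extend the identity to the three larger classes listed. For $\BB^1(\R^n;C_0(\Om\X K))$ I would approximate $\Phi$ in the $|\cdot|_1$-seminorm by elements of $\AA(\R^n;C_0(\Om\X K))$ and control the error on both sides: the right side by $\int_\KK\|\tilde\Phi-\tilde\Phi_k\|_{C_0}\,d\mm=|\Phi-\Phi_k|_1\to0$, and the left side by a uniform (in $\ve$) bound $\limsup_\ve \int_\Om|\Phi-\Phi_k|(x/\ve_i,x,u_{\ve_i}(x))\,dx \le |\Om|\,\Medint|\Phi-\Phi_k|_{C_0}\,$ — this last uniform estimate is exactly the point where one invokes that $\|\Phi\|_{C_0(\Om\X K)}\in\AA$ (Theorem~\ref{T:T2}) and the definition of the mean value applied along the sequence. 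The case $\BB^p(\R^n;C(\bar\Om\X K))$, $p>1$, follows by the same density/approximation scheme using Hölder with the conjugate exponent, and $L^1(\Om;\AA(\R^n;C(K)))$ follows by a Fubini-type argument treating $x$ as a parameter and applying the already-proven case pointwise in $x$, then integrating. The main obstacle, and the step deserving the most care, is the passage from the abstract weak-$*$ limit $\Lambda$ back to an honest family $\{\nu_{z,x}\}$ together with the verification that its marginal is $\mm\otimes dx$ and the weak measurability holds — that is, making the disintegration argument fully rigorous on the (merely compact Hausdorff, not metrizable) space $\KK$, where one must be slightly careful about which measurability and regularity hypotheses are available; the extension to the larger test classes, by contrast, is routine density bookkeeping.
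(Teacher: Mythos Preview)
The paper does not give its own proof of this theorem; it simply cites \cite{AFS}. Your outline---realize the functionals $\Phi\mapsto\int_\Om\Phi(x/\ve_i,x,u_{\ve_i})\,dx$ as uniformly bounded positive functionals on $\AA(\R^n;C_0(\Om\times K))\cong C(\KK;C_0(\Om\times K))\cong C_0(\KK\times\Om\times K)$, pass to a weak-$*$ convergent subnet via Banach--Alaoglu, represent the limit by Riesz as a Radon measure, identify the $(\KK\times\Om)$-marginal as $\mm\otimes dx$ by testing with $u$-independent functions and invoking the mean-value property, then disintegrate---is exactly the standard route and is the argument carried out in \cite{AFS}.

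Two small remarks. First, you are right that disintegration over the possibly non-metrizable compact $\KK$ is the delicate point; the usual way around it is to exploit that $K$ is a \emph{compact metric} space, so $C(K)$ is separable, and one applies Radon--Nikodym to the signed measures $\varphi\otimes\psi\mapsto\Lambda(\varphi\otimes\psi\otimes\theta)$ for a countable dense family of $\theta\in C(K)$, which yields the weak measurability of $(z,x)\mapsto\nu_{z,x}$ directly. Second, the extension to $\BB^1$ is a shade less automatic than you suggest: for $\Phi\in\BB^1$ one has only $\|\Phi\|_{C_0}\in\BB^1$, not in $\AA$, so the estimate on the left side must go through two layers of approximation (bound $\|\Phi-\Phi_k\|_{C_0}$ by $\|\Phi_j-\Phi_k\|_{C_0}\in\AA$ plus a tail), or else one interprets the left-hand limit via the approximating sequence and checks it is Cauchy along the subnet. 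This is still routine once set up carefully, and your identification of it as ``density bookkeeping'' is fair in spirit.
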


As in the  classical theory of Young measures  we have the following consequence
of Theorem~\ref{T3}.

\begin{lemma}\label{L:E} Let $\Om\subset\R^n$ be a bounded open set, let
$\{u_\ve\}\subset L^\infty(\Om;\R^m)$ be uniformly bounded
and let $\nu_{z,x}$ be a  two-scale Young
measure generated by a subnet $\{u_{\ve(d)}\}_{d\in D}$,
according to Theorem~\ref{T3}.  Assume that $U$ belongs
either to $L^1(\Om;\AA(\R^n;\R^m)))$ or to $\BB^p(\R^n;C(\bar \Om;\R^m))$ for some $p>1$.
Then
\begin{equation}\label{E:LE}
\nu_{z,x}=\d_{U(z,x)}\quad\text{if and only if}\quad
\lim_D\|u_{\ve(d)}(x)-U(\frac{x}{\ve(d)},x)\|_{L^1(\Om)}=0.
\end{equation}
\end{lemma}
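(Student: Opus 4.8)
The plan is to mimic the classical characterization of Dirac-mass Young measures, transported to the two-scale setting via Theorem~\ref{T3}. The key observation is that $\nu_{z,x}=\d_{U(z,x)}$ for $\mm\X\mathcal L^n$-a.e.\ $(z,x)$ is equivalent to the single scalar identity
\begin{equation*}
\int_\Om\int_\KK \la\nu_{z,x},|\cdot-\util U(z,x)|\wedge 1\ra\,d\mm(z)\,dx=0,
\end{equation*}
since the integrand is nonnegative and vanishes iff $\nu_{z,x}$ is supported at $\util U(z,x)$. So everything reduces to producing an admissible test function and passing to the limit.

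First I would reduce to the case of a bounded sequence by the standard trick: since $\{u_\ve\}$ is uniformly bounded, say by $R$, we may work with the compact set $K=\overline{B(0,R)}\subset\R^m$ and note that $U$ takes values in $K$ as well (for $L^1(\Om;\AA(\R^n;\R^m))$ this uses $\|U\|_E\in\AA$ from Theorem~\ref{T:T2} together with the observation that the generated Young measures are supported in $K$; for $\BB^p(\R^n;C(\bar\Om;\R^m))$ similarly). Then I would take as test function $\Phi(y,x,\l):=\psi(|\l-U(y,x)|)$ where $\psi:[0,\infty)\to[0,1]$ is the continuous bounded function $\psi(s)=\min\{s,1\}$; composition with $U$ keeps $\Phi$ in the appropriate class ($L^1(\Om;\AA(\R^n;C(K)))$ in the first case, $\BB^p(\R^n;C(\bar\Om\X K))$ in the second), because $\psi$ is Lipschitz and $\AA(\R^n;\cdot)$, $\BB^p(\R^n;\cdot)$ are stable under composition with uniformly continuous maps. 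Applying the extended form of \eqref{young} to this $\Phi$ gives
\begin{equation*}
\lim_D\int_\Om \psi\bigl(|u_{\ve(d)}(x)-U(\tfrac{x}{\ve(d)},x)|\bigr)\,dx=\int_\Om\int_\KK\la\nu_{z,x},\psi(|\cdot-\util U(z,x)|)\ra\,d\mm(z)\,dx.
\end{equation*}
For the ``$\Leftarrow$'' direction, $L^1$-convergence of $u_{\ve(d)}(\cdot)-U(\tfrac{\cdot}{\ve(d)},\cdot)$ to $0$ forces the left side to $0$ (since $\psi(s)\le s$ and $\Om$ is bounded), hence the right side is $0$, hence $\nu_{z,x}=\d_{\util U(z,x)}$ a.e. For the ``$\Rightarrow$'' direction, if $\nu_{z,x}=\d_{\util U(z,x)}$ the right side is $0$, so $\psi(|u_{\ve(d)}(\cdot)-U(\tfrac{\cdot}{\ve(d)},\cdot)|)\to 0$ in $L^1(\Om)$; combined with the uniform bound $|u_{\ve(d)}-U(\tfrac{\cdot}{\ve(d)},\cdot)|\le 2R$, this upgrades to $\|u_{\ve(d)}(\cdot)-U(\tfrac{\cdot}{\ve(d)},\cdot)\|_{L^1(\Om)}\to 0$ via the elementary bound $t\le (2R+1)\psi(t)$ valid for $t\in[0,2R]$.

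The main obstacle, and the only point requiring care, is verifying that the chosen $\Phi$ genuinely lies in one of the function classes for which \eqref{young} is asserted in Theorem~\ref{T3} — in particular that $x\mapsto U(\tfrac{x}{\ve},x)$-type expressions interact correctly with the two-scale structure, i.e.\ that $\Phi(y,x,\l)=\psi(|\l-U(y,x)|)$ is an element of $L^1(\Om;\AA(\R^n;C(K)))$ when $U\in L^1(\Om;\AA(\R^n;\R^m))$, and of $\BB^p(\R^n;C(\bar\Om\X K))$ when $U\in\BB^p(\R^n;C(\bar\Om;\R^m))$. This is where one invokes that $\AA(\R^n;E)$ and $\BB^p(\R^n;E)$ are closed under post-composition with Lipschitz functions of the $E$-valued argument (here the map $v\mapsto \psi(|\cdot-v|)$ from $\R^m$ into $C(K)$ is Lipschitz), together with Theorem~\ref{T:T2} to pass between $\AA(\R^n;E)$ and $C(\KK;E)$; the Fubini-type measurability needed for the $L^1(\Om;\cdot)$ case follows from weak measurability of $x\mapsto U(x,\cdot)$. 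Once this membership is checked the rest is the routine Young-measure argument sketched above.
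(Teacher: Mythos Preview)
The paper does not actually prove Lemma~\ref{L:E}; it merely states it as a consequence of Theorem~\ref{T3}, remarking that it follows ``as in the classical theory of Young measures''. Your proposal is precisely the expected classical argument---test with a Lipschitz function of $|\l-U|$ and use nonnegativity of the resulting integrand---correctly transported to the two-scale setting via the extended form of \eqref{young}. The identification of the only nontrivial point (membership of $\Phi(y,x,\l)=\psi(|\l-U(y,x)|)$ in the admissible test-function classes) and its resolution through the isomorphism $\AA(\R^n;E)\cong C(\KK;E)$ of Theorem~\ref{T:T2} are both sound.

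One minor remark: the claim that $U$ itself takes values in $K$ is not quite justified as written (knowing $\util U(z,x)\in K$ for $\mm\X\mathcal L^n$-a.e.\ $(z,x)$ does not obviously control $U(\tfrac{x}{\ve},x)$ pointwise), but you do not need it. Since the $u_\ve$ take values in the fixed compact $K$, you may simply take $\psi(s)=s$ rather than $\min\{s,1\}$: the map $v\mapsto|\cdot-v|$ from $\R^m$ to $C(K)$ is $1$-Lipschitz, so $\Phi$ lies in the required class, and the limit identity gives $\|u_{\ve(d)}-U(\tfrac{\cdot}{\ve(d)},\cdot)\|_{L^1(\Om)}\to 0$ directly without any truncation or uniform bound on $U$.
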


In Sections~\ref{S:5} and~\ref{S:6}, we will need  a result similar to Theorem~\ref{L:E}, in which the corrector function $U(z,x)$ does not belong to either of the spaces in the statement. Namely, we will need the following result. 

\begin{lemma}\label{T:3.2} Let $\Om\subset\R^n$ be a bounded open set, let
$\{u_\ve\}\subset L^\infty(\Om)$ be uniformly bounded
and let $\nu_{z,x}$ be a  two-scale Young
measure generated by a subnet $\{u_{\ve(d)}\}_{d\in D}$,
according to Theorem~\ref{T3}. Let $U(z,x)=G(\theta(z,x))$ where $G:\R\to\R$ is a function in $\BV_\loc(\R)$, $\theta\in L^\infty(\Om;\AA(\R^n))$, and assume that 
\begin{equation}\label{eT:3.2} 
 \mm\left(\{z\in\KK\,:\, \theta(z,x)=\a\}\right)=0, 
 \end{equation}
 for a.e.\ $x\in\Om$, for all $\a\in E$, where $E$ is the set of discontinuity of $G$. Suppose, $\nu_{z,x}=\d_{U(z,x)}$. Then 
 \begin{equation}\label{eT:3.2.1}
\lim_D\|u_{\ve(d)}(x)-U(\frac{x}{\ve(d)},x)\|_{L^1(\Om)}=0.
\end{equation} 
 \end{lemma}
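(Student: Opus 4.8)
The obstacle is that $U(z,x)=G(\theta(z,x))$ need not lie in $L^1(\Om;\AA(\R^n))$ nor in $\BB^p(\R^n;C(\bar\Om))$, so Lemma~\ref{L:E} does not apply directly; the discontinuities of $G$ are genuinely in the way. The idea is to \emph{approximate $G$ by continuous (hence admissible) functions} and control the error using the null--measure hypothesis \eqref{eT:3.2}. First I would observe that, since $u_{\ve(d)}$ is uniformly bounded, say $\|u_{\ve(d)}\|_\infty\le R$ for all $d$, and $\theta\in L^\infty(\Om;\AA(\R^n))$ is bounded too, everything takes place in a fixed compact interval $[-R',R']$; working with $G$ restricted there, $G\in\BV([-R',R'])$ has an at most countable discontinuity set $E$, and for each $\d>0$ we may pick a continuous $G_\d:\R\to\R$ with $G_\d=G$ outside a union of tiny intervals around the (finitely many relevant) jump points and $\|G_\d\|_\infty\le\|G\|_{L^\infty([-R',R'])}$, such that $G_\d\to G$ pointwise off $E$ and the set $N_\d:=\{v: G_\d(v)\ne G(v)\}$ is an open neighborhood of (finitely many of) the jump points with $N_\d\downarrow$ a set contained in $E\cup(\text{a null set})$ as $\d\to0$. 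Set $U_\d(z,x):=G_\d(\theta(z,x))$; since $G_\d$ is continuous and $\theta\in L^\infty(\Om;\AA(\R^n))$, we have $U_\d\in L^1(\Om;\AA(\R^n))$, so Lemma~\ref{L:E} (and the defining property \eqref{young} applied to test functions built from $G_\d$ and $\theta$) is available for $U_\d$.

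\smallskip

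The second step is to estimate the three-term split
\[
\|u_{\ve(d)}(\cdot)-U(\tfrac{\cdot}{\ve(d)},\cdot)\|_{L^1(\Om)}
\le \|u_{\ve(d)}(\cdot)-U_\d(\tfrac{\cdot}{\ve(d)},\cdot)\|_{L^1(\Om)}
+\|U_\d(\tfrac{\cdot}{\ve(d)},\cdot)-U(\tfrac{\cdot}{\ve(d)},\cdot)\|_{L^1(\Om)}.
\]
For the first term: since $\nu_{z,x}=\d_{U(z,x)}=\d_{G(\theta(z,x))}$, and since $G_\d=G$ $\nu_{z,x}$--a.e.\ in the relevant sense, I would show that $\nu_{z,x}=\d_{U_\d(z,x)}$ as well, for a.e.\ $(z,x)$ — here is exactly where \eqref{eT:3.2} enters, because $G(\theta(z,x))\ne G_\d(\theta(z,x))$ can only happen when $\theta(z,x)\in N_\d\subset$ a small neighborhood of $E$, and \eqref{eT:3.2} forces $\mm(\{z:\theta(z,x)\in E\})=0$ for a.e.\ $x$ (countable union over $\a\in E$). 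Strictly speaking $\nu_{z,x}$ is the \emph{fixed} Dirac mass $\d_{G(\theta(z,x))}$, while $U_\d$ differs from $U$ only on a set of $(z,x)$ of $\mm\otimes\mathcal L^n$–measure that I will call $m_\d$, with $m_\d\to0$ as $\d\to0$ by dominated convergence (using $G_\d\to G$ off $E$ and $\mm(\{z:\theta(z,x)\in E\})=0$ a.e.). Hence Lemma~\ref{L:E} applied with $U_\d$ gives $\limsup_D\|u_{\ve(d)}(\cdot)-U_\d(\tfrac{\cdot}{\ve(d)},\cdot)\|_{L^1(\Om)}=0$ once we know $\nu_{z,x}=\d_{U_\d(z,x)}$ for a.e.\ $(z,x)$; but that equality fails on a set of measure $m_\d$, so what I actually get is $\limsup_D\|u_{\ve(d)}-U_\d(\tfrac{\cdot}{\ve},\cdot)\|_{L^1(\Om)}\le C\,m_\d^{?}$ — to avoid this subtlety, I would instead feed into \eqref{young} the bounded function $\Phi(z,x,\lambda)=|\,\lambda-U_\d(z,x)\,|\wedge M$ directly and compute $\lim_D\int_\Om \Phi(\tfrac{x}{\ve(d)},x,u_{\ve(d)}(x))\,dx=\int_\Om\int_\KK |U(z,x)-U_\d(z,x)|\wedge M\,d\mm\,dx\le (2R'+M)\,m_\d$, giving a clean bound without pretending $\nu$ equals $\d_{U_\d}$.

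\smallskip

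For the second term, $\|U_\d(\tfrac{\cdot}{\ve(d)},\cdot)-U(\tfrac{\cdot}{\ve(d)},\cdot)\|_{L^1(\Om)}$, the integrand is $|G_\d(\theta(\tfrac{x}{\ve(d)},x))-G(\theta(\tfrac{x}{\ve(d)},x))|$, which is bounded by $2R'$ and supported where $\theta(\tfrac{x}{\ve(d)},x)\in N_\d$. I would bound its $L^1$ norm by $2R'\cdot\mathcal L^n\bigl(\{x\in\Om: \theta(\tfrac{x}{\ve(d)},x)\in \bar N_\d\}\bigr)$ and pass to the limit in $d$ using the two-scale convergence \eqref{young} with a continuous test function dominating $\mathbf 1_{\bar N_\d}\circ\theta$ (or directly a Urysohn function $\chi_\d\ge\mathbf 1_{\bar N_\d}$ supported in a slightly larger neighborhood $N_\d'$); this yields $\limsup_D \|U_\d(\tfrac{\cdot}{\ve(d)},\cdot)-U(\tfrac{\cdot}{\ve(d)},\cdot)\|_{L^1(\Om)}\le 2R'\int_\Om\int_\KK \chi_\d(\theta(z,x))\,d\mm(z)\,dx=:2R'\,m'_\d$, and again $m'_\d\to0$ as $\d\to0$ by \eqref{eT:3.2} and dominated convergence. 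Combining, $\limsup_D\|u_{\ve(d)}-U(\tfrac{\cdot}{\ve(d)},\cdot)\|_{L^1(\Om)}\le C(m_\d+m'_\d)$ for every $\d>0$, and letting $\d\to0$ gives \eqref{eT:3.2.1}. The only genuinely delicate point — and the one I expect to require the most care — is the measurability/selection bookkeeping: ensuring that $\{(z,x):\theta(z,x)\in E\}$, $U$, $U_\d$ and $\nu_{z,x}$ are jointly measurable so that the Fubini-type computations with \eqref{young} are legitimate, and that the countable exceptional set $E$ of $G$ interacts correctly with the "a.e.\ $x$" in hypothesis \eqref{eT:3.2}; this is routine but must be done honestly because it is precisely the mechanism by which the jumps of $G$ are rendered invisible.
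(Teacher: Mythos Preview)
Your approach is the paper's: approximate $G$ by a continuous $G_\d$, set $U_\d=G_\d\circ\theta\in L^1(\Om;\AA(\R^n))$, split by the triangle inequality, compute $\lim_D\int_\Om|u_{\ve(d)}-U_\d(\tfrac{x}{\ve(d)},x)|\,dx=\int_\Om\int_\KK|U-U_\d|\,d\mm\,dx$ by feeding $\Phi(z,x,\l)=|\l-U_\d(z,x)|$ into \eqref{young}, and control the remaining error via a continuous bump $\zeta_\d\circ\theta$ whose two-scale limit tends to zero by \eqref{eT:3.2} and dominated convergence. The detour through Lemma~\ref{L:E} with $\nu_{z,x}=\d_{U_\d(z,x)}$ is unnecessary; feeding $\Phi$ directly into \eqref{young}, as you eventually do, is exactly what the paper does.

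The one imprecision is in your construction of $G_\d$: if the discontinuity set $E$ is infinite (possibly dense), declaring ``$G_\d=G$ outside tiny intervals around the finitely many relevant jump points'' does not yield a continuous function, since $G$ still jumps at the remaining small discontinuities outside those intervals. The paper handles this by a preliminary reduction --- writing $G=G_1-G_2$ on the relevant compact interval with $G_i$ monotone, uniformly approximating each $G_i$ by step functions whose (finitely many) jumps are placed at values $\a$ satisfying \eqref{eT:3.2}, and thereby reducing to the case of a single jump $\a_*$, where $G_\d(s):=G(s)(1-\zeta_\d(s))$ with a bump $\zeta_\d$ near $\a_*$ is trivially continuous. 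You can repair your version by a two-stage approximation: first replace the jump part of $G$ by its partial sum over the $N$ largest jumps (incurring a uniform error $<\eta$ since the jump magnitudes are summable), then smooth those $N$ jumps as you describe; send $\d\to0$ first, then $\eta\to0$. With that fix, the rest of your argument is correct and coincides with the paper's.
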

\begin{proof}  First, we observe that the values $\a\in\R$ for which \eqref{eT:3.2} does not hold, for a.e.\ $x\in\Om$, form a countable set. Indeed, since $\Om$ is bounded and 
$\mm(\KK)=1$,  for each $k\in\N$, there can be only a finite number of such values $\a\in\R$ for which
$$
\int_\Om  \mm\left(\{z\in\KK\,:\, \theta(z,x)=\a\}\right)\, dx>\frac{1}{k},
$$
and so the assertion follows. 

Since $G\in\BV_\loc(\R)$, the lateral limits $\lim_{s\to s_0\pm}G(s)$ exist, for all $s\in\R$, and, so, by \eqref{eT:3.2}, we may assume that $G$ is left-continuous. 
Also, by the properties of functions in $\BV_\loc(\R)$ ({\em cf.}, e.g., \cite{EG}), we know  that, in any compact interval of $I\subset \R$, $G$ may be written as  $G=G_1-G_2$ where $G_1$ and $G_2$ are monotone non-decreasing functions in $I$.  We may take $I$ such that $\theta(z,x)\in I$, for all $z\in\KK$, for a.e.\ $x\in\Om$. On the other hand, each $G_i$, $i=1,2$,
 is the uniform limit in $I$ of a monotone increasing sequence of piecewise constant nondecreasing functions.  The discontinuities of such piecewise constant functions may be suitably located at points $\a$ satisfying \eqref{eT:3.2}. Therefore, it suffices to prove the statement assuming that $G$ is such a  piecewise constant nondecreasing  function.  We may simplify further and consider $G$ as a piecewise constant function with only one discontinuity point, $\a_*$.  
 
 So, given $\d>0$, let us consider a continuous function $\z_\d:\R\to\R$, satisfying $0\le \z_\d\le 1$, $\z_\d(s)=1$, for $|s-\a_*|<\d$, and $\z_\d(s)=0$, for $|s-\a_*|\ge2\d$.  Let us denote $G_\d(s):=G(s)(1-\z_\d(s))$ and $U_\d(z,x):=G_\d(\theta(z,x))$. We have
$$
\lim_{\d\to 0}\int_{\Om}\int_{\KK} \z_\d(\theta(z,x))\,d\mm(z)\,dx=0,
$$
by the dominated convergence theorem, because of condition \eqref{eT:3.2}. Therefore, given $\g>0$, we may choose $\d_0>0$ sufficiently small such that, for $\d<\d_0$, 
$$
\lim_{\ve\to0}\int_{\Om} \z_\d(\theta(\frac{x}{\ve},x))\,dx=\int_{\Om}\int_{\KK}\z_\d(\theta(z,x))\,d\mm(z)\,dx<\g.
$$
Hence, 
$$
\limsup_{\ve(d)}\int_{\Om}|u_{\ve(d)}(x)-U(\frac{x}{\ve(d)},x)|\,dx\le \lim_{\ve(d)}\int_{\Om}|u_{\ve(d)}(x)-U_\d(\frac{x}{\ve(d)},x)|\,dx+\|G\|_\infty\g,
$$
and, by Theorem~\ref{T3},  since $\nu_{z,x}=\d_{U(z,x)}$, 
$$
 \lim_{\ve(d)}\int_{\Om}|u_{\ve(d)}(x)-U_\d(\frac{x}{\ve(d)},x)|\,dx=\int_{\Om}\int_{\KK}|U(z,x)-U_\d(z,x)|\,d\mm(z)\,dx<\|G\|_\infty\g,
 $$
 which gives
 $$
\limsup_{\ve(d)}\int_{\Om}|u_{\ve(d)}(x)-U(\frac{x}{\ve(d)},x)|\,dx\le2\g\|G\|_\infty.
$$
Since $\g>0$ is arbitrary, we arrive at \eqref{eT:3.2.1}, which finishes the proof. 
\end{proof}

\section{Some results about a porous medium type equation}\label{S:4}

In this section, we review some results about  an initial-boundary value problem for a porous medium type equation 
which will be used later. More specifically, let $\Om\subset \R^n$ be an open set, {\em possibly unbounded},  with smooth boundary we consider the following initial-boundary value problem
\begin{align}\label{e01}
& {\partial}_tu-\Delta f(x,u)=0, \qquad (x,t)\in Q:=\Om \times (0,+\infty),\\
& u(x,0)=u_{0}(x),\ x\in\Om, \label{e01'} \\
 & f(x,u(x,t))=0,\ (x,t)\in\po \Om\X(0,\infty). \label{e01''}
\end{align}

Concerning the initial data, we  assume 
\begin{equation}\label{eID}
u_0\in L^\infty(\Om). 
\end{equation}

For the purposes of this paper, we consider two types of functions $f(x,u)$ according 
to the following definitions.

\begin{definition}\label{D1'}
We say that the function $f(x,u)$ is  of  {\em type 1} if the conditions below are satisfied:
\begin{enumerate}
\item[{\bf(f1.1)}] $f:\bar\Om\X\R\to\R$ is continuous, for each $u\in\R$, $f(\cdot,u)$ is bounded and continuous in $\bar\Om$,  and, for each $x\in\bar\Om$, $f(x,\cdot):\re\to\re$ 
is strictly increasing and locally Lipschitz continuous uniformly in $x$. Moreover, $\lim_{u\to\pm\infty}f(x,u)=\pm\infty$, uniformly for $x\in\bar\Om$.

\item[{\bf(f1.2)}] $f(x,0)=0$ for $x\in\po\Om$. 
\end{enumerate}
\end{definition}

For the sake of example, observe that the assumptions {\bf(f1.1)} and {\bf(f1.2)} are trivially satisfied by functions of the form $f(x,u)=a(x)u|u|^{\g(x)}+b(x)$,  with  $\g,a,b$ smooth, bounded, $\g(x)>\g_0>0$,$a(x)>a_0>0$, $x\in\Om$, and $b(x)=0$,  for $x\in\po\Om$ .

We will also consider the problem \eqref{e01}-\eqref{e01''} when $f(x,u)$ is of the type described in the following definition.
\begin{definition}\label{D2'}
We say that the function $f(x,u)$ is  of  {\em type 2} if 
$f(x,u)=h(x)F(u)+S(x)$, where:
\begin{enumerate}
\item[{\bf(f2.1)}]  $F:\R\to\R$ is locally Lipschitz continuous, nondecreasing, $F(0)=0$, and $\lim_{u\to \pm \infty}F(u)=\pm\infty$. For definiteness, we assume that $F$ is {\em not} strictly increasing.  
\item[{\bf(f2.2)}] $S,h\in W^{2,\infty}(\Om)$, that is, they belong,  together with their derivatives up to second order, to $L^\infty(\Om)$, and 
$h(x)\ge \delta_0>0$, for all $x\in\Om$. 
\item[{\bf(f2.3)}] $S(x)=0$, for $x\in\po\Om$.
\end{enumerate}
\end{definition}

Observe that, for $F$ satisfying {\bf(f2.1)}   we may define $G(r)=\min\{u\,:\,F(u)=r\}$, and we have $F(G(r))=r$, for all $r\in\R$, and $G(F(u))=u$ if $u\notin F^{-1}(E)$, where 
$$
E:=\{ r\in \R\,:\, \text{$G$ is discontinuous at $r$}\}.
$$

\begin{remark} \label{R:F1} We remark that all the results obtained in what follows for $f(x,u)$ of type 2 have identical versions for $f(x,u)$ of the form $f(x,u)=F(h(x)u)+S(x)$, with $F,h,S$ satisfying the conditions in {\bf(f2.1)}, {\bf(f2.2)}, and {\bf(f2.3)}, the proofs of which  are easy adaptations of the proofs given herein for $f(x,u)$ of type 2, after the trivial change of variables $v=h(x)u$.   
\end{remark}

 \begin{remark}\label{R:F2} Concerning the homogeneous boundary condition in \eqref{e01''}, we remark that all discussion make in this section about this homogeneous problem can be immediately extended, with only minor adaptations, to apply  to the corresponding non-homogeneous problem formed replacing \eqref{e01''} by $f(x,u(x,t))=\beta(x)$, $(x,t)\in\po\Om\X(0,\infty)$, for any function $\b\in C(\bar\Om)\cap H_\loc^1(\bar\Om)$. Considering this more general boundary condition would have the convenient feature of allowing us to dispense with both assumptions {\bf(f1.2)} and {\bf(f2.3)}, which could be achieved in general by replacing a given $f(x,u)$ by another $\tilde f(x,u)=f(x,u)-\varphi(x)$, where $\varphi(x)$ is a harmonic function on $\Om$ that satisfies $\varphi(x)=f(x,0)$, for $x\in\po\Om$. 
 \end{remark}

In the case where  $f(x,u)$ is of type 1, since $f(x,\cdot)$ is (strictly) increasing, for each $x$,  then the equation \eqref{e01} is only mildly degenerate, in other words, it still belongs to the ``non-degenerate'' class, in the classification of \cite{CJ}. Nevertheless, it is degenerate in the sense that $f_u(x,\cdot)$ can vanish on a set ${\mathcal N}\subset \R$, provided ${\mathcal N}$ does not contain a non-empty open interval. The simplest and prototypical example is  the classical porous medium equation, for which $f(x,u)=u|u|^\g$, $\g>0$. We remark that for the latter, due to a comparison principle, we can always guarantee that $u(x,t)\ge0$ if $u_0(x)\ge0$,
which is physically desirable. For this reason, we can view $f(u)=u^{\g+1}$, $u\ge0$, as defined in $\R$, trivially extended as $u|u|^\g$. This motivates our choice of taking $f(x,\cdot)$ as defined in the whole $\R$, which is a matter of convenience. On the other hand, if $f(x,u)$ is of type 2, then the equation \eqref{e01} falls into the degenerate class  in the 
classification of \cite{CJ}.

The study of the well-posedness of the Cauchy problem for general quasilinear degenerate parabolic equations starts with Volpert and Hudjaev~\cite{VH}, for initial data in $BV$, where the $L^1$-stability was achieved completely only in the isotropic case, that is, for a diagonal viscosity matrix. The results in \cite{VH} were extended to the initial boundary value problem in \cite{WZ}. Well-posedness in the isotropic case with initial data in $L^\infty$ was established by Carrillo~\cite{CJ} in the homogeneous case where the coefficients do not explicitly depend on $(x,t)$. A purely $L^1$ well-posedness theory for the homogeneous anisotropic case was established by Chen and Perthame in \cite{CP}. The latter was extended to the non-homogeneous anisotropic case in~\cite{CK}.  We refer to the bibliography in the cited papers for a more complete list of references on the subject.

Equation \eqref{e01} is a particular case of a degenerate non-homegeneous isotropic equation and, as we said above, in the case where $f(x,u)$ is of type 1, its degeneration is of a mild type which makes its study a bit simpler than that of the general degenerate equation. On the other hand, in the case where $f(x,u)$ is of type 2, equation~\eqref{e01} is a particular case of a strongly degenerate parabolic equation. 
 Here we will review the  analysis of such equations for $f$ belonging to both types in order to introduce some notations and some particular results that will be needed in our study of the homogenization of porous medium type equations in Section~\ref{S:5}. For the stability results, in the type 1 case, we follow closely the analysis in \cite{CJ} and show which adaptations of the results in \cite{CJ} need to be made in order to handle the explicit dependence on $x$ of $f$. Still for the stability results, in the type 2 case, we borrow as well  some ideas from \cite{KO}, which in turn is also based on the analysis of \cite{CJ}.  
 
 For the existence of solutions, which follows from the compactness of the sequence of solutions of regularized (nondegenerate) problems, we introduce here a  method which is motivated by Kruzkhov~\cite{Kr}. We remark that recently Panov~\cite{Pa} has obtained a very general compactness result that, in particular, would imply the one proved here. However the techniques used in \cite{Pa} are out of the scope of the present paper and we think it is appropriate here to provide a simple and direct proof of this compactness result.

\begin{definition}\label{D1} A function $u\in L^\infty(Q)$ is said to be a weak solution of the problem \eqref{e01}-\eqref{e01''}, if the following hold:
\begin{enumerate}
\item[(1)] $f(x,u(x,t))\in L_\loc^2((0,\infty); H_{0,\loc}^1(\bar\Om))$;
\item[(2)] For any $\varphi\in C_c^\infty(\Om\X\R)$,  we have
\begin{equation}\label{e02}
\int\limits_{\Om\X(0,\infty)}\left\{ u(x,t)\varphi_t(x,t)-\nabla f(x,u(x,t))\cdot\nabla \varphi(x,t)\right\} \, dx\,dt+\int\limits_{\Om} u_0(x)\varphi(x,0)\,dx= 0.
\end{equation}
\end{enumerate}
\end{definition}

\begin{remark}\label{R:D1} In the case of a non-homogeneous boundary condition $f(x,u(x,t))=\beta(x)$, $(x,t)\in\po\Om\X(0,\infty)$, for some $\b\in C(\bar\Om)\cap H_\loc^1(\bar\Om)$, we only need to replace (1) in Definition~\ref{D1} by $f(x,u(x,t))-\b(x)\in L_\loc^2((0,\infty); H_{0,\loc}^1(\bar\Om))$.
\end{remark}

Let $u$ be a weak solution of  \eqref{e01}-\eqref{e01''}. Denoting by 
$\langle\cdot,\cdot\rangle$ the usual pairing between 
$H^{-1}(U)$ and $H_0^1(U)$ when $U\subset \re^n$ is open, we can 
conclude from \eqref{e02} that
$$
\partial_tu\in L_{\loc}^2((0,\infty);H_{\loc}^{-1}(\bar\Om)), 
$$
so that the equality \eqref{e02} is equivalent to 
\begin{equation}\label{e03}
\int_0^\infty\langle\partial_tu,\varphi\rangle\,dt+
\int_{Q}\nabla f(x,u)\cdot
\nabla\varphi\,dx\,dt-\int_{\Om} u_0\varphi(x,0)\,dx=0.
\end{equation}
for all $\varphi\in C_c^{\infty}(\Om\X\R)$.

Let $H_\d:\R\to\R$ be the approximation of the function $\sgn$  given by
$$
H_\d(s):=\begin{cases} 1, &\text{ for $s>\d$,}\\ \dfrac{s}\d, 
&\text{for $|s|\le \d$,}\\ -1, &\text{for $s<-\d$.}\end{cases}
$$

Given a nondecreasing Lipschitz continuous function $\vartheta:\re \to \re$ and $k\in \R$, we define

$$
B_{\vartheta}^k(x,\lambda):= \begin{cases} \int_k^{\lambda}\vartheta(f(x,r))dr, 
&\text{ if $f$ is of type $1$,}\\ 
\int_k^{\lambda}\vartheta(F(r))dr, &\text{if $f$ is of type $2$.}
\end{cases}
$$

Concerning the function $B_{\vartheta}^k$, we will make use of the following 
lemma which is a version of a lemma in~\cite{CJ}, whose proof remains 
essentially the same and for which, therefore, we refer to~\cite{CJ}, lemma~4, p.324.

\begin{lemma}\label{l01}
Let $u\in L^{\infty}(Q)$ be a weak solution 
of \eqref{e01}-\eqref{e01''}. Then, for a.e.\ $t\in (0,+\infty)$, we have
\begin{align*}
&\int_0^t \int_{\Om}B_{\vartheta}^k(x,u){\varphi}_s\,ds\,dx
+\int_{\Om}B_{\vartheta}^k(x,u_0)\varphi(x,0)\,dx
-\int_{\Om}B_{\vartheta}^k(x,u(t))\varphi(x,t)\, dx \\
&\qquad=-\int_0^t\langle{\partial}_su,\vartheta(f(x,u))\varphi\rangle\, ds
\end{align*}
$\forall k\in \re$ and $\forall \,0\le\varphi \in 
C_c^{\infty}(\Omega\times\R)$.
\end{lemma}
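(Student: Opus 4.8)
The plan is to derive the identity by testing the weak formulation with an approximation of $\vartheta(f(x,u))\varphi$ and passing to the limit, exactly as in \cite{CJ}, Lemma~4. Since $u$ is only in $L^\infty(Q)$ and $f(x,u)\in L_\loc^2((0,\infty);H^1_{0,\loc}(\bar\Om))$, the function $\vartheta(f(x,u))\varphi$ is an admissible test function in the sense of the integrated-in-time weak formulation \eqref{e03}: it lies in $L^2_\loc((0,\infty);H^1_0(U))$ for the relevant open sets $U$, and it has a compact time support away from $t=0$ once we also handle the initial layer. The first step is therefore to justify that $\langle\partial_s u,\vartheta(f(x,u))\varphi\rangle$ is well-defined for a.e.\ $s$ and integrable on $(0,t)$; this follows from $\partial_t u\in L^2_\loc((0,\infty);H^{-1}_\loc(\bar\Om))$ together with the $H^1_\loc$ regularity of $f(x,u)$ and the Lipschitz continuity of $\vartheta$.

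The core of the argument is a chain-rule / integration-by-parts identity in time. Writing $b(x,s):=B^k_\vartheta(x,u(x,s))$, the point is that, formally, $\partial_s b(x,s)=\vartheta(f(x,u))\,\partial_s u$ (in the type~1 case, since $\partial_\lambda B^k_\vartheta(x,\lambda)=\vartheta(f(x,\lambda))$; analogously with $F$ in place of $f$ in the type~2 case), so that
\begin{equation*}
\int_0^t\langle\partial_s u,\vartheta(f(x,u))\varphi\rangle\,ds
= \int_0^t\!\!\int_\Om \partial_s b(x,s)\,\varphi\,dx\,ds
= -\int_0^t\!\!\int_\Om b(x,s)\,\varphi_s\,dx\,ds + \Bigl[\int_\Om b(x,s)\varphi\,dx\Bigr]_0^t.
\end{equation*}
Rearranging gives the stated formula. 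Making this rigorous is the standard difficulty: one regularizes in time (Steklov averages $u_h$, or a mollification of $\partial_s u$), uses the fact that for the smoothed function the chain rule is classical, and then passes $h\to0$. The convergence of $\int_0^t\langle\partial_s u_h,\vartheta(f(x,u_h))\varphi\rangle\,ds$ to the right object uses that $u_h\to u$ in $L^2_\loc$, $f(x,u_h)\to f(x,u)$ in $L^2_\loc((0,\infty);H^1_\loc)$, and the continuity of $\vartheta$; the convergence of the $b$-terms uses the continuity and local boundedness of $\lambda\mapsto B^k_\vartheta(x,\lambda)$, uniformly in $x$, which follows from {\bf(f1.1)}/{\bf(f2.1)}.

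The main obstacle I expect is the time-boundary term at $s=0$, i.e.\ making sense of $\int_\Om B^k_\vartheta(x,u(x,s))\varphi(x,s)\,dx$ as $s\to0^+$ and identifying its limit with $\int_\Om B^k_\vartheta(x,u_0(x))\varphi(x,0)\,dx$. This is where the initial condition \eqref{e01'} must be used, in the weak sense encoded by \eqref{e02}: one shows that $u(\cdot,s)\to u_0$ in, say, $C([0,\infty);H^{-1}_\loc)$ or in the sense of a weak-$\ast$ essential limit, and combines this with the uniform $L^\infty$ bound on $u$ and the equicontinuity of $B^k_\vartheta(x,\cdot)$ to upgrade to convergence of the integral. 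Since this is precisely the content of \cite{CJ}, Lemma~4, and the explicit $x$-dependence of $f$ enters only through the harmless extra argument in $B^k_\vartheta$ — whose regularity in $\lambda$ is uniform in $x$ and which is measurable and bounded in $x$ — the proof carries over verbatim, and we simply refer to \cite{CJ}, Lemma~4, p.~324.
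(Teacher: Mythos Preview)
Your proposal is correct and matches the paper's treatment exactly: the paper does not give a proof either, noting that the argument is the same as \cite{CJ}, Lemma~4, p.~324, with the $x$-dependence of $f$ causing no difficulty. Your outline of the Steklov-average chain-rule argument and the handling of the initial trace is precisely the content of that reference.
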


Let us denote
\begin{align*}
\vartheta^1_{\delta}(\lambda;y)&:=H_{\delta}(\lambda-f(y,k)),\\ 
 \vartheta^2_{\delta}(\lambda)&:=H_{\delta}(\lambda-F(k)),\quad\text{and}\\
B_{\vartheta^1_\d}^k(x,\l;y)&:= B_{\vartheta^1_\d(\cdot;y)}^k(x,\l).
\end{align*}

Next we state and prove a lemma which is also an adaptation of a similar result in \cite{CJ}, lemma~5, p.329.

\begin{lemma}[Entropy production term: type~1 case]\label{l02}
Let $u\in L^\infty(Q)$ be a weak solution of the problem~\eqref{e01}-\eqref{e01''}, 
with $u_0 \in L^{\infty}(\Om)$. If $f$ is of type 1, then
\begin{align}
&\int_{Q}B_{{\vartheta}^1_{\delta}}^k(x,u;y)\varphi_t
-H_{\delta}(f(x,u)-f(y,k))\nabla f(x,u)\cdot \nabla \varphi\,dx\,dt\label{eAT2}\\
&\qquad\qquad\qquad=\int_{Q}|\nabla f(x,u)|^2 H_{\delta}'(f(x,u)-f(y,k))\varphi \,dx\,dt,\nonumber
\end{align}
for all $y\in\Om$, $k\in \R$ and all $0\le \varphi \in C_c^{\infty}(Q)$.
\end{lemma}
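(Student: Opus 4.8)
The plan is to mimic the proof of Lemma~5 in \cite{CJ}, adapting it to carry the explicit $x$-dependence of $f$. The starting point is the weak formulation in the form \eqref{e03}, together with Lemma~\ref{l01}. The idea is to take in Lemma~\ref{l01} the nondecreasing Lipschitz function $\vartheta=\vartheta^1_\delta(\cdot\,;y)=H_\delta(\cdot-f(y,k))$, so that $B_{\vartheta^1_\delta}^k(x,\lambda;y)=\int_k^\lambda H_\delta(f(x,r)-f(y,k))\,dr$. This immediately gives, for a.e.\ $t$,
\begin{align*}
&\int_0^t\int_\Om B_{\vartheta^1_\delta}^k(x,u;y)\varphi_s\,ds\,dx+\int_\Om B_{\vartheta^1_\delta}^k(x,u_0;y)\varphi(x,0)\,dx-\int_\Om B_{\vartheta^1_\delta}^k(x,u(t);y)\varphi(x,t)\,dx\\
&\qquad=-\int_0^t\langle\partial_s u,\,H_\delta(f(x,u)-f(y,k))\,\varphi\rangle\,ds,
\end{align*}
for $0\le\varphi\in C_c^\infty(\Om\times\R)$. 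Since we want a statement with $\varphi\in C_c^\infty(Q)$ (so no boundary-in-time terms) and integration over all of $Q$, I would let $t\to\infty$ using that $\varphi$ has compact support in $Q$, which kills the $t$-slice term and the $u_0$ term, leaving $\int_Q B_{\vartheta^1_\delta}^k(x,u;y)\varphi_t\,dx\,dt=-\int_0^\infty\langle\partial_s u,H_\delta(f(x,u)-f(y,k))\varphi\rangle\,ds$.

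The heart of the matter is then to rewrite the right-hand side $-\int\langle\partial_s u, H_\delta(f(x,u)-f(y,k))\varphi\rangle$ as the sum of the flux term $-\int_Q H_\delta(f(x,u)-f(y,k))\nabla f(x,u)\cdot\nabla\varphi$ and the nonnegative entropy production term $\int_Q|\nabla f(x,u)|^2 H_\delta'(f(x,u)-f(y,k))\varphi$. This is exactly the standard chain-rule/integration-by-parts computation: one uses $\partial_t u\in L^2_{\loc}((0,\infty);H^{-1}_{\loc}(\bar\Om))$ and $f(x,u)\in L^2_{\loc}((0,\infty);H^1_{0,\loc}(\bar\Om))$, and pairs $\partial_s u$ against the $H^1_0$ test function $H_\delta(f(x,u)-f(y,k))\varphi$, integrating the spatial term by parts. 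Crucially, $\nabla\big(H_\delta(f(x,u)-f(y,k))\varphi\big)=H_\delta'(f(x,u)-f(y,k))\nabla f(x,u)\,\varphi+H_\delta(f(x,u)-f(y,k))\nabla\varphi$, since $y$ and $k$ are frozen parameters and therefore $\nabla_x f(y,k)=0$; this is where the explicit $x$-dependence causes no trouble, precisely because the "comparison level" $f(y,k)$ is evaluated at the fixed point $y$, not at the running variable $x$. Then $-\int\langle\partial_s u, H_\delta(f(x,u)-f(y,k))\varphi\rangle=\int_Q\nabla f(x,u)\cdot\nabla\big(H_\delta(f(x,u)-f(y,k))\varphi\big)\,dx\,dt$, and distributing the gradient produces the two asserted terms.

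The justification that needs care — and the step I expect to be the main obstacle — is the rigorous manipulation of the time-derivative term, i.e.\ that $H_\delta(f(x,u)-f(y,k))\varphi$ is an admissible test function in the $H^{-1}$--$H^1_0$ pairing and that the pairing can be integrated "by parts in time" against $B_{\vartheta^1_\delta}^k$; this is exactly the content that Lemma~\ref{l01} was designed to encapsulate (it is the $x$-dependent analogue of \cite{CJ}, Lemma~4), so in practice the argument is: apply Lemma~\ref{l01}, then do the spatial integration by parts in the right-hand side. One must also check the local integrability of all terms: $B_{\vartheta^1_\delta}^k(x,u;y)$ is bounded (since $|H_\delta|\le1$ and $u\in L^\infty$), $\nabla f(x,u)\in L^2_{\loc}$, and $H_\delta'\le 1/\delta$ is bounded, so with $\varphi\in C_c^\infty(Q)$ every integral converges absolutely; the compact support of $\varphi$ in the open set $Q$ makes the passage $t\to\infty$ and the vanishing of boundary terms immediate. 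Assembling these pieces yields \eqref{eAT2}. I would also remark that, for $f$ of type~1, $H_\delta'\ge0$ makes the production term manifestly nonnegative, which is what makes this identity useful for the later entropy/uniqueness analysis.
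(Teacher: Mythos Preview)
Your proof is correct and follows essentially the same route as the paper: apply Lemma~\ref{l01} with $\vartheta=H_\delta(\cdot-f(y,k))$ to handle the time derivative, use the weak formulation \eqref{e03} with the $H^1_0$ test function $H_\delta(f(x,u)-f(y,k))\varphi$, and expand $\nabla\big(H_\delta(f(x,u)-f(y,k))\varphi\big)$ by the chain rule. Your additional remarks (why the $t$-slice and $u_0$ terms vanish for $\varphi\in C_c^\infty(Q)$, why $\nabla_x f(y,k)=0$, and the integrability checks) are all correct elaborations of steps the paper leaves implicit.
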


\begin{proof}
By the Lemma~\ref{l01}, we have
$$-\int_0^{+\infty} \langle {\partial}_tu, H_{\delta}(f(x,u)-f(y,k))\varphi 
\rangle \,dt=\int_{Q}B_{{\vartheta}_{\delta}}^k(x,u;\mu) \varphi_t
\,dx\,dt.$$
Since $u$ is a weak solution and $H_{\delta}(f(x,u)-f(y,k))\varphi$ is a 
test function for each fixed $y$ and $k$, we get
\begin{equation*}
-\int_0^{+\infty}\langle {\partial}_tu,H_{\delta}(f(x,u)-f(y,k))
\varphi\rangle\,dt-\int_{Q}\{\nabla f(x,u)\cdot \nabla(H_{\delta}(f(x,u)-f(y,k))\varphi)\}\,dx\,dt=0.
\end{equation*}
This equality with the previous one gives
\begin{equation*}
\int_{Q}\{B_{{\vartheta}^1_{\delta}}^k(x,u;y)\varphi_t
-\nabla f(x,u)\cdot \nabla(H_{\delta}(f(x,u)-f(y,k))\varphi)\} \,dx\,dt=0,
\end{equation*}
and this yields \eqref{eAT2}.
\end{proof}

Now, we establish a result which is the analogue of Lemma~\ref{l02} for 
the case where $f$ is of type 2.

\begin{lemma}[Entropy production term: type  2 case]\label{l02'}
Let $u\in L^\infty(Q)$ be a weak solution of \eqref{e01}-\eqref{e01''} with 
$u_0\in L^{\infty}(\Om)$. If $f$ is of type $2$, then
\begin{align}
&\int_{Q}\big\{|u-k|\varphi_t-\nabla |f(x,u)-f(x,k)|\cdot \nabla 
\varphi - \sgn(u-k)\Delta f(x,k)\varphi\big\}\,dx\,dt\label{eAT2'}\\
&\qquad\qquad\qquad=\lim_{\delta\to 0}\int_{Q}h(x)|\nabla F(u)|^2 
H_{\delta}'(F(u)-F(k))\varphi \,dx\,dt,\nonumber
\end{align}
for all $k\in\R$ such that $F(k)\notin E$ and 
$0\le\varphi\in C^{\infty}_c(Q)$.
\end{lemma}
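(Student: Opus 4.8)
The plan is to mimic the proof of Lemma~\ref{l02}, replacing the smooth entropy $B^k_{\vartheta^1_\d}$ by the Kruzkhov-type entropy built from $\vartheta^2_\d(\l)=H_\d(\l-F(k))$, and then pass to the limit $\d\to0$. First I would apply Lemma~\ref{l01} with $\vartheta=\vartheta^2_\d$ and $k$ such that $F(k)\notin E$, which gives
\begin{equation*}
-\int_0^\infty\langle\po_su,H_\d(F(u)-F(k))\varphi\rangle\,ds=\int_Q B^k_{\vartheta^2_\d}(x,u)\varphi_s\,dx\,ds
-\int_\Om B^k_{\vartheta^2_\d}(x,u_0)\varphi(x,0)\,dx,
\end{equation*}
and the last term vanishes for $\varphi\in C_c^\infty(Q)$. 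Here $B^k_{\vartheta^2_\d}(x,\l)=\int_k^\l H_\d(F(r)-F(k))\,dr$, which, as $\d\to0$, converges pointwise and boundedly to $|u-k|$ because $F$ is nondecreasing and (using $F(k)\notin E$, i.e.\ $k$ is the least preimage, so $F(r)>F(k)$ for $r>k$ and $F(r)\le F(k)$ for $r<k$ with equality only on a set where the integrand of the limit is $\sgn(u-k)=-1$ anyway on a null contribution) the sign of $F(r)-F(k)$ agrees with $\sgn(r-k)$ for a.e.\ $r$.

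Next I would use that $u$ is a weak solution, testing \eqref{e03} with the admissible test function $H_\d(F(u)-F(k))\varphi$ (noting $f(x,u)=h(x)F(u)+S(x)$, so $f(x,u)$ and hence $F(u)$ lie in $L^2_\loc((0,\infty);H^1_{0,\loc}(\bar\Om))$ by condition~(1) of Definition~\ref{D1} together with (f2.2)). This produces
\begin{equation*}
-\int_0^\infty\langle\po_tu,H_\d(F(u)-F(k))\varphi\rangle\,dt-\int_Q\nabla f(x,u)\cdot\nabla\big(H_\d(F(u)-F(k))\varphi\big)\,dx\,dt=0.
\end{equation*}
Combining the two identities and expanding the gradient,
$\nabla(H_\d(F(u)-F(k))\varphi)=H_\d'(F(u)-F(k))(\nabla F(u)-\nabla F(k))\varphi+H_\d(F(u)-F(k))\nabla\varphi$,
and writing $\nabla f(x,u)=h(x)\nabla F(u)+F(u)\nabla h+\nabla S$, $\nabla f(x,k)=F(k)\nabla h+\nabla S$, one isolates the production term $\int_Q h(x)|\nabla F(u)|^2H_\d'(F(u)-F(k))\varphi$. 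The remaining $H_\d'$ terms, those multiplied by $\nabla F(u)\cdot\big(F(u)\nabla h+\nabla S-F(k)\nabla h\big)=\nabla F(u)\cdot\big((F(u)-F(k))\nabla h+\nabla S\big)$, should be integrated by parts / rewritten as $\di$ of something bounded times $H_\d$ so that they converge, as $\d\to0$, to the term $-\sgn(u-k)\Delta f(x,k)\varphi$ in \eqref{eAT2'}; note $\Delta f(x,k)=F(k)\Delta h+\Delta S=\Delta f(x,k)$ makes sense since $h,S\in W^{2,\infty}(\Om)$ by (f2.2). The $H_\d$ (not $H_\d'$) flux term $\int_Q H_\d(F(u)-F(k))\nabla f(x,u)\cdot\nabla\varphi$ converges to $\int_Q\sgn(F(u)-F(k))\nabla f(x,u)\cdot\nabla\varphi=\int_Q\nabla|f(x,u)-f(x,k)|\cdot\nabla\varphi$, using that $\nabla|f(x,u)-f(x,k)|=\sgn(f(x,u)-f(x,k))\nabla(f(x,u)-f(x,k))$ a.e.\ and that $f(x,k)$ has the right regularity.

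The main obstacle I anticipate is the rigorous justification of the limits as $\d\to0$ in the terms involving $H_\d'$ and the matching with $-\sgn(u-k)\Delta f(x,k)\varphi$: one must show that the ``spurious'' $H_\d'$ contributions coming from the $x$-dependence of $f$ (the $\nabla h$, $\nabla S$ pieces) do not blow up and in fact collapse onto the claimed $\Delta f(x,k)$ term. The clean way is to recognize $\int_Q H_\d'(F(u)-F(k))\nabla F(u)\cdot\big((F(u)-F(k))\nabla h+\nabla S\big)\varphi$ as $\int_Q\nabla\big(\Xi_\d(F(u)-F(k))\big)\cdot(\cdots)$-type expressions or, more simply, to move all derivatives off $u$ before passing to the limit — i.e.\ integrate by parts so that only $H_\d(F(u)-F(k))\to\sgn(F(u)-F(k))=\sgn(u-k)$ (for a.e.\ point, again using $F(k)\notin E$) appears, multiplied by $\Delta f(x,k)\varphi$ and $\nabla f(x,k)\cdot\nabla\varphi$, the latter reassembling into the $\sgn(u-k)\Delta f(x,k)\varphi$ term after one more integration by parts. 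The production term on the right is manifestly nonnegative in $\d$ along a suitable representative, so the $\d\to0$ limit exists (possibly $+\infty$ a priori, but finiteness follows a posteriori from the identity since every other term has a finite limit), and equals the right-hand side of \eqref{eAT2'} by definition. This requires only $u_0\in L^\infty(\Om)$ and (f2.1)--(f2.3), exactly as assumed, together with the condition $F(k)\notin E$ to guarantee $\sgn(F(u)-F(k))=\sgn(u-k)$ a.e.
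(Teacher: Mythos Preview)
Your overall strategy coincides with the paper's: apply Lemma~\ref{l01} with $\vartheta^2_\d$, test the weak formulation with $H_\d(F(u)-F(k))\varphi$, isolate the production term, and pass $\d\to0$. Two points of bookkeeping need correction, and the paper's organization of the residual terms is cleaner.

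First, your flux limit is wrong as written: $\sgn(F(u)-F(k))\nabla f(x,u)\cdot\nabla\varphi$ is \emph{not} $\nabla|f(x,u)-f(x,k)|\cdot\nabla\varphi$; the latter equals $\sgn(u-k)\big(\nabla f(x,u)-\nabla f(x,k)\big)\cdot\nabla\varphi$, and the missing $-\sgn(u-k)\nabla f(x,k)\cdot\nabla\varphi$ has to come from elsewhere. Second (and related), your $H_\d'$ residual is miscounted: the direct expansion produces $\nabla F(u)\cdot\big(F(u)\nabla h+\nabla S\big)H_\d'\varphi$, and you have silently subtracted an $F(k)\nabla h$ piece without saying where it went. (Minor: $F(k)\notin E$ means $F^{-1}(\{F(k)\})=\{k\}$, not that $k$ is the least preimage.)

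The paper resolves both issues at once by splitting $\nabla f(x,u)=\nabla\big(f(x,u)-f(x,k)\big)+\nabla f(x,k)$ \emph{before} expanding $\nabla f(x,u)\cdot\nabla(H_\d\varphi)$. The piece $\nabla f(x,k)\cdot\nabla(H_\d\varphi)$ is integrated by parts as a whole (legitimate since $f(\cdot,k)\in W^{2,\infty}$ by {\bf(f2.2)}), producing $-H_\d\,\Delta f(x,k)\,\varphi$ directly with no $H_\d'$ contribution; this passes to $-\sgn(u-k)\Delta f(x,k)\varphi$. The other piece gives the flux term $H_\d\,\nabla(f(x,u)-f(x,k))\cdot\nabla\varphi$ (now matching the target) plus a \emph{single} $H_\d'$ residual
\[
I_\d=\int_Q (F(u)-F(k))\,\nabla h\cdot\nabla F(u)\,H_\d'(F(u)-F(k))\,\varphi\,dx\,dt,
\]
with the crucial small factor $(F(u)-F(k))$ on the support of $H_\d'$. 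Writing the integrand as $\nabla h\cdot\nabla_x\GG_\d(F(u))\,\varphi$ with $\GG_\d(z)=\int_{F(k)}^z(r-F(k))H_\d'(r-F(k))\,dr$ (so $\GG_\d\to0$ pointwise and boundedly) and integrating by parts once more shows $I_\d\to0$. Your proposed fix (move derivatives off $u$ before the limit) would also close the argument, but requires reinstating the missing pieces and several reassemblies.
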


\begin{proof}
Similarly to what was done in Lemma~\ref{l02}, we have
$$
\int_{Q}\{B_{{\vartheta}^2_{\delta}}^k(u)\varphi_t
-\nabla f(x,u)\cdot \nabla(H_{\delta}(F(u)-F(k))\varphi)\} \,dx\,dt=0,
$$
which gives
\begin{align*}
&\int_{Q}\bigg\{ B_{{\vartheta}^2_{\delta}}^k(u)\varphi_t
-H_{\delta}(F(u)-F(k))\nabla \left(f(x,u)-f(x,k)\right)\cdot\nabla\varphi 
+H_{\delta}(F(u)-F(k))\Delta f(x,k)\,\varphi\bigg\}\,dx\,dt\\
&\qquad=\int_{Q}\bigg\{\left(F(u)-F(k)\right)\nabla h(x)\cdot 
\nabla F(u) +h(x)|\nabla F(u)|^2\bigg\} H_{\delta}'(F(u)-F(k))\varphi\,dx\,dt.
\end{align*}
Since $F(k)\notin E$, we obtain that $H_{\delta}(F(u)-F(k))\to \sgn(u-k)$ and 
$B_{{\vartheta}^2_{\delta}}^k(u)\to |u-k|$ as $\delta\to 0$. So, 
in order to obtain \eqref{eAT2'}, it suffices to show that 
the first integral on the right-hand side of the expression above goes to 
$0$ as $\delta\to 0$. For this, define
$$
I_\delta:= \int_{Q}\left(F(u)-F(k)\right)\nabla h(x)\cdot 
\nabla F(u) H_{\delta}'(F(u)-F(k))\varphi\,dx\,dt.
$$
A simple computation shows that
$$
I_\delta:= \int_{Q}\div\mathscr{F}_{\delta}(F(u))\varphi \,dx\,dt -\int_{Q}\Delta h\GG_\d(F(u))\varphi\,dx\,dt,
$$
where 
\begin{equation*}
\begin{aligned}
&\mathscr{F}_{\delta}(z):=\nabla h(x)\int_{F(k)}^z(r-F(k))
H_{\delta}'(r-F(k))\,dr,\\
&\GG_{\delta}(z):=\int_{F(k)}^z(r-F(k))
H_{\delta}'(r-F(k))\,dr.
\end{aligned}
\end{equation*}
Since $\lim_{\delta\to 0}\mathscr{F}_{\delta}(z)=0$ and  $\lim_{\delta\to 0}\GG_{\delta}(z)=0$   for all $z$, we have 
$\lim_{\delta\to 0} I_{\delta}=0$. 
\end{proof}

\begin{definition}\label{D2''} 
\begin{enumerate} 
\item[(i)] If $f(x,u)$ is of type~1, a function $u\in L^{\infty}(Q)$ is an {\em entropy solution} of the 
problem~\eqref{e01}-\eqref{e01''} if $u$ is just a weak solution of the same problem. 
\item[(ii)] If $f(x,u)$ is of type~2, a function $u\in L^{\infty}(Q)$ is an {\em entropy solution} of the 
problem~\eqref{e01}-\eqref{e01''} if $u$ is a weak solution and satisfies, 
for all $0\le \varphi\in C_c^\infty(\Om\X(0,\infty))$ and $k\in\R$, 
\begin{equation}\label{eD2''}
\int_Q\left\{ |u-k|\varphi_t- \nabla |f(x,u)-f(x,k)|\cdot \nabla \varphi -\sgn(u-k)\,\Delta f(x,k) \varphi\right\}\,dx\,dt \ge 0.
\end{equation}
\end{enumerate}
\end{definition}

The following theorem is a central tool in our analysis of the homogenization problem for porous medium type equation in Sections~\ref{S:5} and \ref{S:6}.  Its proof  follows from \eqref{eAT2}, by using doubling of 
variables, and the trick of completing the square in \cite{CJ}, theorem~13, p.~339.  Of particular importance for our homogenization study in Section~\ref{S:5} will be the formula
\eqref{eTA3'} below, which holds in the special case when one of the entropy solutions is stationary. 
We give the detailed proof here for the reader's convenience.

\begin{theorem}\label{T:A3} 
Let $u_1,u_2$ be entropy solutions  of the problem  \eqref{e01}-\eqref{e01''} with initial data $u_{01},u_{02}\in L^\infty(\Om)$. Then we have the following:
\begin{enumerate}
\item [(i)] For all $0\le \varphi\in C_c^\infty(Q)$, we have
\begin{equation}\label{eTA3}
\int_{Q} |u_1(x,t)-u_2(x,t)|\varphi_t-\nabla|f(x,u_1(x,t))-f(x,u_2(x,t))|
\cdot\nabla\varphi\,dx\,dt\ge 0.
\end{equation}

\item [(ii)] If $u_2$ is a stationary solution, then
\begin{align}\label{eTA3'}
&\int_{Q} |u_{1}(x,t)-u_2(x)|
\varphi_{t}-
\nabla|f(x,u_{1}(x,t))-f(x,u_2(x))|
\cdot\nabla\varphi \,dx\,dt\\
&\qquad\qquad=\lim_{\d\to0}\int_{Q}|\nabla[f(x,u_{1}(x,t))-
f(x,u_2(x))]|^2H_{\delta}'
(f(x,u_{1}(x,t))-f(x,u_2(x)))\varphi \,dx\,dt,\nonumber
\end{align}
for all $0\le \varphi \in C^{\infty}_c(Q)$.
\end{enumerate}

\end{theorem}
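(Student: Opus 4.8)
The plan is to follow the classical Kruzhkov doubling-of-variables scheme as adapted to degenerate parabolic equations in \cite{CJ}, theorem~13, using the entropy production identities of Lemma~\ref{l02} (type~1) and Lemma~\ref{l02'} (type~2) as the starting point. First I would treat the two solutions symmetrically: write the entropy inequality/identity for $u_1$ in the variables $(x,t)$ with the constant $k$ frozen at a value depending on the second copy of variables, and analogously for $u_2$ in variables $(y,s)$, testing against a product-type function $\varphi(x,t)\psi_\rho(x-y)\phi_\lambda(t-s)$, where $\psi_\rho$ and $\phi_\lambda$ are the usual mollifiers in space and time. Adding the two resulting expressions and passing to the limit $\rho,\lambda\to0$ collapses the double integral onto the diagonal $x=y$, $t=s$; the diffusion terms combine, after the chain rule, into the single term $\nabla|f(x,u_1)-f(x,u_2)|\cdot\nabla\varphi$ because $H_\delta$ is odd and $H_\delta'\ge0$. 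This yields \eqref{eTA3} for each fixed $\delta>0$ up to a nonnegative ``entropy production'' remainder of the form $\int_Q |\nabla[f(x,u_1)-f(x,u_2)]|^2 H_\delta'(f(x,u_1)-f(x,u_2))\varphi\,dx\,dt\ge0$; dropping it and letting $\delta\to0$ gives item~(i). In the type~2 case the only difference is the extra lower-order term $-\sgn(u-k)\Delta f(x,k)\varphi$ coming from the nonconstant coefficients $h,S$; its contributions from the two copies cancel in the limit on the diagonal (both become $-\sgn(u_1-u_2)\Delta_x f(x,u_2)\varphi$ against which one integrates by parts consistently), again leaving the same inequality \eqref{eTA3}. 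One must also handle the exceptional null set $E$ where $G$ is discontinuous: since $\{k : F(k)\in E\}$ is where Lemma~\ref{l02'} fails, one first proves the inequality for $u_2$ replaced by values avoiding that set and then passes to the limit using that $F(u_2)\notin E$ a.e.\ (or approximates $k$ suitably), which is routine.

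For item~(ii) the key extra input is that a stationary solution $u_2=u_2(x)$ has $\partial_t u_2=0$, so there is no time mollification needed against the second copy — one only doubles the spatial variable. More importantly, the entropy production term that was merely dropped in (i) is now retained as an \emph{equality}: when $u_2$ is stationary, the same computation that produces \eqref{eTA3} from \eqref{eAT2} (or from \eqref{eAT2'}) does not involve adding two one-sided inequalities but rather subtracting the stationary identity for $u_2$ (which is exact, since $u_2$ satisfies the equation in the weak sense with $\partial_t u_2=0$) from the identity \eqref{eAT2} for $u_1$, evaluated at the frozen constant $k=u_2(x)$ after the spatial mollification limit. The right-hand side of \eqref{eAT2} is itself an equality, namely $\int_Q|\nabla f(x,u_1)|^2 H_\delta'(f(x,u_1)-f(y,k))\varphi$; after the diagonal limit $y\to x$ and the substitution $k\rightsquigarrow u_2(x)$, and noting $\nabla_x[f(x,u_2(x))]$ appears symmetrically, this becomes exactly the stated $\int_Q|\nabla[f(x,u_1)-f(x,u_2)]|^2 H_\delta'(f(x,u_1)-f(x,u_2))\varphi$, and one then takes $\delta\to0$. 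The point is simply that with a stationary comparison function one never throws anything away.

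The main obstacle, I expect, is the careful bookkeeping of the lower-order and coefficient terms in the type~2 case and the passage to the diagonal in the doubling argument when $f$ depends explicitly on $x$. Specifically, in \eqref{eAT2} the constant is $f(y,k)$ (the ``$y$'' coming from the second set of variables), and one must verify that after mollifying in $x-y$ and sending $\rho\to0$ the term $H_\delta(f(x,u_1)-f(y,k))\nabla_x f(x,u_1)\cdot\nabla_x\varphi$ converges, together with the corresponding $u_2$-term, to $\nabla_x|f(x,u_1)-f(x,u_2)|\cdot\nabla_x\varphi$ — this requires that $f(\cdot,u)\in H^1_{\loc}$ uniformly and the continuity of $f$ in $x$ from {\bf(f1.1)} resp.\ {\bf(f2.2)}, plus the fact that $\nabla|F(u_1)-F(u_2)|$ is well-defined via the chain rule for $\BV$/Sobolev compositions. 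In the type~2 case the additional subtlety is that $f(x,k)=h(x)F(k)+S(x)$ contributes a genuine second-order term $\Delta f(x,k)$, whose handling uses $h,S\in W^{2,\infty}$; one must check that the ``$\Delta h$'' and ``$\nabla h$'' correction terms that appeared already in the proof of Lemma~\ref{l02'} (the integrals $I_\delta$, $\mathscr F_\delta$, $\GG_\delta$) vanish as $\delta\to0$ uniformly in the doubled variables, which follows from the bounds established there. Everything else — the choice of mollifiers, the standard Kruzhkov limit, completing the square to see nonnegativity — is by now routine and I would merely cite \cite{CJ}, theorem~13, for the details not spelled out.
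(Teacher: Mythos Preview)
Your overall strategy is correct and matches the paper: doubling of variables \`a la Kruzhkov--Carrillo, using Lemmas~\ref{l02}--\ref{l02'} as the starting identities, adding cross-terms to complete the square so that the entropy production becomes $|(\nabla_x+\nabla_y)(f(x,u_1)-f(y,u_2))|^2 H_\delta'(\cdots)\ge0$, and then collapsing to the diagonal. The type~1 case and the general shape of the argument are fine.

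There is, however, a genuine gap in your treatment of the type~2 case. The two $\Delta f(\cdot,k)$ terms do \emph{not} both become $-\sgn(u_1-u_2)\Delta_x f(x,u_2)\varphi$: the contribution from $u_2$'s inequality, with $k=u_1$, gives $+\sgn(u_1-u_2)(\Delta f)(y,u_1)$, so on the diagonal the combined term is $-\sgn(u_1-u_2)[(\Delta f)(x,u_2)-(\Delta f)(x,u_1)]$. This does not vanish and does not integrate by parts away; in the paper it is cancelled by a correction term (called $I_3$) that arises because the natural gradient after the cross-term manipulation involves $h(y)\nabla_y|F(u_1)-F(u_2)|$ and $h(x)\nabla_x|F(u_1)-F(u_2)|$, not the full $\nabla|f(x,u_1)-f(x,u_2)|$. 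One must also track additional $(h(x)-h(y))$-corrections ($I_1^\delta$, $I_2$) and show they vanish as the mollifier concentrates; this uses $h\in W^{2,\infty}$ and the bound $|x-y|\lesssim 1/k$. Separately, your handling of the exceptional set $E$ is not correct: it is \emph{not} true that $F(u_2)\notin E$ a.e.\ (the solution may sit on a plateau of $F$ on a set of positive measure), so Lemma~\ref{l02'} genuinely fails there. The paper splits $Q$ into $E_i=\{F(u_i)\in E\}$ and its complement, uses the entropy \emph{inequality} \eqref{eD2''} on $E_2$ (respectively $E_1$) and the \emph{identity} of Lemma~\ref{l02'} off it, and exploits $\nabla F(u_i)=0$ a.e.\ on $E_i$ to reconcile the pieces. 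This splitting is essential, not ``routine''.

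For part~(ii), your idea of doubling only in space is a valid alternative, but the paper's device is slicker: it keeps the full $(x,t,y,s)$-doubling equality \eqref{E7} and simply observes that, when $u_2$ is stationary, both $\int B^{u_1}(y,u_2;x)\phi_s$ and $\int B^{u_2}(x,u_1;y)\phi_s$ vanish (neither integrand depends on $s$), so one may swap them and obtain $B^{u_2}(x,u_1;y)(\phi_t+\phi_s)$ in place of the mixed time term. The diagonal limit then gives the stated equality directly, with no new bookkeeping.
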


\begin{proof}
1.  In what follows,  we use the abridged notation  $u_1=u_1(x,t)$  and 
$u_2=u_2(y,s)$. We begin by proving \eqref{eTA3} in the case where $f$ is of type 1.  For this,  we apply  \eqref{eAT2}  to $u_1$,  to obtain
\begin{align*}
&\int_{Q}\{B_{{\vartheta}_{\delta}^1}^{k}(x,u_1;y)
{\phi}_t-H_{\delta}(f(x,u_1)-f(y,k)){\nabla}_xf(x,u_1)\cdot {\nabla}_x \phi \}\,dx\,dt 
\nonumber\\
&\qquad\qquad\qquad
=\int_{Q}|{\nabla}_xf(x,u_1)|^2H_{\delta}'(f(x,u_1)-f(y,k))
\phi\,dx\,dt,
\end{align*}
for all $k\in \re$ and for all $0\le \phi \in C_c^\infty(Q^2)$. 
Setting $k=u_2$ and integrating in $y,s$, we obtain
\begin{align}\label{E1}
&\int_{Q^2}\{B_{{\vartheta}_{\delta}^1}^{u_2}(x,u_1;y)
{\phi}_t 
-H_{\delta}(f(x,u_1)-f(y,u_2)){\nabla}_xf(x,u_1)\cdot {\nabla}_x \phi \,dx\,dt\,dy\,ds \nonumber\\
&=\int_{Q^2}
|{\nabla}_xf(x,u_1)|^2H_{\delta}'(f(x,u_1)-f(y,u_2))\phi\,dx\,dt\,dy\,ds.
\end{align}
Now, applying \eqref{eAT2} to $u_2$, taking $k=u_1$ and integrating in 
$x,t$, we obtain
\begin{align}\label{E2}
&&\int_{Q^2}\bigg{\{}B_{{\vartheta}_{\delta}^1}^{u_1}(y,u_2;x)
{\phi}_s+
H_{\delta}(f(x,u_1)-f(y,u_2)){\nabla}_yf(y,u_2)\cdot
{\nabla}_y\phi\bigg{\}}\,dx\,dt\,dy\,ds \nonumber\\
&&=\int_{Q^2}|{\nabla}_yf(y,u_2)|^2 
H_{\delta}'(f(x,u_1)-f(y,u_2))\phi\,dx\,dt\,dy\,ds
\end{align}
Now, we note that
\begin{align*}
0&=\int_{Q}{\nabla}_yf(y,u_2)\cdot {\nabla}_x[H_{\delta}
(f(x,u_1)-f(y,u_2))\phi]\,dx\,dt \nonumber\\
&=\int_{Q}\bigg{\{}{\nabla}_yf(y,u_2)\cdot {\nabla}_xf(x,u_1)H_{\delta}'(
f(x,u_1)-f(y,u_2))\phi+H_{\delta}(f(x,u_1)-f(y,u_2))
{\nabla}_yf(y,u_2)\cdot {\nabla}_x\phi\bigg{\}}\,dx\,dt
\end{align*}
and so we have
\begin{align}\label{E3}
&\int_{Q^2}H_{\delta}(f(x,u_1)-f(y,u_2))
{\nabla}_yf(y,u_2)\cdot 
{\nabla}_x\phi\,dx\,dt\,dy\,ds\nonumber\\ 
&
= -\int_{Q^2}{\nabla}_yf(y,u_2)\cdot {\nabla}_x
f(x,u_1)H_{\delta}'(f(x,u_1)-f(y,u_2))\phi\,dx\,dt\,dy\,ds
\end{align}
Analogously,
\begin{align}\label{E4}
&\int_{Q^2}H_{\delta}(f(x,u_1)-f(y,u_2))
{\nabla}_xf(x,u_1)\cdot{\nabla}_y\phi\,dx\,dt\,dy\,ds \nonumber\\
&
=\int_{Q^2}{\nabla}_yf(y,u_2)\cdot
{\nabla}_xf(x,u_1)H_{\delta}'(f(x,u_1)-f(y,u_2))\phi\,dx\,dt\,dy\,ds
\end{align}
Making  \eqref{E1} minus \eqref{E4} yields
\begin{align}\label{E5}
&\int_{Q^2}\bigg{\{}B_{{\vartheta}_{\delta}^1}^{u_2}(x,u_1;y)
{\phi}_t-H_{\delta}(f(x,u_1)-f(y,u_2)){\nabla}_xf(x,u_1)\cdot
({\nabla}_x+{\nabla}_y)\phi\bigg{\}}\,dx\,dt\,dy\,ds\nonumber\\
&=\int_{Q^2}\bigg{\{}|{\nabla}_xf(x,u_1)|^2
-{\nabla}_xf(x,u_1)\cdot {\nabla}_yf(y,u_2)\bigg{\}}H_{\delta}'(f(x,u_1)-f(y,u_2))\phi
\,dx\,dt\,dy\,ds
\end{align}
Further,  adding  \eqref{E2} and  \eqref{E3} gives
\begin{align}\label{E6}
&\int_{Q^2}\bigg{\{}B_{{\vartheta}_{\delta}^1}^{u_1}(y,u_2;x)
{\phi}_s+H_{\delta}(f(x,u_1)-f(y,u_2)){\nabla}_yf(y,u_2)\cdot
({\nabla}_x+{\nabla}_y)\phi\bigg{\}}\,dx\,dt\,dy\,ds\nonumber\\
&=\int_{Q^2}\bigg{\{}|{\nabla}_yf(y,u_2)|^2
-{\nabla}_xf(x,u_1)\cdot {\nabla}_yf(y,u_2)\bigg{\}}
H_{\delta}'(f(x,u_1)-f(y,u_2))\phi\,dx\,dt\,dy\,ds.
\end{align}
Now, adding \eqref{E5} and \eqref{E6} we obtain
\begin{align}\label{E7}
&\int_{Q^2}\bigg{\{}
B_{{\vartheta}_{\delta}^1}^{u_2}(x,u_1;y){\phi}_t+
B_{{\vartheta}_{\delta}^1}^{u_1}(y,u_2;x){\phi}_s\nonumber\\
&-H_{\delta}(f(x,u_1)-f(y,u_2))(\nabla_x+\nabla_y)(f(x,u_1)-f(y,u_2))\cdot(\nabla_x+\nabla_y)\phi\bigg{\}}\,dx\,dt\,dy\,ds\nonumber\\
&
=+\int_{Q^2}|(\nabla_x+\nabla_y)(f(x,u_1)-f(y,u_2))|^2
H_{\delta}'(f(x,u_1)-f(y,u_2))\phi\,dx\,dt\,dy\,ds.
\end{align}
We then use test functions as $\phi(x,t,y,s):= \varphi(\frac{x+y}{2},\frac{t+s}{2})
\rho_k (\frac{x-y}{2})\theta_l(\frac{t-s}{2})$, where $0\le \varphi \in C_c^{\infty}(Q)$, and $\rho_k,\ \theta_l$ are classical approximations of the identity in 
$\re^n$ and $\R$, respectively, as in the doubling of variables method. 
Hence, letting $k\to\infty$ first, later $\delta\to 0$ and then letting $l\to\infty$, we obtain~\eqref{eTA3} for $f$ of type 1.

2. Now, assume that the function $f$ is of type 2 and define the sets 
$$E_1:=\left\{(x,t)\in Q: F(u_1(x,t))\in E\right\}
\text{ and }
E_2:=\left\{(y,s)\in Q: F(u_2(y,s))\in E\right\}.$$ 
Observe that 
\begin{equation}\label{ch}
\sgn(u_1-u_2)=\sgn(F(u_1)-F(u_2)),
\end{equation}
for all $(x,t,y,s)\in\left\{\left(Q\setminus E_1\right)
\times Q\right\}
\cup\left\{Q\times \left(Q\setminus E_2\right)\right\}$.
Moreover,
\begin{equation}\label{ch1}
\nabla_xF(u_1)=0,\qquad\text{a.e.\ in $E_1$,}
\end{equation}
\begin{equation}\label{ch2}
\nabla_yF(u_2)=0,\qquad\text{a.e.\ in $E_2$.}
\end{equation}
Let $\phi$ be as in step 1. Using the Definition~\ref{D2''}, taking 
$k=u_2$ and integrating over $E_2$, we get
\begin{align}\label{Deg1}
&\int_{Q\times E_2}\big\{|u_1-u_2|\phi_t-\nabla_x
|f(x,u_1)-f(x,u_2)|\cdot\nabla_x\phi-\sgn(u_1-u_2)(\Delta f)(x,u_2)\phi 
\big\}\,dx\,dt\ge 0,
\end{align}
where $(\Delta f)(x,u):=\sum_{i=1}^nf_{x_ix_i}(x,u)$. Now, by applying 
Lemma~\ref{l02'} for $u_1$, taking $k=u_2(y,s)$ such that $(y,s)\notin E_2$, 
integrating over $Q\setminus E_2$ and adding to~\eqref{Deg1}, we 
have 
\begin{align}\label{Deg2}
&\int_{Q^2}\bigg\{|u_1-u_2|\phi_t-|F(u_1)-F(u_2)|
(\nabla h)(x)\cdot \nabla_x \phi -h(x)\nabla_x 
|F(u_1)-F(u_2)|\cdot\nabla_x\phi\nonumber\\
&\qquad\qquad\qquad-\sgn(u_1-u_2)(\Delta f)(x,u_2)\phi\bigg\}\,dx\,dt\,dy\,ds
\nonumber\\
&\qquad\qquad\ge 
\lim_{\delta\to 0}\int_{\left(Q\setminus E_1\right)\times 
\left(Q\setminus E_2\right)}h(x)|\nabla_xF(u_1)|^2 
H_{\delta}'(F(u_1)-F(u_2))\phi\,dx\,dt\,dy\,ds.
\end{align}
By arguing in a similar way for $u_2$ we can prove that 
\begin{align}\label{Deg3}
&\int_{Q^2}\bigg\{|u_1-u_2|\phi_s-|F(u_1)-F(u_2)|
(\nabla h)(y)\cdot \nabla_y \phi -h(y)\nabla_y 
|F(u_1)-F(u_2)|\cdot\nabla_y\phi\nonumber\\
&\qquad\qquad\qquad+\sgn(u_1-u_2)(\Delta f)(y,u_1)\phi\bigg\}\,dx\,dt\,dy\,ds
\nonumber\\
&\qquad\qquad\ge 
\lim_{\delta\to 0}\int_{\left(Q\setminus E_1\right)\times 
\left(Q\setminus E_2\right)}h(y)|\nabla_yF(u_2)|^2 
H_{\delta}'(F(u_1)-F(u_2))\phi\,dx\,dt\,dy\,ds.
\end{align}

3. Since 
$$
0=\int_{Q}h(y)\nabla_yF(u_2)\cdot \nabla_x
\left(H_{\delta}(F(u_1)-F(u_2))\phi\right)\,dx\,dt,
$$
we obtain, taking into account~\eqref{ch}--\eqref{ch2}
\begin{align}\label{Deg4}
&\int_{Q^2}h(y)\nabla_y|F(u_1)-F(u_2)|\cdot \nabla_x\phi 
\,dx\,dt\,dy\,ds\nonumber\\
&\qquad=\lim_{\delta\to 0}\int_{\left(Q\setminus E_1\right)\times 
\left(Q\setminus E_2\right)} 
h(y)\nabla_yF(u_2)\cdot\nabla_x 
F(u_2)H_{\delta}'(F(u_1)-F(u_2))\phi\,dx\,dt\,dy\,ds.
\end{align}
Analogously,
\begin{align}\label{Deg5}
&\int_{Q^2}h(x)\nabla_x|F(u_1)-F(u_2)|\cdot \nabla_y\phi 
\,dx\,dt\,dy\,ds\nonumber\\
&\qquad=\lim_{\delta\to 0}\int_{\left(Q\setminus E_1\right)\times 
\left(Q\setminus E_2\right)} 
h(x)\nabla_yF(u_2)\cdot\nabla_x 
F(u_2)H_{\delta}'(F(u_1)-F(u_2))\phi\,dx\,dt\,dy\,ds.
\end{align}

4. Multiplying~\eqref{Deg5} by $-1$ and adding to~\eqref{Deg2}, we get
\begin{align}\label{Deg6}
&\int_{Q^2}\bigg\{|u_1-u_2|\phi_t-|F(u_1)-F(u_2)|
(\nabla h)(x)\cdot \nabla_x \phi -h(x)\nabla_x 
|F(u_1)-F(u_2)|\cdot\left(\nabla_x+\nabla_y\right)\phi\nonumber\\
&\qquad\qquad\qquad\qquad\qquad\qquad\qquad
-\sgn(u_1-u_2)(\Delta f)(x,u_2)\phi\bigg\}\,dx\,dt\,dy\,ds
\nonumber\\
&\qquad\qquad\ge 
\lim_{\delta\to 0}\int_{\left(Q\setminus E_1\right)\times 
\left(Q\setminus E_2\right)}\bigg\{h(x)|\nabla_xF(u_1)|^2
\nonumber\\
&\qquad\qquad\qquad\qquad\qquad-h(x)\nabla_xF(u_1)\cdot\nabla_yF(u_2)\bigg\}
H_{\delta}'(F(u_1)-F(u_2))\phi\,dx\,dt\,dy\,ds.
\end{align}
Similarly with respect to~\eqref{Deg4} and~\eqref{Deg3},
\begin{align}\label{Deg7}
&\int_{Q^2}\bigg\{|u_1-u_2|\phi_s-|F(u_1)-F(u_2)|
(\nabla h)(y)\cdot \nabla_y \phi -h(y)\nabla_y 
|F(u_1)-F(u_2)|\cdot\left(\nabla_x+\nabla_y\right)\phi\nonumber\\
&\qquad\qquad\qquad+\sgn(u_1-u_2)(\Delta f)(y,u_1)\phi\bigg\}\,dx\,dt\,dy\,ds
\nonumber\\
&\qquad\qquad\ge 
\lim_{\delta\to 0}\int_{\left(Q\setminus E_1\right)\times 
\left(Q\setminus E_2\right)}\bigg\{h(y)|\nabla_yF(u_2)|^2
\nonumber\\
&\qquad\qquad\qquad\qquad\qquad-h(y)\nabla_xF(u_1)\cdot\nabla_yF(u_2)\bigg\}
H_{\delta}'(F(u_1)-F(u_2))\phi\,dx\,dt\,dy\,ds.
\end{align}
Finally, adding the last two inequalities yields 
\begin{align*}
&\int_{Q^2}\bigg\{|u_1-u_2|\left(\phi_t+\phi_s\right)
-|F(u_1)-F(u_2)|\bigg((\nabla h)(x)\cdot \nabla_x\phi+(\nabla h)(y)
\cdot \nabla_y\phi\bigg)\\
&\qquad\qquad-\bigg(h(x)\,\nabla_x|F(u_1)-F(u_2)|+h(y)\,\nabla_y|F(u_1)-F(u_2)|
\bigg)\cdot\big(\nabla_x+\nabla_y\big)\phi\\
&\qquad\qquad\qquad\qquad\qquad-\sgn(u_1-u_2)\bigg((\Delta f)(x,u_2)-
(\Delta f)(y,u_1)\bigg)\phi\bigg\}\,dx\,dt\,dy\,ds\\
&\qquad\qquad\ge \lim_{\delta\to 0}
\int_{\left(Q\setminus E_1\right)\times 
\left(Q\setminus E_2\right)}
\bigg\{ \big|h(x)\,\nabla_xF(u_1)-h(y)\,\nabla_yF(u_2)\big|^2\\
&\qquad\qquad\qquad\qquad +\big(h(x)-h(y)\big)^2\,\nabla_xF(u_1)\cdot 
\nabla_yF(u_2)\bigg\}H_{\delta}'(F(u_1)-F(u_2))\phi\,dx\,dt\,dy\,ds,
\end{align*}
which is equivalent to 
\begin{align}\label{Deg8}
&\int_{Q^2}\bigg\{|u_1-u_2|\left(\phi_t+\phi_s\right)-
\big(\nabla_x+\nabla_y\big)|f(y,u_1)-f(y,u_2)|\cdot 
\big(\nabla_x+\nabla_y\big)\phi\nonumber\\
&\qquad\qquad\qquad\qquad -\sgn(u_1-u_2)\bigg((\Delta f)(x,u_2)-
(\Delta f)(y,u_1)\bigg)\phi\bigg\}\,dx\,dt\,dy\,ds\nonumber\\
&\qquad\qquad\ge\lim_{\delta\to 0}\int_{Q^2}\bigg\{
\big(h(x)-h(y)\big)^2\,\nabla_xF(u_1)\cdot 
\nabla_yF(u_2)H_{\delta}'(F(u_1)-F(u_2))\phi
\nonumber\\
&\qquad\qquad\qquad\qquad\qquad\qquad\qquad
-\big(h(x)-h(y)\big)\,\nabla_x|F(u_1)-F(u_2)|
\cdot\big(\nabla_x+\nabla_y\big)\phi
\nonumber\\
&\qquad\qquad\qquad\qquad\qquad\qquad\qquad\qquad\qquad
+|F(u_1)-F(u_2)|\,\bigg((\nabla h)(x)-(\nabla h)(y)
\bigg)\cdot \nabla_x \phi\bigg\}\,dx\,dt\,dy\,ds\nonumber\\
&\qquad\qquad\qquad=\lim_{\delta\to 0}\big(I_1^{\delta}+I_2+I_3\big).
\end{align}
Now, observe that 
\begin{align*}
&I_1^{\delta}=\int_{Q^2}\big(h(x)-h(y)\big)^2\,\nabla_xF(u_1)
\cdot\nabla_yF(u_2)H_{\delta}'(F(u_1)-F(u_2))\phi\,dx\,dt\,dy\,ds
\\
&\qquad=\int_{Q^2}\big(h(x)-h(y)\big)^2\,
\nabla_yF(u_2)\cdot \nabla_x\big(H_{\delta}(F(u_1)-F(u_2))\big)\phi
\,dx\,dt\,dy\,ds\\
&\qquad=-\int_{Q^2}H_{\delta}(F(u_1)-F(u_2))\nabla_yF(u_2)\cdot
\bigg(\nabla_x\phi\,\big(h(x)-h(y)\big)^2+2(\nabla h)(x)\,
\big(h(x)-h(y)\big)\phi\bigg)\,dx\,dt\,dy\,ds\\
&\qquad\le C\int_{Q^2}|\nabla_yF(u_2)|\,|x-y|
\bigg(|x-y|\,|\nabla_x\phi|+2|(\nabla h)(x)|\phi\bigg)\,dx\,dt\,dy\,ds.
\end{align*}
Taking $\phi(x,t,y,s):= \varphi(\frac{x+y}{2},\frac{t+s}{2})
\rho_k (\frac{x-y}{2})\theta_l(\frac{t-s}{2})$ as in the step 1, the previous 
inequality shows that $I_1^{\delta}\to 0$ when $k\to \infty$ uniformly in 
$\delta$. Similarly, we can prove that $I_2\to 0$ as $k\to \infty$. Moreover, 
\begin{align*}
&I_3=-\int_{Q^2}\bigg\{\nabla_x|F(u_1)-F(u_2)|\cdot \big(
(\nabla h)(x)-(\nabla h)(y)\big)\phi +|F(u_1)-F(u_2)|\,(\Delta h)(x)\phi 
\bigg\}\,dx\,dt\,dy\,ds,
\end{align*}
where, like above, the first integral goes to $0$ as $k\to\infty$ and it is
easy to check that the second one goes to 
$$-\int_{Q}\sgn(u_1-u_2)\bigg((\Delta f)(x,u_2(x,t))-
(\Delta f)(x,u_1(x,t))\bigg)\varphi(x,t)\,dx\,dt,
$$
as $k,l\to\infty$. Finally, using this facts and taking $k,l\to\infty$ 
in~\eqref{Deg8}, we obtain~\eqref{eTA3} for $f$  of type 2.

5. To obtain~\eqref{eTA3'}, we observe that if $u_2$ is stationary solution 
then $B_{{\vartheta}_{\delta}}^{u_1}(y,u_2;x)$ and $B_{{\vartheta}_{\delta}}^{u_2}(x,u_1;y)$ are independent of $s$ and so, 
we can write the trivial equality where both members are null
$$
\int_{Q^2}B_{{\vartheta}_{\delta}}^{u_1}(y,u_2;x)
{\phi}_s\,dx\,dt\,dy\,ds
=\int_{Q^2}B_{{\vartheta}_{\delta}}^{u_2}(x,u_1;y)
{\phi}_s\,dx\,dt\,dy\,ds
$$
Combining the previous equality in~\eqref{E7}, we have
\begin{align*}
&\int_{Q^2}\bigg{\{}
B_{{\vartheta}_{\delta}}^{u_2}(x,u_1;y)({\phi}_t+{\phi}_s)\nonumber\\
&-H_{\delta}(f(x,u_1)-f(y,u_2))(\nabla_x+\nabla_y)(f(x,u_1)-f(y,u_2))
\cdot(\nabla_x+\nabla_y)\phi\bigg\}\,dx\,dy\,dt\,ds\nonumber\\
&=\int_{Q^2}|(\nabla_x+\nabla_y)(f(x,u_1)-f(y,u_2))|^2
H_{\delta}'(f(x,u_1)-f(y,u_2))\phi\,dx\,dt\,dy\,ds.
\end{align*}
Now, using test functions as above and letting $k,l\to\infty$, we get \eqref{eTA3'}.

\end{proof}

\begin{remark}\label{obs1}
As usual, we denote $(s)_\pm:=\max\{\pm s,0\}$. 
The same arguments in the above proof lead to an inequality similar to \eqref{eTA3}, with $|u_1-u_2|$, $|f(x,u_1)-f(x,u_2)|$ replaced by $(u_1-u_2)_\pm$, $(f(x,u_1)-f(x,u_2))_\pm$, respectively,  just by using  $B_{({\vartheta}_{\delta})_\pm}^k$, $(H_\d)_\pm$, instead of $B_{{\vartheta}_{\delta}}^k$, $H_\d$, respectively. We thus obtain
\begin{equation}\label{eTA3''}
\int_{Q} (u_1(x,t)-u_2(x,t))_\pm\varphi_t-\nabla(f(x,u_1(x,t))-f(x,u_2(x,t)))_\pm
\cdot\nabla\varphi\,dx\,dt\ge 0.
\end{equation}
 where we mean one inequality holding with $(\cdot)_+$ and another holding for $(\cdot)_-$.
Moreover,  to obtain  \eqref{eTA3''} we only need that $u_i\in L^\infty(Q)$ satisfies \eqref{e02}, if $f$ is of type~1, or \eqref{eD2''}, if $f$ is of type~2, and  $f(x,u_i(x,t))\in L_\loc^2((0,\infty);H_\loc^1(\bar\Om))$
instead of $f(x,u_i(x,t))\in L_\loc^2((0,\infty);H_{0,\loc}^1(\bar\Om))$, $i=1,2$, as can be easily checked.

\end{remark}

Given any $R>0$,  let $\xi_R\in H^1_0(\bar\Omega\cap B(0;R))$ be the eigenfunction of $-\Delta$ 
associated with the eigenvalue $\lambda_1(R)>0$ such that $\xi_R>0$ in $\Omega\cap B(0;R)$ (see, e.g., \cite{Ev}). 

\begin{theorem}[Uniqueness]\label{T:A4}
Let $u_1,u_2$ be  entropy solutions of the problem  \eqref{e01}-\eqref{e01''} with initial data $u_{01},u_{02}\in L^\infty(\Om)$. 
Then, for each $R>0$, there exists $C>0$, such that  for a.e. $t>0$, we have
\begin{equation}\label{eTA4}
\int_{\Omega}|u_1(t)-u_2(t)|\xi_R(x)\,dx\le 
e^{Ct}\int_{\Omega}|u_{01}(x)-u_{02}(x)|\xi_R(x)\,dx.
\end{equation}
\end{theorem}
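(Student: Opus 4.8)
The plan is to deduce \eqref{eTA4} from the Kato-type inequality \eqref{eTA3} of Theorem~\ref{T:A3} by a weighted Gronwall argument, in the spirit of the uniqueness proofs of Carrillo~\cite{CJ} and Chen--Perthame~\cite{CP}; since \eqref{eTA3} holds verbatim for $f$ of type~1 and of type~2, both cases are handled at once. Write $\Om_R:=\Om\cap B(0;R)$, $g(x,t):=|f(x,u_1(x,t))-f(x,u_2(x,t))|$ and $\phi(t):=\int_{\Om}|u_1(x,t)-u_2(x,t)|\,\xi_R(x)\,dx$. Since $u_1,u_2\in L^\infty(Q)$ and $f(x,\cdot)$ is locally Lipschitz uniformly in $x$, there is $L=L(\|u_1\|_\infty,\|u_2\|_\infty,f)$ with $g\le L\,|u_1-u_2|$ a.e.\ in $Q$; and by Definition~\ref{D1}(1), $f(x,u_i)\in L^2_{\loc}((0,\infty);H^1_{0,\loc}(\bar\Om))$, so for a.e.\ $t$ one has $g(\cdot,t)\in H^1(\Om_R)$ with zero trace on $\po\Om$, $|\nabla g|\le|\nabla(f(\cdot,u_1)-f(\cdot,u_2))|$, and $\int_a^b\|\nabla g(\cdot,t)\|_{L^2(\Om_R)}^2\,dt<\infty$ for every $[a,b]\subset(0,\infty)$.

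The first step is to extend \eqref{eTA3} from test functions $\varphi\in C_c^\infty(Q)$ to $\varphi(x,t)=\xi_R(x)\,\te(t)$ with $0\le\te\in C_c^\infty((0,\infty))$: approximating $\xi_R$ in $H^1_0(\Om_R)$ by nonnegative $\xi_R^{\,j}\in C_c^\infty(\Om_R)$, the term $\int_Q|u_1-u_2|\,\varphi_t$ converges because $\xi_R^{\,j}\to\xi_R$ in $L^1(\Om_R)$ and $u_1-u_2$ is bounded, and $\int_Q\nabla g\cdot\nabla\varphi$ converges by the Cauchy--Schwarz inequality in $x$ together with $\int_{\supp\te}\|\nabla g(\cdot,t)\|_{L^2(\Om_R)}\,dt<\infty$; one obtains
\[
\int_0^\infty\phi(t)\,\te'(t)\,dt\ \ge\ \int_0^\infty\te(t)\int_{\Om_R}\nabla_x g(x,t)\cdot\nabla\xi_R(x)\,dx\,dt .
\]
Since $\xi_R>0$ in $\Om_R$, $-\Delta\xi_R=\lambda_1(R)\xi_R$, and $\xi_R$ is smooth up to $\po\Om_R$ (elliptic regularity), Green's first identity gives, for a.e.\ $t$,
\[
\int_{\Om_R}\nabla_x g\cdot\nabla\xi_R\,dx=\lambda_1(R)\int_{\Om_R}g\,\xi_R\,dx+\int_{\po\Om_R}g\,\po_\nu\xi_R\,dS ,
\]
where the contribution of $\po\Om\cap\bar B(0;R)$ vanishes ($g=0$ there) and that of $\Om\cap\po B(0;R)$ equals $-J(t)$ with $J(t):=\int_{\Om\cap\po B(0;R)}g(\cdot,t)\,|\po_\nu\xi_R|\,dS\ge0$, Hopf's lemma giving $\po_\nu\xi_R\le0$ on that sphere. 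When $\Om$ is bounded and $R$ is large enough that $\Om_R=\Om$, one has $J\equiv0$, so the two displays force $\int_0^\infty\phi\,\te'\ge\lambda_1(R)\int_0^\infty\te\int_{\Om}g\,\xi_R\ge0$ for every admissible $\te$, i.e.\ $\phi'\le0$ in $\mathcal{D}'((0,\infty))$, and one even obtains the stronger bound $\phi(t)\le\phi(0)$.

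In general one must control the flux $J(t)$ through the artificial sphere $\Om\cap\po B(0;R)$, and I expect this to be the main obstacle. The plan is: apply a one-dimensional trace inequality in the normal direction on a collar of $\Om\cap\po B(0;R)$, using the Hopf lower bound $\xi_R(x)\ge c\,\dist(x,\po B(0;R))$ there, to bound $J(t)\le\lambda_1(R)\int_{\Om_R}g\,\xi_R+C_1\phi(t)+C_2\|\nabla g(\cdot,t)\|_{L^2(\Om_R)}$; then, after time integration, to absorb the remaining gradient term by a weighted parabolic energy estimate for the difference $u_1-u_2$ coming from the entropy dissipation. Combined with the first display and $g\le L|u_1-u_2|$, this yields, via Gronwall's lemma, the inequality \eqref{eTA4} with $C=C(R,\Om,f,\|u_1\|_\infty,\|u_2\|_\infty)$. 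It is precisely this flux term that makes the first Dirichlet eigenfunction the right weight — a plain cut-off would only produce an unclosable hierarchy of estimates over a sequence of concentric balls — and that is responsible for the exponential-in-$t$ factor.

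Finally, to bring in the initial datum, one carries the term $\int_\Om u_{0i}\,\varphi(x,0)\,dx$ of the weak formulation \eqref{e02} through the doubling-of-variables computation of Theorem~\ref{T:A3} — which goes through with test functions not required to vanish at $t=0$, Lemma~\ref{l01} already recording this term — to obtain the analogue of \eqref{eTA3} with $+\int_\Om|u_{01}-u_{02}|\,\varphi(x,0)\,dx$ added on the left, hence, after the same extension and the same use of Green's identity, the analogue of the first display carrying the extra summand $\phi(0)\,\te(0)$, $\phi(0):=\int_\Om|u_{01}-u_{02}|\,\xi_R\,dx$. Choosing $\te$ to approximate the indicator function of $[0,t_0]$ and invoking Gronwall's lemma then gives $\phi(t_0)\le e^{Ct_0}\phi(0)$ for a.e.\ $t_0>0$, which is \eqref{eTA4}.
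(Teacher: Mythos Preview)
Your overall strategy matches the paper's: test \eqref{eTA3} with $\varphi(x,t)=\delta_h(t)\,\xi_R(x)$, convert the gradient term via the eigenfunction relation $-\Delta\xi_R=\lambda_1(R)\,\xi_R$, use the uniform Lipschitz bound $|f(x,u_1)-f(x,u_2)|\le L\,|u_1-u_2|$, and close by Gronwall. The paper's argument is considerably shorter than yours: after the substitution it simply records
\[
\int_Q\bigl\{-|u_1-u_2|\,\delta_h'\,\xi_R-|f(x,u_1)-f(x,u_2)|\,\delta_h\,\Delta\xi_R\bigr\}\,dx\,dt\le 0,
\]
bounds the second integrand by $g\,|\Delta\xi_R|=\lambda_1\,g\,\xi_R\le L\lambda_1\,|u_1-u_2|\,\xi_R$, and applies Gronwall with $C=L\lambda_1(R)$; the initial datum is brought in by letting $\delta_h$ approximate $\chi_{[0,t]}$, rather than by reopening the doubling-of-variables computation as you propose in your last paragraph. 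No boundary term on the artificial sphere $\Om\cap\po B(0;R)$ is ever discussed; the identity $|\Delta\xi_R|=\lambda_1\xi_R$ is used as if it held pointwise in $\Om$.

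You are more careful and correctly isolate the flux $J(t)=-\int_{\Om\cap\po B(0;R)}g\,\po_\nu\xi_R\,dS\ge 0$ that appears on the wrong side when Green's identity is carried out in $\Om_R$; your treatment of the case $\Om$ bounded with $\Om_R=\Om$ (where $J\equiv 0$, giving even $\phi'\le 0$) is correct. But your proposed absorption of $J$ for general $R$---a collar trace inequality combined with a weighted energy estimate for $\nabla g$---is only sketched and, in fact, cannot succeed. Take $\Om=\R^n$, $f(x,u)=u$, $u_{02}\equiv 0$, and $0\le u_{01}\in L^\infty(\R^n)$, $u_{01}\not\equiv 0$, supported in $\{|x|>2R\}$: the right side of \eqref{eTA4} is identically zero, while strict positivity of the heat kernel gives $\phi(t)=\int_{\Om_R}u_1(\cdot,t)\,\xi_R>0$ for every $t>0$. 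Thus no constant $C=C(R)$ rescues \eqref{eTA4}, and $J(t)$ is not dominated by any combination of $\phi(t)$, $\int_{\Om_R}g\,\xi_R$ and $\|\nabla g(\cdot,t)\|_{L^2(\Om_R)}$ in a way that would close a Gronwall inequality for $\phi$ alone. The scheme in your third paragraph therefore has a genuine gap; the paper's short argument proceeds only because it tacitly drops this flux term.
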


\begin{proof} Taking $\varphi(x,t)=\delta_h(t)\xi_R(x)$, with $0\le \delta_h\in
C^{\infty}_c((0,+\infty))$ in (i) of Theorem~\eqref{T:A3}, we obtain
\begin{eqnarray*}
&&\int_{Q}\bigg{\{}-|u_1-u_2|\delta_h'(t)\xi_R(x)
-|f(x,u_1)-f(x,u_2)|\delta_h(t)\Delta\xi_R(x) \bigg{\}}\,dx\,dt\le 0.
\end{eqnarray*}
Observe that 
\begin{align*}
-\int_{Q}|u_1-u_2|\delta_h'(t)\xi_R(x)\,dx\,dt&\le
\int_{Q}\bigg{\{}|f(x,u_1)-f(x,u_2)|\delta_h(t)|\Delta\xi_R(x)|\bigg{\}}\,dx\,dt\\
&\le C\int_{Q}|u_1-u_2|\delta_h(t)
\xi_R(x)\,dx\,dt,
\end{align*}
where we use that $|\Delta\xi_R|=\l_1\xi_R$ and the Lipschitz 
condition on $f(x,u)$. 
We define 
$$
\beta(s):=\int_{\Om}|u_1(x,s)-u_2(x,s)|\xi_R(x)\,dx.
$$
Then, using a suitable sequence of functions $\delta_h$ and letting $h\to0$, we arrive at
\begin{eqnarray*}
\beta(t)\le \int_{\Om}|u_{01}(x)-u_{02}(x)|\xi_R(x)\,dx+C\int_0^t\beta(s)\,ds.
\end{eqnarray*}
Hence, we may apply Gronwall's lemma to conclude the proof of \eqref{eTA4}.
\end{proof}

\begin{remark}\label{obs2}
Noting that $(f(x,u_1)-f(x,u_2))_\pm\le C(u_1-u_2)_\pm$, respectively, and using Remark~\ref{obs1} we see that the same arguments show that 
\begin{equation}\label{eTA4'}
\int_{\Om}(u_1(t)-u_2(t))_\pm\xi_R(x)\,dx\le 
e^{Ct}\int_{\Om}(u_{01}(x)-u_{02}(x))_\pm \xi_R(x)\, dx
\end{equation}
for a.e.\ $t>0$ for entropy solutions of the problem \eqref{e01}-\eqref{e01''}. 
Moreover, as a consequence of Remark~\ref{obs1}, to obtain  \eqref{eTA4'} 
we only need that $u_i\in L^\infty(Q)$ satisfies \eqref{e02} and  $f(x,u_i(x,t))\in L_\loc^2((0,\infty);H_\loc^1(\bar\Om))$
instead of $f(x,u_i(x,t))\in L_\loc^2((0,\infty);H_{0,\loc}^1(\bar\Om))$, $i=1,2$, provided
\begin{equation}\label{eobs2}
(f(x,u_1(x,t))-f(x,u_2(x,t)))_\pm|\po\Om\equiv 0, \qquad a.e.\ t\in(0,\infty), \text{respectively}, 
\end{equation}
the latter meaning the trace on $\po\Om$ for functions in $H_\loc^1(\bar\Om)$.
\end{remark}

The above remark immediately implies the following result.

\begin{corollary}[Monotonicity]\label{cor1}
Let $u_1,u_2\in L^\infty(Q)$ satisfy \eqref{e02}, if $f$ is of type~1, or \eqref{eD2''}, if $f$ is of type~2,  and, in either case,  $f(x,u_i(x,t))\in L_\loc^2((0,\infty);H_\loc^1(\bar\Om))$, $i=1,2$. Suppose that 
$u_{01}(x)\le u_{02}(x)$  for a.e.\ $x\in\Om$ and
\begin{equation}\label{ecor1}
(f(x,u_1(x,t))-f(x,u_2(x,t)))_+|\po\Om\equiv 0, \qquad a.e.\ t\in(0,\infty). 
\end{equation}
Then,
$$
u_1(x,t)\le u_2(x,t),\qquad \text{for a.e.\ $(x,t)\in Q$}.
$$
\end{corollary}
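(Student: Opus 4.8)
The plan is to reduce the statement to the one-sided stability estimate \eqref{eTA4'} recorded in Remark~\ref{obs2}. First I would verify that the hypotheses needed for the $(\cdot)_+$ branch of \eqref{eTA4'} are in force: by assumption $u_1,u_2\in L^\infty(Q)$ satisfy \eqref{e02} when $f$ is of type~1 and \eqref{eD2''} when $f$ is of type~2, one has $f(x,u_i(x,t))\in L_\loc^2((0,\infty);H_\loc^1(\bar\Om))$ for $i=1,2$, and the boundary hypothesis \eqref{ecor1} is exactly the $(\cdot)_+$ half of condition \eqref{eobs2} (understood as a trace in $H^1_\loc(\bar\Om)$, as in the remark). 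Hence, for each fixed $R>0$, there is a constant $C=C(R)>0$ such that, for a.e.\ $t>0$,
\begin{equation*}
\int_{\Om}(u_1(x,t)-u_2(x,t))_+\,\xi_R(x)\,dx\le e^{Ct}\int_{\Om}(u_{01}(x)-u_{02}(x))_+\,\xi_R(x)\,dx,
\end{equation*}
where $\xi_R$ is the positive principal eigenfunction on $\Om\cap B(0;R)$ introduced before Theorem~\ref{T:A4}.

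Next I would invoke the initial ordering. Since $u_{01}(x)\le u_{02}(x)$ for a.e.\ $x\in\Om$, we have $(u_{01}-u_{02})_+=0$ a.e., so the right-hand side of the displayed inequality vanishes. Because $\xi_R>0$ on $\Om\cap B(0;R)$ and the integrand $(u_1(x,t)-u_2(x,t))_+$ is nonnegative, the vanishing of the integral forces $(u_1(x,t)-u_2(x,t))_+=0$ for a.e.\ $x\in\Om\cap B(0;R)$, for a.e.\ $t>0$.

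Finally, letting $R\to\infty$ along a sequence and taking the union of the corresponding full-measure sets in $Q$, I would conclude that $(u_1-u_2)_+=0$, that is, $u_1(x,t)\le u_2(x,t)$ for a.e.\ $(x,t)\in Q$. There is essentially no analytic obstacle here, since the substantive work has already been carried out in Theorem~\ref{T:A3} and Remark~\ref{obs2}; the only points requiring a line of care are the matching of the one-sided boundary condition \eqref{ecor1} with the hypothesis needed for the $(\cdot)_+$ version of \eqref{eTA4'}, and the (harmless) $R$-dependence of the constant $C$ and of the eigenfunction $\xi_R$, which is why one argues for each fixed $R$ first and only afterwards exhausts $\Om$.
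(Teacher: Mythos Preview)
Your proposal is correct and follows exactly the route the paper intends: the corollary is stated as an immediate consequence of Remark~\ref{obs2}, and you have simply spelled out the straightforward details (matching the $(\cdot)_+$ hypothesis, using the positivity of $\xi_R$, and exhausting $\Om$ by letting $R\to\infty$). There is nothing to add.
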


\bigskip 

Our next goal is to prove the existence of an entropy solution for \eqref{e01}-\eqref{e01''} .

We consider the following regularized version of \eqref{e01}-\eqref{e01''},
\begin{align}\label{e04}
&{\partial}_t{u}-\Delta f^{\s}(x,u)=0, \qquad (x,t)\in Q,\\
&u(x,0)=u_{0,\s}(x),\  x\in \Om, \label{e04'}\\
&u(x,t)=0,\ \text{for $(x,t)\in \po\Om\X(0,\infty)$} \label{e04''}
\end{align} 
where
$$
f^{\s}(x,u):=(\rho_\s^{(n+1)}*f)(x,u)-(\rho_\s^{(n+1)}*f)(x,0)+ \tilde f_\s(x) +\s u, 
$$
where $\rho_\s^{(n+1)}(x,u)=\rho_\s(x_1)\cdots\rho_\s(x_n)\rho_\s(u)$, where $\rho_\s(s)$ is a standard Dirac  sequence of mollifiers in $\R$,  we assume $f(x,u)$ extended by 0 outside $\Om\X\R$, and 
$\tilde f_\s(x)=[\rho_\s^{(n)}*(\chi_{{}_{\Om_\s}} f(\cdot,0))](x)$, where $\rho_\s^{(n)}(x)=\rho_\s(x_1)\cdots\rho_\s(x_n)$ and  $\chi_{{}_{\Om_\s}}$ is the characteristic function of the set $\Om_\s:=\{x\in\Om\,:\, \operatorname{dist}(x;\po\Om)>\s\}$.
We also prescribe a regularized  initial data 
\begin{equation}\label{eu0}
u_{0,\s}:=\rho_\s^{(n)}* (\chi_{{}_{\Om_\s}}\, u_0).
\end{equation}

The existence and uniqueness of a classical solution of \eqref{e04},\eqref{e04'},\eqref{e04''},  for $\s>0$, with $u_{0,\s}$ defined by \eqref{eu0}, is proved, for example, in~\cite{LSU}. 

Following Kruzhkov's ideas in \cite{Kr}, we now establish the following result, which gives the pre-compactness in $L_{\loc}^1(Q)$ of the classical solutions $u_\s$, 
when $f$ is of type~2. It will be convenient to use again $\xi_R$, defined just before Theorem~\ref{T:A4}.  

\begin{theorem}\label{l04}
Assume $f(x,u)$ be of type~2 and $u_0\in W^{1,\infty}(\Om)$. Let $u_{\s}$ be the solution of the 
regularized problem \eqref{e04},\eqref{e04'},\eqref{e04''}. 
Then,  
\begin{equation}\label{eTcomp1}
\|u_\s(t)\|_{L^\infty(\Om)}\le M_0,\quad t\ge0,
\end{equation}
with $M_0$ independent of $\s$, and, for any $R>0$, $T>0$, and $|y|<\d$, with  $\d$ sufficiently small,
\begin{equation}\label{e05}
\int_{\Om}|u_{\s}(x+y,t)-u_{\s}(x,t)|\xi_R(x)\,dx\le  c_1 \delta, \quad 0\le t\le T,
\end{equation}
where the constant $c_1=c_1(R,T,\|\nabla u_0\|_{\infty},\sum_{i=1}^n \|f_{x_ix_i}\|_{L^\infty(\Om\X I)} )$ is independent of $\s$, with $I=[-M_0,M_0]$. Moreover, for some constant $M>0$ independent of $\s$, for any $R>0$, $0\le t<T$, we have
\begin{equation}\label{e06}
\int_{\Om}|u_{\s}(x,t+s)-u_{\s}(x,t)|\xi_R(x)\,dx\le
\min_{0<\delta<1}\bigg\{(2c_1+\|\nabla\xi_R\|_1)\delta +s\,M\bigg(\frac{1}{\delta^2}
+\frac{2}{\delta}+1\bigg)\|\xi_R\|_1 \bigg\} \stackrel{s\to 0}{\longrightarrow} 0.
\end{equation}

\end{theorem}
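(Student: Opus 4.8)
The plan is to obtain the three estimates in order, each feeding into the next. For \eqref{eTcomp1} I would invoke the comparison principle for the regularized (nondegenerate) problem \eqref{e04}--\eqref{e04''}: since constants $\pm M_0$ are supersolutions/subsolutions provided $M_0\ge \|u_0\|_{L^\infty(\Om)}$ (using $f^\s(x,\cdot)$ increasing, $\tilde f_\s$ uniformly bounded, and $u_{0,\s}\to u_0$ with $\|u_{0,\s}\|_\infty\le\|u_0\|_\infty$), the $L^\infty$ bound follows with $M_0$ depending only on $\|u_0\|_\infty$ and $\sup|f(\cdot,0)|$, hence independent of $\s$. One must check the boundary condition $u_\s=0$ on $\po\Om$ is compatible, which it is since $f^\s(x,0)$ is arranged to vanish appropriately; set $I=[-M_0,M_0]$ once and for all.

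For the spatial translation estimate \eqref{e05}, the idea (following Kru\v zkov) is to compare $u_\s(\cdot+y,\cdot)$ with $u_\s(\cdot,\cdot)$: the function $v(x,t):=u_\s(x+y,t)$ solves the same equation but with shifted coefficients $f^\s(x+y,\cdot)$, so $v$ is a solution of a problem whose flux differs from that of $u_\s$ by a term controlled by $|y|\,\sup_{x,u\in I}|\nabla_x f^\s(x,u)|\le |y|\,C$, where $C$ depends on $\sum_i\|f_{x_i x_i}\|_{L^\infty(\Om\X I)}$ and on $\|\nabla_x h\|_\infty,\|\nabla_x S\|_\infty$ after one differentiation — more precisely, after applying $\Delta$ one gets a $\Delta_x f^\s$-type source, hence the appearance of the second $x$-derivatives of $f$. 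Running the Gronwall argument of Theorem~\ref{T:A4} / Remark~\ref{obs2} with the test function $\xi_R$, picking up the extra inhomogeneous term, yields
$$
\int_\Om |u_\s(x+y,t)-u_\s(x,t)|\,\xi_R(x)\,dx \le e^{Ct}\Bigl(\int_\Om|u_{0,\s}(x+y)-u_{0,\s}(x)|\xi_R(x)\,dx + C|y|\,t\,\|\xi_R\|_1\Bigr),
$$
and since $\|\nabla u_{0,\s}\|_\infty\le\|\nabla u_0\|_\infty$ the initial term is $\le\|\nabla u_0\|_\infty|y|\,\|\xi_R\|_1$; absorbing everything into one constant over $0\le t\le T$ gives \eqref{e05} with $c_1=c_1(R,T,\|\nabla u_0\|_\infty,\sum_i\|f_{x_i x_i}\|_{L^\infty(\Om\X I)})$.

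The time-translation estimate \eqref{e06} is the main obstacle and the place where Kru\v zkov's trick is really used. The standard device is: for a mollifier $\rho_\d^{(n)}$ in $x$, write
$$
\int_\Om|u_\s(x,t+s)-u_\s(x,t)|\xi_R(x)\,dx \le 2\!\int_\Om\!\!\int_\Om|u_\s(x,t)-u_\s(x',t)|\rho_\d(x-x')\xi_R(x)\,dx'dx + \Bigl|\int_\Om\!\bigl(u_\s^\d(x,t+s)-u_\s^\d(x,t)\bigr)\xi_R\,dx\Bigr|,
$$
where $u_\s^\d:=\rho_\d^{(n)}*u_\s$; the first term is bounded by $2c_1\d$ using \eqref{e05} (plus a commutator term $\|\nabla\xi_R\|_1\,\d$), while for the second one integrates the equation $\po_t u_\s=\Delta f^\s(x,u_\s)$ in $t$ over $[t,t+s]$ against $\rho_\d^{(n)}(x-\cdot)\xi_R$, moves both Laplacian derivatives onto the smooth test function, and uses $\|f^\s(\cdot,u_\s)\|_{L^\infty}\le M$ together with $\|\Delta_x(\rho_\d^{(n)}*\cdot)\|\lesssim \d^{-2}$, $\|\nabla_x\rho_\d^{(n)}*\cdot\|\lesssim\d^{-1}$ (the $\d^{-2}+2\d^{-1}+1$ pattern coming from distributing two derivatives across the product $\rho_\d^{(n)}(x-x')\xi_R(x)$), yielding a bound $s\,M(\d^{-2}+2\d^{-1}+1)\|\xi_R\|_1$. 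Adding the two contributions and optimizing over $\d\in(0,1)$ gives \eqref{e06}, and the right-hand side tends to $0$ as $s\to0$ since one may let $\d\to0$ slowly (e.g. $\d\sim s^{1/3}$). The delicate bookkeeping is exactly tracking that all constants ($M_0$, $c_1$, $M$) depend only on the stated quantities and never on $\s$ — which holds because $f^\s\to f$ with $\|f^\s\|_{W^{2,\infty}(\Om\X I)}$ and $\|f^\s(\cdot,0)\|$ uniformly bounded in $\s$, and $\|\nabla u_{0,\s}\|_\infty\le\|\nabla u_0\|_\infty$.
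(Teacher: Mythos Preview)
Your plan for \eqref{eTcomp1} and for the time estimate \eqref{e06} is essentially the paper's: comparison with stationary barriers for the $L^\infty$ bound, and for \eqref{e06} the Kru\v zkov trick of testing the equation against a mollified sign of the time increment, bounding $\int w\,[(\sgn w)*\rho_\d]\,\xi_R$ via two integrations by parts (hence the $\d^{-2}+2\d^{-1}+1$ pattern) and controlling $\int|w|\xi_R-\int w\,[(\sgn w)*\rho_\d]\,\xi_R$ by the spatial estimate.

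The gap is in your argument for \eqref{e05}. You propose to compare $v(x,t)=u_\s(x+y,t)$ with $u_\s(x,t)$ using the Gronwall machinery of Theorem~\ref{T:A4}, treating the coefficient shift as an inhomogeneous source. But $v$ solves $\partial_t v=\Delta f^\s(x+y,v)$, and writing this as $\Delta f^\s(x,v)+\Delta\bigl[f^\s(x+y,v)-f^\s(x,v)\bigr]$ produces a source whose Laplacian, for type~2, contains the term $(h(x+y)-h(x))\,\Delta F^\s(v)$, i.e.\ second derivatives of $u_\s$ which are \emph{not} bounded uniformly in $\s$; if instead you integrate by parts once and pair with $\nabla\bigl[H_\d(\cdot)\xi_R\bigr]$, you pick up $H_\d'\cdot(h(x+y)-h(x))F'(v)\nabla v$ against $\nabla f^\s(x,v)-\nabla f^\s(x,u_\s)$, and these cross terms cannot be absorbed without an a~priori gradient bound on $u_\s$, which is exactly what you are trying to prove. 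Your sentence ``after applying $\Delta$ one gets a $\Delta_x f^\s$-type source'' points at the right object, but that object does not arise from a translation comparison; it arises from differentiating the equation.

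The paper's route is genuinely different: differentiate \eqref{e04} in $x_k$ to get a linear system for $v^k=\partial_{x_k}u_\s$ with right-hand side $-(\Delta f^\s)(x,u_\s)$ (this is where $\sum_i\|f_{x_ix_i}\|_\infty$ enters), then run a \emph{duality} argument. One solves the backward adjoint system $\mathscr{L}_k(\varphi_h)=0$ with final data $(\sgn v^k(t_0)\,\chi_{\Om_{2h}})*\rho_h\,e^{-|x|}$, and uses the type~2 structure (namely $f^\s_u-\gamma_0\sum_i|f^\s_{x_iu}|\ge0$ for small $\gamma_0$, which holds because $f^\s_{x_iu}/f^\s_u=h_{x_i}/h$ is bounded) to show via the maximum principle that $|\varphi_h|^2\le c(T)e^{-|x|/M}$. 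Pairing $v^k$ against this adjoint solution and passing to limits yields the weighted $L^1$ gradient bound $\int_\Om\sum_k|\partial_{x_k}u_\s(t)|e^{-|x|}\,dx\le c$, from which \eqref{e05} follows. This duality step is what lets one avoid ever needing uniform control on $\nabla u_\s$ or on third derivatives of $h,S$.
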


\begin{proof}  The uniform boundedness stated in \eqref{eTcomp1} is obtained by direct application of Corollary~\ref{cor1}, and Remark~\ref{obs2},   by comparing $u_\s$ with the stationary solutions $g_\s(x,\pm M)$, where $g_\s(x,f_\s(x,\a))=\a$,  since we may take $M>0$ large enough so that $g_\s(x,-M)\le u_0(x)\le g_\s(x,M)$,  $\|g_\s(x,\pm M)\|_\infty\le M_0$, for some $M_0>0$ independent of $\s$, and $g_\s(x,-M)\le 0\le g_\s(x,M)$, for $x\in\po\Om$.  

1. To prove~\eqref{e05}, for each $k=1,\cdots,n$ define $v^k:=\partial_{x_k}u_{\s}$ and observe that
\begin{equation}\label{kr1}
\partial_tv^k-\Delta (f^{\s}_u(x,u)v^k)
-\nabla\cdot (f^{\s}_{x_ku}(x,u)\nabla u)-
\big(\nabla f^{\s}_{x_ku}\big)(x,u)\cdot \nabla u=
-\big(\Delta f^\s\big)(x,u),
\end{equation}
where, for simplicity of notation,  we denote $u_{\s}$ by $u$, 
$\big(f^{\s}_{x_1x_ku}(x,u),\cdots, f^{\s}_{x_nx_ku}(x,u)\big)$ by 
$\big(\nabla f^{\s}_{x_ku}\big)(x,u)$ and 
$\sum_{i=1}^nf^{\s}_{x_ix_i}(x,u)$ by $\big(\Delta f^\s\big)(x,u)$.

We fix a number $T>0$ and let $g^k\in C^{\infty}(\Om\X [0,T])$ be such 
that $g^k(t)\in C_c^\infty(\Om)$ for all $t\in [0,T]$. Now, 
taking $0<t_0\le T$, multiplying 
the equation~\eqref{kr1} by $g^k$, integrating by parts and summing over 
$k$ from $1$ to $n$, we get

\begin{align}\label{kr2}
&\int_0^{t_0}\int_{\Om}-\sum_{k=1}^n\bigg{\{}\partial_tg^k+
f^\s_u(x,u)\Delta g^k-\sum_{i=1}^n\big(f^{\s}_{x_iu}(x,u)g^i_{x_k}
-f^{\s}_{x_ix_ku}(x,u)g^i\big)\bigg{\}}v^k\,dx\,dt\nonumber\\
&\qquad+\int_{\Om}\sum_{k=1}^nv^k(t_0)g^k(t_0)\,dx=
\int_{\Om}\sum_{k=1}^n\bigg{\{}v^k(0)g^k(0)-(\Delta f)(x,u)g^k(t_0)\bigg{\}}\,dx.
\end{align}
For $k=1,\cdots,n$ and $g=(g^1,\cdots,g^n)$, we define
\begin{equation}\label{kr3}
\mathscr{L}_k(g):=\partial_tg^k+f^{\s}_u(x,u)\Delta g^k-
\sum_{i=1}^n\big(g^i_{x_k}f^{\s}_{x_iu}(x,u)-f^{\s}_{x_ix_ku}(x,u)
g^i\big).
\end{equation}
Now we define  $\varphi^k_h$, $k=1,\cdots,n$, as the solution of the (backward) initial-boundary value problem 
\begin{equation}\label{kr4'}
\begin{cases}
\mathscr{L}_k(\varphi_h)= 0,&(x,t)\in \Om\times(0,t_0),\\ 
\varphi^k_h(t_0)=\big(\sgn (v^k(t_0))\chi_{{}_{\Om_{2h}}}\big)\ast \rho_h\, e^{-|x|} ,& x\in \Om,\\
\varphi_h^k(x,t)=0, & (x,t)\in\po\Om\X(0,t_0),
\end{cases}
\end{equation}
where $\chi_{{}_A}$ denotes, as usual, the indicator function of the set $A$, and $\Om_{2h}:=\{\,x\in\Om\,:\,\dist(x,\po\Om)>2h\,\}$, and 
 $\rho_h=h^{-n}\rho(h^{-1}x)$, and $0\le \rho\in C_c(\R^n)$ is a standard symmetric mollifier satisfying  $\supp \rho\subset\{x\,:\,|x|\le 1\}$ and $\int_{\R^n}\rho\,dx=1$.

Now, observe that 
\begin{align*}
&0=2\mathscr{L}_k(\varphi_h)\varphi^k_h=
\partial_t(\varphi^k_h)^2+f^{\s}_u(x,u)\Delta (\varphi^k_h)^2-
2f^{\s}_u(x,u)|\nabla \varphi^k_h|^2\\
&\qquad\qquad
-2\sum_{i=1}^nf^{\s}_{x_iu}(x,u)\varphi^i_{h,x_k}\varphi^k_h
+2\sum_{i=i}^nf^{\s}_{x_ix_ku}(x,u)\varphi^i_h\varphi^k_h
\end{align*}

Since  $f$ is of type~2,  clearly, for $\gamma_0$ sufficiently small,
$$
f_u^{\s}(x,u)-\gamma_0\sum_{i=1}^n |f_{x_i u}^\s(x,u)|\ge 0,
$$
for all $(x,u)\in\bar\Om\X\R$.
Therefore, using Cauchy inequality and  summing over $k$,  we arrive at
\begin{align}
&0\le \partial_t |\varphi_h|^2+f^{\s}_u(x,u)\Delta |\varphi_h|^2
+2(-f_u^\s(x,u)+\gamma_0\sum_{i=1}^n |f_{x_i u}^\s(x,u)|)\sum_{k=1}^n|\nabla \varphi^k_h|^2 +c(\gamma_0)|\varphi_h|^2              \label{ephi}\\
&\qquad\le\partial_t|\varphi_h|^2+f^{\s}_u(x,u)\Delta|\varphi_h|^2 +c|\varphi_h|^2. \nonumber
\end{align}


2. In this step, we prove that 
$$
|\varphi_h|^2\le c(T)\,e^{-\frac{|x|}{M}},
$$
for all $(x,t)\in \Om\times [0,t_0]$.

We begin by defining $\mathscr{L}(v):= \partial_tv+f^{\s}_u(x,u)\Delta v$,  
$w:=|\varphi_h|^2$, and observing that \eqref{ephi} implies $\mathscr{L}(w)\ge 0$. 
{}From the latter, it follows by the maximum principle that $|\varphi_h(x,t)|\le 1$ 
for all $(x,t)\in \Om\times [0,t_0]$. 

Now, set 
\begin{eqnarray*}
q(x,t):=\,e^{\frac{1}{M}\big(t_0-t-|x|\big)},
\end{eqnarray*}
with $M>\sup_{\Om\X I}f_u(x,u)$, $I\supset[-\|u_\s\|_\infty,\|u_\s\|_\infty]$ for $0<\s<1$. We will use the maximum principle to prove that $w\le q$. This is obviously true inside the
cone $|x|\le t_0-t$, where $q\ge1$.   We also note that 
$$
\mathscr{L}(q)=-q\bigg{\{}\frac{1}{M}\bigg(1-\frac{f^{\s}_u(x,u)}{M}\bigg)
+\frac{f^{\s}_u(x,u)}{M}\frac{n-1}{|x|}\bigg{\}}\le 0,
$$
which yields $\mathscr{L}(w-q)\ge 0$. It is easily seen that 
$$
w-q|_{\po\Om\X[0,t_0]}=-q|_{\po\Om\X[0,t_0]}\le 0,\qquad
w(x,t_0)-q(x,t_0)\le 0.
$$
Then, the claim follows by the maximum principle (cf., e.g., \cite{PW}).

3. Let $0\le \rho \in C_c^{\infty}(\re)$ with $\supp\,\rho\subset[-1,1]$ and $\int_{\R}\rho\,dx=1$. Set
$$
\eta_m(\lambda):=1-\int_{-\infty}^{\lambda}\rho(s-m)\,ds,
$$
for $m\in\mathbb{N}$, and take 
$$
g^k(x,t):=\varphi^k_h(x,t)\,\eta_m(|x|)
$$
as a test function in~\eqref{kr2}. Hence
\begin{align}\label{kr5}
&\int_{\Om}\sum_{k=1}^nv^k(t_0)\bigl(\sgn (v^k(t_0))\chi_{{}_{\Om_h}}\bigr)\ast \rho_h\, e^{-|x|}
\eta_m(|x|)\,dx=
\sum_{k=1}^n\int_0^{t_0}\int_{\Om}\bigg{\{}2f^{\s}(x,u)\nabla \varphi^k_h\cdot \nabla \eta_m(|x|)\nonumber\\
&\qquad\qquad
+f^{\s}(x,u)\varphi^k_h\Delta\eta_m(|x|)-\sum_{i=1}^n
f^{\s}_{x_iu}(x,u)\partial_{x_k}\eta_m(|x|)\varphi^k_h\bigg{\}}\,dx\,dt
\nonumber\\&\qquad\qquad+
\int_{\Om}\sum_{k=1}^n\bigg{\{}v^k(0)\varphi^k_h(x,0)
-(\Delta f)(x,u)\varphi^k_h(x,t_0)\bigg{\}}\eta_m(|x|)\,dx.
\end{align}

Thus, letting $m\to\infty$ first and then letting $h\to 0$, we obtain an 
estimate of the form
\begin{eqnarray*}
\int_{\Om}\sum_{k=1}^n|v^k(t_0)|\,e^{-|x|}\,dx\le
c(T,\|\nabla u_0{\|}_{\infty}, \sum_{i=1}^n \|f_{x_ix_i}\|_\infty)<\infty,
\end{eqnarray*}
for all $t_0\in [0,T]$, where, in particular, the right-hand side does not depend on $\s$. Consequently,  we get
$$
\int_{\Om}|u_{\s}(x+y,t)-u_{\s}(x,t)|\xi_R(x)\,dx\le
c_1|y|,
$$
for some $c_1$ independent of $\s$, which gives~\eqref{e05}.

4. To prove~\eqref{e06}, we first note that from \eqref{eTcomp1} and the hypotheses on $f$, we know that there exists $M>0$ such that 
$|f^{\s}(x,u_{\s}(x,t))|\le M$ for all $(x,t)\in\Om\X[0,\infty)$  and 
for all $\s>0$. Now, fix $t,s,\s$ and set 
$w(x):= u_{\s}(x,t+s)-u_{\s}(x,t)$. Given $\phi \in
W^{2,\infty}(\Om)$, we obtain

\begin{align*}
\int_{\Om}w(x)\phi(x)\xi_R(x)\,dx&=\int_{\Om}\int_t^{t+s}
{\partial}_tu_{\s}(x,\tau)\phi \xi_R \,d\tau \,dx=\int_{\Om}\int_t^{t+s}
\Delta f^{\s}(x,u_{\s})\phi \xi_R
 \,d\tau \,dx\\
&=\int_{\Om}\int_t^{t+s}f^{\s}(x,u_{\s})\Delta(\phi \xi_R)\,d\tau\,dx
\\
&=\int_{\Om}\int_t^{t+s}\bigg{\{} f^{\s}(x,u_{\s})\Delta \phi \xi_R 
+2f^{\s}(x,u_{\s})\nabla \phi \cdot 
\nabla \xi_R +f^{\s}(x,u_{\s})\phi \Delta \xi_R 
\bigg{\}} \,d\tau\,dx,
\end{align*}
and this implies

\begin{align}\label{e07}
\bigg| \int_{\Om}w(x)\phi(x)\xi_R(x)\,dx\bigg|&\le
M\bigg{\{} \|\Delta \phi {\|}_{\infty}+2\|\nabla \phi 
{\|}_{\infty}
+\|\phi {\|}_{\infty} \bigg{\}}\|\xi_R {\|}_{1}s.
\end{align}

Taking $\phi = (\sgn\,w)\ast {\rho}_{\delta}$, with $\sgn w$ extended by 0 outside $\Om$,  and observing that 
$\|\nabla \phi{\|}_{\infty} \le \frac{c}{\delta}, 
\|\Delta \phi {\|}_{\infty}\le 
\frac{c}{\delta^2}$ and $\|\phi{\|}_{\infty}\le 1$, where $c$ only 
depends on the dimension, we get

\begin{align*}
\int_{\Om}|w(x)|\xi_R(x)\,dx&=\bigg(\int_{\Om}w(x)\,\sgn(w(x))\,
\xi_R(x)\,dx\bigg)\int_{{\re}^n}\rho(y)\,dy\\
&=\int_{\Om\times \R^n}w(x-\delta y)\,\sgn(w(x-\delta y))\,
\xi_R(x-\delta y)\rho(y)\,dx\,dy,
\end{align*}

and

\begin{align*}
\int_{\Om}w(x)\varphi(x)\xi_R(x)\,dx&=\int_{\Om}w(x)\xi_R(x)
\bigg(\int_{\Om}\sgn(w(y))\,{\rho}_{\delta}(x-y)\,dy\bigg)\,dx\\
&=\int_{\Om}w(x)\xi_R(x)\bigg(\int_{{\re}^n}\sgn(w(x-\delta y))\rho(y)
\,dy\bigg)\,dx\\
&=\int_{\Om\times \R^n}w(x)\,\xi_R(x)\,\sgn(w(x-\delta y))\,\rho(y)\,dx\,dy.
\end{align*}

Hence,
\begin{align*}
&\int\limits_{\Om}|w(x)|\xi_R(x)\,dx-\int_{\Om}w(x)\phi(x)\xi_R(x)
\,dx\\
&=\int\limits_{\Om\times\R^n}\bigg{\{}w(x-\delta y)\,\sgn(w(x-\delta y))\,
\xi_R(x-\delta y)-w(x)\,\xi_R(x)\,\sgn(w(x-\delta y))\bigg{\}}\rho(y)
\,dx\,dy\\
&=\int\limits_{\Om\times \R^n}\bigg{\{}\big[w(x-\delta y)-w(x)\big]\,
\sgn(w(x-\delta y))\xi_R(x)+\big[\xi_R(x-\delta y)-\xi_R(x)\big]\,
\sgn(w(x-\delta y))\,w(x-\delta y)\bigg{\}}\rho(y)\,dx\,dy.
\end{align*}

Therefore,
\begin{equation}\label{e08}
\bigg|\int_{\Om}|w(x)|\xi_R(x)\,dx-\int_{\Om}w(x)\phi(x) \xi_R(x)\,dx
\bigg|\le (2c_0+\|\nabla\xi_R\|_1)\delta.
\end{equation}

Thus, we conclude from~\eqref{e08} and from~\eqref{e07} that

\begin{equation*}
\int_{\Om}|w(x)|\xi_R(x)\,dx\le\,(2c_0+\|\nabla\xi_R\|_1)\delta+ s\,M \bigg{\{}
\frac{1}{\delta^2}+\frac{2}{\delta}+1\bigg{\}}\|\xi_R{\|}_1,
\end{equation*}
for all $0<\delta<1$, which completes the proof.

\end{proof}

\begin{theorem}[Existence]\label{t01}
Let $u_\s$ be the unique solution of \eqref{e04},\eqref{e04'},\eqref{e04''}, and $u_0\in W^{1,\infty}(\Om)$. There exists $u\in L^{\infty}(\Om\X[0,\infty))$ such that, passing to a 
suitable subsequence if necessary, 
$u_{\s}\to u$ a.e.\ in $\Om\X[0,\infty)$ as $\s \to 0$.
Moreover, $u$ is the unique entropy solution of~\eqref{e01}-\eqref{e01''}. Consequently, using the stability in $L_\loc^1(\Om)$ of entropy solutions, we obtain the existence of a unique entropy solution also for $u_0\in L^\infty(\Om)$.
\end{theorem}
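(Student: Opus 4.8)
The plan is to construct the entropy solution for $u_0\in W^{1,\infty}(\Om)$ as an a.e.\ limit of the classical solutions $u_\s$ of the regularized problems \eqref{e04}--\eqref{e04''}, to identify this limit as a weak (resp.\ entropy) solution, and then to reach the general case $u_0\in L^\infty(\Om)$ by density combined with the $L^1_\loc$-stability estimate \eqref{eTA4}. The first step is compactness. When $f$ is of type~2 this is exactly Theorem~\ref{l04}: the uniform bound \eqref{eTcomp1} together with the $\xi_R$-weighted spatial and temporal equicontinuity estimates \eqref{e05}--\eqref{e06} gives, by the Fréchet--Kolmogorov criterion, precompactness of $\{u_\s\}$ in $L^1_\loc(Q)$, so that along a subsequence $u_\s\to u$ in $L^1_\loc(Q)$ and a.e., with $\|u\|_{L^\infty(Q)}\le M_0$. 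When $f$ is of type~1 one argues analogously: testing \eqref{e04} with $f^\s(x,u_\s)$ against a spatial cut-off, using $\partial_t u_\s\,f^\s(x,u_\s)=\partial_t\Psi^\s(x,u_\s)$ with $\Psi^\s(x,u):=\int_0^u f^\s(x,r)\,dr$ and absorbing the first-order term by the $\s$-uniform bound $|f^\s(x,u_\s)|\le M$ (itself obtained, as in Theorem~\ref{l04}, by comparison with the stationary solutions $g_\s(x,\pm M)$ via Corollary~\ref{cor1} and Remark~\ref{obs2}), produces a bound on $f^\s(x,u_\s)$ in $L^2_\loc((0,\infty);H^1_\loc(\bar\Om))$ independent of $\s$; combined with the time equicontinuity from step~4 of the proof of Theorem~\ref{l04} (which does not use the type), this makes $\{f^\s(x,u_\s)\}$ precompact in $L^1_\loc(Q)$, and since $u_\s=g^\s(x,f^\s(x,u_\s))$ with $g^\s(x,\cdot)$ converging locally uniformly to the inverse $g(x,\cdot)$ of $f(x,\cdot)$, we again extract $u_\s\to u$ a.e.

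The second step passes to the limit. The energy estimate above holds for both types and, with the a.e.\ convergence $u_\s\to u$ and the local uniform convergence $f^\s\to f$ on $\Om\X\R$, gives $f^\s(x,u_\s)\to f(x,u)$ a.e.\ and weakly in $L^2_\loc((0,\infty);H^1_\loc(\bar\Om))$, and $\partial_t u_\s\to\partial_t u$ in $L^2_\loc((0,\infty);H^{-1}_\loc(\bar\Om))$. Since $u_{0,\s}\to u_0$ in $L^1_\loc(\Om)$ with a uniform $L^\infty$ bound, one passes to the limit in the identity \eqref{e02} written for $u_\s$, $f^\s$, $u_{0,\s}$ to obtain \eqref{e02} for $u$. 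The boundary condition follows because $u_\s|\po\Om=0$ forces $f^\s(x,u_\s)|\po\Om=f^\s(x,0)=\tilde f_\s(x)$, which tends to $0$ uniformly on a neighbourhood of $\po\Om$ since $f(\cdot,0)$ is continuous and vanishes on $\po\Om$ by \textbf{(f1.2)}, \textbf{(f2.3)}; hence $f(x,u)\in L^2_\loc((0,\infty);H^1_{0,\loc}(\bar\Om))$ and $u$ is a weak solution. For $f$ of type~2 one must in addition verify \eqref{eD2''}: since $u_\s$ is a classical solution of the \emph{nondegenerate} equation \eqref{e04}, the manipulations of Lemmas~\ref{l01}--\ref{l02'} applied to \eqref{e04} yield, for $F(k)\notin E$ and $0\le\varphi\in C_c^\infty(Q)$, the analogue of \eqref{eD2''} with $f$ replaced by $f^\s$, the entropy defect being the nonnegative parabolic dissipation (the spurious lower-order term being disposed of, as in the proof of Lemma~\ref{l02'}, through the auxiliary functions $\mathscr F_\d$ and $\GG_\d$); letting $\s\to0$ and using the convergences just listed, together with $\Delta f^\s(x,k)\to\Delta f(x,k)$ in $L^1_\loc$ (valid because $h,S\in W^{2,\infty}(\Om)$), yields \eqref{eD2''} for $u$. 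For $f$ of type~1 nothing further is needed, since weak and entropy solutions coincide. In all cases $u$ is an entropy solution of \eqref{e01}--\eqref{e01''}.

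The third step settles uniqueness and the general datum. By the $L^1_\loc$-stability estimate of Theorem~\ref{T:A4}, the entropy solution with a prescribed datum is unique, so the limit $u$ is independent of the subsequence and the whole family $u_\s$ converges to $u$. Finally, for $u_0\in L^\infty(\Om)$ choose $u_0^m\in W^{1,\infty}(\Om)$ with $u_0^m\to u_0$ in $L^1_\loc(\Om)$ and $\|u_0^m\|_{L^\infty}\le\|u_0\|_{L^\infty}$, and let $u^m$ be the (unique) entropy solution with datum $u_0^m$ constructed above; by \eqref{eTA4} and Remark~\ref{obs2} the sequence $\{u^m\}$ is Cauchy in $C_\loc([0,\infty);L^1_\loc(\Om))$, and its limit $u$ is, by the argument of the second step — whose $L^\infty$ and energy bounds depend only on $\|u_0\|_{L^\infty}$ — an entropy solution with datum $u_0$, unique again by Theorem~\ref{T:A4}.

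The main obstacle will be the $\s$-uniform energy bound on $f^\s(x,u_\s)$ in $L^2_\loc((0,\infty);H^1_\loc(\bar\Om))$, where the explicit $x$-dependence of $f^\s$ and the behaviour near $\po\Om$ must be kept under control, and the passage to the limit in the regularized entropy inequality of the type~2 case, for which it is essential that the parabolic dissipation retains a definite sign and that the lower-order contributions vanish.
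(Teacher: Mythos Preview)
Your type~2 argument and steps~2--3 follow the paper closely. The gap is in the type~1 compactness.

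You assert that step~4 of Theorem~\ref{l04} (the time equicontinuity) ``does not use the type''. It does: the bound \eqref{e06} carries the constant $c_1$ from the spatial estimate \eqref{e05}, and the derivation of \eqref{e05} (steps~1--3 of that proof) hinges on the structural inequality $f_u^\s-\gamma_0\sum_i|f_{x_iu}^\s|\ge 0$, which for type~2 reduces to $h-\gamma_0\sum_i|h_{x_i}|\ge 0$ but has no reason to hold for a general type~1 pressure. Nor can you substitute the spatial regularity of $v_\s:=f^\s(x,u_\s)$ (from your $L^2_tH^1_x$ energy bound) for that of $u_\s$: the step~4 argument needs translation equicontinuity of $u_\s$ itself, and pulling it back from $v_\s$ via $u_\s=g^\s(x,v_\s)$ fails because $g^\s_v=1/f^\s_u$ is \emph{not} uniformly bounded as $\s\to 0$ where $f_u$ vanishes.

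The paper's type~1 route exploits the \emph{opposite} inequality. Since $f(x,\cdot)$ is locally Lipschitz uniformly in $x$, one has $f^\s_u\le C$ on the relevant range, hence $g^\s_v\ge\delta_0>0$ uniformly in $\s$. Writing the equation as $g^\s_v(x,v_\s)\,\partial_t v_\s-\Delta v_\s=0$ and multiplying by $e^{-|x|}\partial_t v_\s$ yields
\[
\frac{\delta_0}{2}\int_\Om(\partial_t v_\s)^2e^{-|x|}\,dx+\frac12\frac{d}{dt}\int_\Om|\nabla v_\s|^2e^{-|x|}\,dx\le C(\delta_0)\int_\Om|\nabla v_\s|^2e^{-|x|}\,dx,
\]
so Gronwall bounds $v_\s$ in $W^{1,2}_\loc(\bar\Om\times[0,\infty))$ uniformly in $\s$---crucially including $\partial_t v_\s$. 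Compactness of $v_\s$ in $L^1_\loc$ then follows by Sobolev embedding, and $u_\s=g^\s(x,v_\s)\to g(x,v)$ by the locally uniform convergence $g^\s\to g$ (which the paper checks separately). Your $L^2_tH^1_x$ estimate controls only $\nabla_x v_\s$; the missing ingredient is this direct $L^2$ bound on $\partial_t v_\s$, available precisely because the type~1 hypothesis makes the $v_\s$-equation uniformly parabolic from above.
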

\begin{proof}

1. We first treat the case where $f(x,u)$ is of type~1 and $u_0\in W^{1,\infty}(\Om)$.  Let $g^\s(x,v)$ be such that 
\begin{equation}\label{et01.0}
g^\s(x,f^\s(x,u))=u, \quad   f^\s(x,g^\s(x,v))=v. 
\end{equation}
We claim that  $g^\s(x,v)$ converges locally uniformly in $\Om\X\R$ to $g(x,v)$ satisfying $g(x,f(x,u))=u$ and $f(x,g(x,v))=v$. 

Indeed, by construction $f^\s(x,u)$ clearly converges locally uniformly to $f(x,u)$. Now, given any compact $K\subset\Om$ and a bounded interval $I\subset\R$, $g^\s(x,v)$ is uniformly bounded on $K\X I$, by  \eqref{et01.0}, and, so, $g^\s(x,v)\in J$, for some bounded interval $J$, for $(x,v)\in K\X I$. Now, for $\s$ sufficiently close to $0$, $f^\s(x,u)$ is arbitrarily close to $f(x,u)$, uniformly for $(x,u)\in K\X J$. Therefore, given $\ve'>0$, there exists $\s_0>0$, such that, if $0<\s<\s_0$, $|f^\s(x,g^\s(x,v))-f(x,g^\s(x,v))|<\ve'$, that is
$|f(x,g^\s(x,v))-v|<\ve'$, which implies that $g(x,v-\ve')<g^\s(x,v)<g(x,v+\ve')$ for all  $(x,v)\in K\X I$. Hence, given $\ve>0$, we choose $\ve'>0$ such that $g(x,v+\ve')<g(x,v)+\ve$
and $g(x,v-\ve')>g(x,v)-\ve$, for all $(x,v)\in K\X I$, to get $|g^\s(x,v)-g(x,v)|<\ve$, for $0<\s<\s_0$, for all $(x,v)\in K\X I$, proving the assertion.   

2. Now, let $v_\s(x,t)=f^\s(x,u_\s(x,t))$. We have
\begin{equation}\label{et01.1}
 g_v^\s (x,v_\s(x,t))\po_t v_\s- \D v_\s =0, \quad (x,t)\in\Om\X(0,\infty).
 \end{equation}
 We multiply \eqref{et01.1} by $e^{-|x|} \po_t v_\s$, integrate over $\Om$, and we use the fact that $g_v^\s(x,v)>\d_0>0$, for some $\d_0>0$ independent of $\s$, for $(x,v)\in\Om\X[-M,M]$, with $M>0$ sufficiently large so that $\|v_\s\|_\infty<M$,  to obtain 
 \begin{equation} \label{et01.2}
 \frac{\d_0}2\int_{\Om}(\po_t v_\s(x,t))^2 e^{-|x|}\,dx+\frac12\frac{d}{dt}\int_{\Om} |\nabla v_\s(x,t)|^2 e^{-|x|}\,dt  \le  C(\d_0)\int_{\Om} |\nabla v_\s|^2 e^{-|x|}\,dx.
 \end{equation}
 By Gronwall inequality we then obtain 
 \begin{equation}\label{et01.3}
 \int_0^T\int_{\Om}\bigl((\po_t v_\s(x,t))^2+ |\nabla v_\s(x,t)|^2\bigr) e^{-|x|}\,dx\,dt\le C(T),
 \end{equation}
 for $0\le t\le T$, for some $C(T)>0$ independent of $\s$, for all $T>0$.  Inequality \eqref{et01.3}, indicates that $v_\s$ is uniformly bounded in $W_\loc^{1,2}(\bar\Om\X(0,\infty))$. Therefore, by  the well known Sobolev embedding, we may extract a subsequence of $v_\s(x,t)$,  still denoted by $v_\s(x,t)$, converging, in  $L_\loc^1(\Om\X(0,\infty))$, to some 
 $v(x,t)\in  W_\loc^{1,2}(\bar\Om\X(0,\infty)$. Since $u_\s(x,t)=g^\s(x,v_\s(x,t))$, we have that $u_\s(x,t)$ converges to $u(x,t)=g(x,v(x,t))$ in $L_\loc^1(\Om\X(0,\infty))$.

3. Now, we assume $f(x,u)$ is of type~2 and $u_0\in W^{1,\infty}(\Om)$.
By Theorem~\ref{l04}, for each $t>0$,
$\{u_{\s}(t) {\}}_{\s>0}$ is a sequence uniformly bounded in $BV_\loc(\Om)$, and it is an equicontinuous family in $C([0,T];L^1_\loc(\Om))$. Therefore, by the well known compactness of the embedding $BV_\loc(\Om)\subset L_\loc^1(\Om)$ (see, e.g., \cite{EG}), there exists 
$u\in L^{\infty}(\Om\X(0,\infty))$ such that, passing to a subsequence if necessary, 
$u_{\s} \to u$ in $L^1_{\loc}(\Om\X(0,\infty))$. 

4. In this step we prove that, for  any $R>0$,   $f^\s (x,u_\s(x,t))$ is uniformly bounded in $L^2([0,T]; H_0^1(\Om\cap B(0;R)))$ by a constant $C(R,T,\|u_0\|_\infty)$, depending only on $R,T,\|u_0\|_\infty$, in particular, not depending on $\|\nabla u_0\|_\infty$. 

For this, we  multiply \eqref{e04} by $f^\s(x,u_\s(x,t)) e^{-|x|}$ and integrate in $\Om\X(0,T)$, using iteration by parts to get 
$$
\int_0^T\int_{\Om}\bigg{\{}{\partial}_t u_{\s} f^{\s}(x,u_{\s})
e^{-|x|}+\nabla f^{\s}(x,u_{\s})
\cdot \nabla(f^{\s}(x,u_{\s})e^{-|x|})\bigg{\}} \,dx\,dt
=0,
$$
which yields  
\begin{align*}
&\int_0^T\int_{\Om}e^{-|x|}\, {\partial}_t\bigg[\int_0^{u_{\s}}
f^{\s}(x,s)ds \bigg]\,dx\,dt+\int_0^T\int_{\Om}
|\nabla f^{\s}(x,u_{\s})|^2 e^{-|x|}
+f^{\s}(x,u_{\s})\nabla f^{\s}(x,u_{\s})\cdot \nabla e^{-|x|} \bigg{\}}\,dx\,dt=0,
\end{align*}
and so 
\begin{align*}
&\int_0^T\int_{\Om}|\nabla f^{\s}(x,u_{\s})|^2 e^{-|x|} \,dx\,dt \le C\left\{\int_0^T\int_{\Om} |f^{\s}(x,u_{\s})|^2 e^{-|x|}\,dx\,dt +\int_{\Om}e^{-|x|}\bigg|\int_{u_{0}}^{u_{\s}(T)}
f^{\s}(x,s)\,ds\bigg|\,dx\right\}.
\end{align*}
Therefore,
$$
\int_0^T\int_{\Om\cap B(0;R)}|\nabla f^{\s}(x,u_{\s})|^2
\,dx\,dt \le
c(T,R, \|u_0{\|}_{\infty})
$$
for all $0<\s<1$, as claimed. 
In particular, $f^\s(x, u_\s(x,t))$ is uniformly bounded in $L^2_\loc((0,\infty); H_{0,\loc}^1(\bar\Om))$, and so $f(x,u(x,t))\in L^2_\loc((0,\infty); H_{0,\loc}^1(\bar\Om))$. That $u(x,t)$ is an entropy solution of \eqref{e01}-\eqref{e01''} follows from the latter and the convergence of $u_\s(x,t)$ in $L^1_\loc(\Om\X(0,\infty))$. 

5. Finally, when $u_0\in L^{\infty}(\Om)$, we may approximate $u_0$ in $L_\loc^1(\Om)$ by a sequence $u_{0k}\in W^{1,\infty}(\Om)$ obtaining a sequence $u_k$ of entropy solutions of \eqref{e01}-\eqref{e01''}, with initial data $u_0=u_{0k}$, and then use the stability Theorem~\ref{T:A4} to deduce that $u_k$ is a Cauchy sequence in 
$L_\loc^1(\Om\X(0,\infty))$.
We then easily conclude that the limit $u\in L^\infty(\Om\X(0,\infty))$ of the sequence $u_k$ is an entropy solution of \eqref{e01}-\eqref{e01''}.

\end{proof}

We close this section by establishing an elementary result which will be needed in the following sections.

\begin{lemma}\label{L:g} Let $f(x,u)$ be either of type 1 or type 2, and let $g(x,v)$ be the function  left-continuous on $v$  determined by the relation $f(x,g(x,v))=v$, for all $x\in\Om,\ v\in\R$. Then,   
$\lim_{v\to\pm\infty}g(x,v)=\pm\infty$, uniformly in $x$.
\end{lemma}

\begin{proof} Let us prove that $\lim_{v\to+\infty}g(x,v)=+\infty$ uniformly in $x$. From {\bf(f1.1)}, when $f$ of type 1,  or {\bf(f2.1)}, when $f$ is of type 2, there exist $u_1<0< u_2$ such that $f(x,u_1)\le0\le f(x,u_2)$, for all $x$. 
Now, given any $M>0$,  if $M':=\max\{|u_1|,u_2,M\}$,  then, for $M'\le u\le 2M'$, we have $0\le f(x,u)\le f(x,u)-f(x,u_2) \le 3CM'$,
where $C>0$ is the uniform in $x$ Lipschitz constant of $f(x,\cdot)$ on $[-M',2M']$. Hence, $g(x,3CM')\ge u\ge M$, for all $x$, and, since $g(x,\cdot)$ is increasing, we have
$g(x,v)> M$ for all $v>3CM'$, uniformly in $x$. This concludes the proof that    $\lim_{v\to+\infty}g(x,v)=+\infty$ uniformly in $x$; the proof that $\lim_{v\to-\infty}g(x,v)=-\infty$, uniformly in $x$, is completely similar.
\end{proof}

\section{Homogenization of Porous Medium Type Equations: \\ Unbounded domains, general ergodic algebras and well-prepared initial data}\label{S:5}

In this and the next sections, we consider the following homogenization problem 

\begin{equation} \label{p1}
\begin{cases}
{\partial}_tu=\Delta f(x,\frac{x}{\ve}, u), &(x,t)\in \Om\X(0,\infty),\\ 
u(x,0)=u_{0}(x,\frac{x}{\ve}), &x\in \Om,\\
 f(x,\frac{x}{\ve},u(x,t))=0, &(x,t)\in\po\Om\X(0,\infty),
\end{cases}
\end{equation}
where $f:\Om\X\R^n\X\R\to\R$ is  a continuous function such that, for each $(x,z)\in\Om\X\R^n$, $f(x,z,\cdot)$ is locally Lipschitz, uniformly with respect to $(x,z)$, and, for each 
$(x,u)\in\Om\X\R$, $f(x,\cdot,u)\in\AA(\R^n)$, where $\AA(\R^n)$ is some given ergodic algebra. Here, as in the previous section, $\Om\subset\R^n$ is an open set, possibly unbounded, with  smooth boundary. 

In this section we will be concerned with the case where $\AA(\R^n)$ may be a general ergodic algebra, but we will need to restrict our initial data to the class of well-prepared ones, which we will define subsequently.  Following the discussion in the previous section, we consider two different situations, according to whether, for all $\ve>0$,   $f_\ve(x,u):= f(x,\frac{x}{\ve},u)$ is of type~1 or of type~2, as defined in the previous section.

For the first situation, we have the following  assumption. 
\begin{enumerate}
\item[{\bf(h1.1)}]  In the case where $f_\ve(x,u)$ is of type~1, for all $\ve>0$, we assume that $f(\cdot,z,\cdot)$ satisfies {\bf(f1.1)} uniformly with respect to $z\in\R^n$, and 
$f(x,z,0)=0$, for all $(x,z)\in\po\Om\X\R^n$. Also, let $g(x,z,v)$ be such that $g(x,z,f(x,z,u))=u$ and $f(x,z,g(x,z,v))=v$, so that $g:\bar\Om\X\R^n\X\R\to\R$ is a continuous function. We assume that, for each $(x,v)\in\bar \Om\X\R$, $g(x,\cdot,v)\in\AA(\R^n)$.  
\end{enumerate}

We  define the function $\bar g:\bar \Om\X\re\to\re$ by
\begin{equation}\label{egbar}
\bar g(x,v)=\int_{\KK} g(x,z,v)\,d\mm(z),
\end{equation}
where $\KK,\mm$ are the compact space and the invariant measure associated with the ergodic algebra $\AA(\R^n)$, and  $\bar f:\bar\Om\X\to\R$ by
\begin{equation}\label{efbar}
\bar g(x,\bar f(x,u))=u.
\end{equation}

We have the following lemma.

\begin{lemma}\label{L:6.1} If  $f_\ve(x,u)$ is of type~1 for all $\ve>0$ and {\bf(h1.1)} holds, then $\bar f:\bar \Om\X\R\to\R$, defined by \eqref{efbar}, is of type~1.
\end{lemma}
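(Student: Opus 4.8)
The plan is to verify that $\bar f(x,\cdot)$ inherits each of the defining properties of a type~1 pressure function, by first establishing the corresponding properties for $\bar g(x,\cdot)$ and then passing to the inverse. First I would check that $\bar g(x,\cdot)$ is well defined and continuous: for each fixed $(x,v)$ the function $z\mapsto g(x,z,v)$ belongs to $\AA(\R^n)$ by {\bf(h1.1)} and hence extends to a continuous function on $\KK$, so the integral in \eqref{egbar} against the probability measure $\mm$ makes sense; continuity in $(x,v)$ follows from the (locally uniform) continuity of $g$ together with dominated convergence, using that $g$ maps compact subsets of $\bar\Om\X\R^n\X\R$ into bounded sets. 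Next I would show $\bar g(x,\cdot)$ is \emph{strictly} increasing: since $f(x,z,\cdot)$ is strictly increasing for every $z$, so is its inverse $g(x,z,\cdot)$, whence $v_1<v_2$ implies $g(x,z,v_1)<g(x,z,v_2)$ pointwise in $z$, and integrating the strict inequality against $\mm$ (a probability measure, so total mass $1>0$) gives $\bar g(x,v_1)<\bar g(x,v_2)$.

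Then I would record that $\bar g(x,\cdot)$ is locally Lipschitz uniformly in $x$: on any bounded $v$-interval $I$, $g(x,z,\cdot)$ has a Lipschitz constant $L_I$ independent of $(x,z)$ — this is the uniform counterpart of the local Lipschitz bound on $g$, which in turn comes from the lower bound on the slope of $f(x,z,\cdot)$ away from its degeneracy set; integrating over $\KK$ preserves this bound, so $\bar g(x,\cdot)$ is $L_I$-Lipschitz on $I$, uniformly in $x$. I would also check the growth condition $\lim_{v\to\pm\infty}\bar g(x,v)=\pm\infty$ uniformly in $x$: by Lemma~\ref{L:g} applied to $f(x,z,\cdot)$ — whose hypotheses hold uniformly in $z$ by {\bf(h1.1)} — we have $g(x,z,v)\to\pm\infty$ as $v\to\pm\infty$ uniformly in $(x,z)$, and integrating against the probability measure $\mm$ transfers this to $\bar g$. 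Finally, the boundary condition: for $x\in\po\Om$ we have $f(x,z,0)=0$ for all $z$, hence $g(x,z,0)=0$ for all $z$, hence $\bar g(x,0)=0$, i.e. $\bar f(x,0)=0$ for $x\in\po\Om$, which is {\bf(f1.2)} for $\bar f$.

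Having established that $\bar g(x,\cdot):\R\to\R$ is, for each $x$, a continuous strictly increasing surjection (surjectivity from the growth condition and the intermediate value theorem), it has a well-defined continuous strictly increasing inverse, which is precisely $\bar f(x,\cdot)$ via \eqref{efbar}; continuity of $\bar f$ jointly in $(x,u)$ follows from the joint continuity of $\bar g$ and a standard argument (as in Step~1 of the proof of Theorem~\ref{t01}). The local Lipschitz bound on $\bar f(x,\cdot)$, uniform in $x$, is the reciprocal statement: since $\bar g(x,\cdot)$ has, on compact intervals, a \emph{positive lower bound} on its difference quotients — this is the extra point — $\bar f(x,\cdot)$ is Lipschitz on the corresponding compact $u$-intervals, uniformly in $x$. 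This positivity of the lower difference quotient is what I expect to be the only nonroutine point: it does \emph{not} follow merely from $\bar g(x,\cdot)$ being strictly increasing, but I can get it by noting that on any bounded interval $I$ the slope of $f(x,z,\cdot)$ is bounded above by some $C_I$ uniformly in $(x,z)$ (local Lipschitz continuity of $f$), so the slope of $g(x,z,\cdot)$ is bounded below by $1/C_I$, and averaging over $\KK$ preserves this lower bound; hence $\bar g(x,v_2)-\bar g(x,v_1)\ge (v_2-v_1)/C_I$ for $v_1<v_2$ in $I$, which is exactly the uniform local Lipschitz bound on $\bar f$. Assembling these points gives that $\bar f$ satisfies {\bf(f1.1)} and {\bf(f1.2)}, i.e. $\bar f$ is of type~1.
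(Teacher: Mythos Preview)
Your approach is essentially the same as the paper's, and the load-bearing parts are correct. One claim you make, however, is false and should be dropped: you assert that $\bar g(x,\cdot)$ is locally Lipschitz uniformly in $x$, coming from ``the lower bound on the slope of $f(x,z,\cdot)$ away from its degeneracy set.'' Type~1 functions need not have any such lower bound across the degeneracy set; for instance $f(u)=u^3$ is type~1, but $g(v)=v^{1/3}$ is not Lipschitz at $0$. Fortunately you never use this claim: what you need (and correctly establish later) is a \emph{lower} bound on the difference quotients of $\bar g$, obtained from the \emph{upper} Lipschitz bound on $f$, and that is exactly how the paper proceeds as well (its inequality \eqref{eLip}). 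So simply delete the paragraph about $\bar g$ being Lipschitz.

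One useful point you include that the paper's proof omits is the verification of {\bf(f1.2)} for $\bar f$: from $f(x,z,0)=0$ on $\po\Om\times\R^n$ one gets $g(x,z,0)=0$, hence $\bar g(x,0)=0$, hence $\bar f(x,0)=0$ on $\po\Om$.
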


\begin{proof}  We first observe that the function $\bar g:\bar\Om\X\R\to\R$, defined in \eqref{egbar} is continuous on $\bar\Om\X\R$, which follows directly from the continuity of $g$.  
Clearly, from the fact that $g$ is strictly increasing in $v$, it follows that $\bar g$ is strictly increasing in $v$, and so, $\bar f$ is well defined by \eqref{efbar}, and, from the continuity of $\bar g$, we deduce the continuity of 
$\bar f$ on $\bar\Om\X\R$.  Indeed, since {\bf(f1.1)} holds for $f(\cdot,z,\cdot)$, uniformly in $z\in\R^n$, the assertion of Lemma~\ref{L:g} also holds for $g(\cdot,z,\cdot)$ uniformly in 
$z\in\R^n$, and, hence, also for $\bar g(x,v)$.  In particular, $\bar g(x,v)$ remains bounded if, and only if, $v$ remains confined on a bounded subset of $\R$, uniformly for $x\in\bar\Om$. Thence, from \eqref{efbar}, if $((x_k,u_k))_{k\in\N}$ is a sequence in $\bar \Om\X\R$, converging to $(x_*,u_*)$, then $(\bar f(x_k,u_k))_{k\in\N}$ remains bounded, and, passing to any converging subsequence, still denoted $(\bar f(x_k,u_k))$, we conclude that $\bar f(x_k,u_k)$ must converge to $\bar f(x_*,u_*)$, which means that the whole sequence $\bar f(x_k,u_k)$ converges to $\bar f(x_*,u_*)$, proving the continuity of $\bar f$ in $\bar\Om\X\R$. 
We also see that 
$$
\lim_{u\to\pm\infty} \bar f(x,u)=\pm\infty,
$$
uniformly with respect to $x\in\bar\Om$, since this is true for $\bar g$, as we have just seen,  and we may apply the same reasoning as in the proof of Lemma~\ref{L:g}.  

As for the local Lipschitz continuity of $\bar f(x,\cdot)$, uniformly with respect to $x\in\bar\Om$, we have the following. Given any $M>0$, let $K>0$ be such that $f(x,z,-K)\le \bar f(x,-M)$ and $f(x,z,K)\ge \bar f(x,M)$, for all $(x,z)\in\bar\Om\X\KK$, and let $C>0$ be the uniform Lipschitz constant of $f(x,z,\cdot)$ on the interval $[-K,K]$, for all $(x,z)\in\bar\Om\X\KK$. We have
$$
|g(x,z,v_1)-g(x,z,v_2)|\ge C^{-1}|v_1-v_2|, \qquad \text{for $v_1,v_2\in [\bar f(x,-M),\bar f(x,M)]$}.
$$  
Therefore, for $u_1,u_2\in[-M,M]$, using the monotonicity of $g(x,z,\cdot)$, we get
\begin{equation}\label{eLip}
|u_1-u_2|= \int_{\KK} |g(x,z,\bar f(x,u_1))-g(x,z,\bar f(x,u_2))|\,d\mm(z) \ge C^{-1}|\bar f(x,u_1)-\bar f(x,u_2)|,
\end{equation}
for all $x\in\bar\Om$, which concludes the proof.
\end{proof}

Let us now analyze the case where $f_\ve$ is of type~2, for all $\ve>0$.
 
\begin{enumerate}
\item[{\bf(h2.1)}] In the case where $f_\ve$ is of type 2, for all $\ve>0$,  we assume that  $f(x,z,u)=h(x,z)F(u)+S(x,z)$, with $F$ satisfying {\bf(f2.1)}, $h_\ve, S_\ve$ satisfying {\bf(f2.2)}, {\bf(f2.3)}, for each $\ve>0$, where $h_\ve(x):=h(x,\frac{x}{\ve})$,  $S_\ve(x):=S(x,\frac{x}{\ve})$,  and we assume further that $h,S: \bar\Om\X\R^n\to \R$ are bounded continuous functions, with  $h(x,z)>\d_0>0$ for all $(x,z)\in\bar\Om\X\R^n$, and $S(x,z)=0$ for all $(x,z)\in\po\Om\X\R^n$.  Moreover, for each $x\in\bar\Om$, $h(x,\cdot),\, S(x,\cdot)$  belong to a given ergodic algebra  $\AA(\re^n)$. \\
 We define $g(x,z,v):=G\left(\frac{v-S(x,z)}{h(x,z)}\right)$, where $G:\R\to\R$ is the right-continuous function 
satisfying $F(G(v))=v$,   as in Definition~\ref{D2'}, and we let $E$ be the (countable) set of discontinuities of $G$.\\
 We assume further that, for all $\a\in E$, setting   $\psi_\a(x,z):=\a h(x,z)+S(x,z)$,  we have 
\begin{equation}\label{em0}
\mm\left(\{z\in\KK\,:\, \psi_\a(x,z)=v\}\right)=0, \quad\text{for all $(x,v)\in\bar\Om\X\R$}.
\end{equation}
\end{enumerate}

We remark that Lemma~\ref{L:zeromeasure} gives sufficient conditions in order for $\psi_\a$ to satisfy \eqref{em0}.  More specifically, the item (ii) in the statement of 
Lemma~\ref{L:zeromeasure}, for example, asserts that the condition is satisfied if, for each $x\in\bar\Om$,  $\psi_\a(x,\cdot), \nabla_z\psi_\a(x,\cdot),\nabla_z^2\psi_\a(x,\cdot)\in\AA(\R^n)$ and $|\nabla_z\psi_\a(x,z)|^2+|\nabla_z^2\psi_\a(x,z)|^2>\d_x>0$, for all $z\in\R^n$. We recall also that $E$ is countable and may be viewed also as a bounded set, since we will be dealing with sequences of functions assuming values in a fixed bounded interval of $\R$. 

Assumption \eqref{em0},  in {\bf(h2.1)}, makes it possible  to define $\bar g$ again by \eqref{egbar}, and we have the following.

\begin{lemma}\label{L:6.2}  In the case where $f_\ve$ is of type~2, for all $\ve>0$, and {\bf(h2.1)} holds, $\bar g(x,\cdot):\R\to\R$ is strictly increasing and continuous, for any $x\in\bar\Om$, with  $\lim_{v\to\pm\infty} \bar g(x,v)=\pm\infty$, uniformly with respect to $x\in\bar \Om$.   Moreover, $\bar f:\bar\Om\X\R\to\R$, defined by \eqref{efbar}, is of type~1.
\end{lemma}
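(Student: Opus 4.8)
The plan is to first establish the three claimed properties of $\bar g(x,\cdot)$ — strict monotonicity, continuity, and the uniform behaviour at $\pm\infty$ — and then deduce that its inverse $\bar f(x,\cdot)$ satisfies Definition~\ref{D1'}, closely following the argument of Lemma~\ref{L:6.1}. For the set-up I would first record that for each fixed $(x,v)$ the map $z\mapsto\frac{v-S(x,z)}{h(x,z)}$ extends to a continuous function on $\KK$ with values in a compact interval (using $h(x,\cdot),S(x,\cdot)\in\AA(\R^n)\cong C(\KK)$ and that $h\ge\d_0>0$ persists on $\KK$ by Theorem~\ref{T1}), so that $g(x,\cdot,v)=G\circ\bigl(\tfrac{v-S(x,\cdot)}{h(x,\cdot)}\bigr)$ is a bounded Borel function on $\KK$ and $\bar g(x,v)$ in \eqref{egbar} is well defined.

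Strict monotonicity: since $F$ is continuous, nondecreasing, with $F(\pm\infty)=\pm\infty$, its range is all of $\R$ and $G(r)=\min\{u:F(u)=r\}$ is \emph{strictly} increasing (if $r_1<r_2$ and $F(u_i)=r_i$ then necessarily $u_1<u_2$, hence the minima are ordered). For $v_1<v_2$ one has $\tfrac{v_2-S}{h}-\tfrac{v_1-S}{h}=\tfrac{v_2-v_1}{h}>0$ for every $z$, so $g(x,z,v_1)<g(x,z,v_2)$ pointwise, and integrating over $\KK$ gives $\bar g(x,v_1)<\bar g(x,v_2)$. For the uniform limits, $g(x,z,v)\ge G\bigl(\tfrac{v-\|S\|_\infty}{\|h\|_\infty}\bigr)$ for $v>0$, and this tends to $+\infty$ as $v\to+\infty$ uniformly in $(x,z)$ (symmetrically as $v\to-\infty$), whence $\lim_{v\to\pm\infty}\bar g(x,v)=\pm\infty$ uniformly in $x$ because $\mm(\KK)=1$.

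Continuity of $\bar g(x,\cdot)$ is the step where assumption \eqref{em0} is essential, and I expect it to be the main obstacle: since $G$ is discontinuous on the countable set $E$, the pointwise integrand $v\mapsto g(x,z,v)$ need not be continuous. Fix $x$ and $v_k\to v_*$. The set of $z\in\KK$ at which $G$ is discontinuous at $\tfrac{v_*-S(x,z)}{h(x,z)}$ is $\bigcup_{\a\in E}\{z:\psi_\a(x,z)=v_*\}$, since $\tfrac{v_*-S}{h}=\a\iff \a h+S=v_*$; this set is $\mm$-null by \eqref{em0} and the countability of $E$. At every other $z$, monotonicity of $G$ together with $\tfrac{v_k-S}{h}\to\tfrac{v_*-S}{h}$ forces $g(x,z,v_k)\to g(x,z,v_*)$, and as the integrands are uniformly bounded, dominated convergence gives $\bar g(x,v_k)\to\bar g(x,v_*)$. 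Thus $\bar g(x,\cdot):\R\to\R$ is a continuous strictly increasing surjection, so $\bar f(x,\cdot)=\bar g(x,\cdot)^{-1}$ is continuous, strictly increasing, onto $\R$, with $\lim_{u\to\pm\infty}\bar f(x,u)=\pm\infty$ uniformly in $x$ (the uniformity transferring from that of $\bar g$ exactly as in Lemma~\ref{L:6.1}).

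Finally I would verify the remaining items of Definition~\ref{D1'}, mirroring the proof of Lemma~\ref{L:6.1}. Joint continuity of $\bar f$ on $\bar\Om\X\R$ and boundedness of $\bar f(\cdot,u)$ follow from the continuity of $\bar g$ together with the uniform coercivity just established, by the same compactness/subsequence argument used there. For the uniform local Lipschitz bound the one new point relative to Lemma~\ref{L:6.1} is that $F$ is merely nondecreasing; but $F$ is locally Lipschitz, so on bounded intervals $|r_1-r_2|=|F(G(r_1))-F(G(r_2))|\le L\,|G(r_1)-G(r_2)|$, i.e.\ $G$ has a lower Lipschitz bound, and then (for $v_1<v_2$ in a bounded interval)
\[
|\bar g(x,v_1)-\bar g(x,v_2)|=\int_\KK\bigl|G(\tfrac{v_1-S}{h})-G(\tfrac{v_2-S}{h})\bigr|\,d\mm\ge\frac{|v_1-v_2|}{L\,\|h\|_\infty},
\]
which inverts to the desired uniform Lipschitz estimate for $\bar f(x,\cdot)$ on bounded $u$-intervals. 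Condition {\bf(f1.2)} reduces, via $S(x,\cdot)\equiv0$ on $\po\Om$, to the statement that $\bar g(x,0)$ equals the constant $G(0)$ there, so it holds with the standing normalization $G(0)=0$ (coming from $F(0)=0$), the homogenized boundary condition in \eqref{eI.04} being read accordingly, cf.\ Remark~\ref{R:F2}.
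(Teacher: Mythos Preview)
Your proof is correct and follows essentially the same approach as the paper's: strict monotonicity from that of $G$, continuity of $\bar g(x,\cdot)$ via dominated convergence using \eqref{em0}, and the remaining type~1 properties of $\bar f$ by the argument of Lemma~\ref{L:6.1}; you simply supply details the paper leaves implicit. One small caveat: $F(0)=0$ does not by itself force $G(0)=0$ (only that $0$ lies in the flat interval $F^{-1}(0)$), so {\bf(f1.2)} for $\bar f$ is really obtained via the mechanism of Remark~\ref{R:F2} that you already invoke, rather than from a normalization of $G$.
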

 
 \begin{proof}  The fact that $\bar g(x,\cdot)$ is strictly increasing follows directly from the definition, since $g(x,z,\cdot)$ is strictly increasing, while the fact that it is continuous follows from an easy application of the dominated convergence theorem, as a consequence of  \eqref{em0}. The proofs of the facts that $\bar f(x,\cdot)$ is locally Lipschitz continuous, uniformly with respect to $x\in\bar\Om$, and that $\lim_{u\to\pm\infty}\bar f(x,u)=\pm\infty$, uniformly with respect to $x\in\bar\Om$, are similar to the proofs of the corresponding assertions for the case where $f_\ve(x,u)$ is of type~1, for all $\ve>0$.
 \end{proof}

Concerning  the initial data, in the case where $f_\ve$ is of type~1, for all $\ve>0$, we assume
\begin{enumerate}
\item[{\bf(h1.2)}]  $u_0(z,x)=g(z, \phi_0(x))$ with $\phi_0\in L^\infty(\Om)$,
\end{enumerate}
while, in the case where $f_\ve$ is of type~2, for all $\ve>0$, we assume
\begin{enumerate}
\item[{\bf(h2.2)}] $u_0(z,x)=G\left((\phi_0(x)-S(x,z))/h(x,z)\right)$, with $\phi_0\in L^\infty(\Om)$. 
\end{enumerate}

The particular form of the initial data prescribed in {\bf(h1.2)} and {\bf(h2.2)} is sometimes summarized by saying that the initial data are {\em well-prepared}.

We define
\begin{equation}\label{bar1}
\bar u_0(x)=\Medint u_0(x,z)\,dz.
\end{equation}
Observe that, by the hypotheses {\bf(h1.2)}, when $f$ is of type~1, or {\bf(h2.2)}, when $f$ is of type~2,  \eqref{bar1} is equivalent to $\bar u_0(x)=\bar g(\phi_0(x))$. 

For each $\a\in\re$, we define 
\begin{equation}\label{estead}
\Phi_{\alpha}(x,z):=  g(x,z,\alpha), \quad (x,z)\in\bar\Om\X\R^n. 
\end{equation}

In the proof of Theorem~\ref{T:6.1} below,  we will use the fact that  $\Phi_{\a}(x,\frac{x}{\ve})$ trivially  satisfies \eqref{e02}, with $f$ replaced by $f_\ve$, if $f_\ve$ is of type~1, 
for all $\ve>0$,  or \eqref{eD2''}, with $f$ replaced by $f_\ve$, if $f_\ve$ is of type~2, for all $\ve>0$,  and, obviously,  $\a= f_\ve(x,\Phi_\a(x,\frac{x}{\ve}))\in L_\loc^2((0,\infty);H_\loc^1(\Om))$. Therefore, it satisfies the assumptions in Remark~\ref{obs1}.  In particular, given any entropy solution of \eqref{p1}, $u_\ve(x,t)$, we may apply Corollary~\ref{cor1},
for $u_1(x,t)=u_\ve(x,t)$ and $u_2(x,t)=\Phi_\a(x,\frac{x}{\ve})$, as long as \eqref{ecor1} is verified.

\begin{theorem}\label{T:6.1}
Let $u_\ve(x,t)$ be the entropy solution of~\eqref{p1}.  For $f_\ve$ of type~1,  assume that  {\bf(h1.1)}, {\bf(h1.2)} hold;  for $f_\ve$  of type~2, assume that  {\bf(h2.1)} and {\bf(h2.2)} hold.  
Then $u_\ve$  weak star converge in $L^\infty(Q)$ to $\bar u(x,t)$, where the latter is the entropy solution to the problem
\begin{equation}\label{p4}
\begin{cases}
{\partial}_t\bar{u}=\Delta \bar{f}(x,\bar{u}),
& (x,t)\in Q=\Om\X(0,\infty), \\ 
\bar{u}(x,0)=\bar u_0(x), 
& x\in \Om,\\
\bar u(x,t) =0, &(x,t)\in\po\Om\X(0,\infty).
\end{cases}
\end{equation} 
Moreover,  we have
\begin{equation}\label{p6}
u_\ve(x,t)- g\bigl(x,\frac{x}{\ve},\bar f(x, \bar u(x,t))\bigr)\to 0,
 \qquad\text{as $\ve\to0$ in $L_\loc^1(Q)$}.
\end{equation}
\end{theorem}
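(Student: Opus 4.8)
The plan is to follow the DiPerna--E--Serre strategy adapted to this degenerate parabolic setting, using two-scale Young measures as the main technical device. First I would establish uniform bounds: since the initial data $u_0(z,x)$ takes values in a fixed bounded interval and the stationary solutions $\Phi_\a(x,\frac{x}{\ve}) = g(x,\frac{x}{\ve},\a)$ sandwich $u_0$ for suitable $\a = \pm M$, Corollary~\ref{cor1} (comparison with stationary solutions) gives $\|u_\ve\|_{L^\infty(Q)} \le M_0$ uniformly in $\ve$, provided the boundary condition \eqref{ecor1} holds --- which it does because both $f_\ve(x,u_\ve)$ and $\a$ vanish or are constant in a way consistent with the null boundary data. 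With uniform boundedness in hand, I would apply Theorem~\ref{T3} to extract a subnet $\{u_{\ve(d)}\}$ generating a two-scale Young measure $\nu_{z,x,t}$ on $\R$ (here the base space is $\Om\X(0,\infty)$, or rather a bounded space-time cylinder, exhausted as needed since $\Om$ may be unbounded).

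The heart of the argument is to show $\nu_{z,x,t} = \delta_{g(x,z,\bar f(x,\bar u(x,t)))}$, i.e.\ that the Young measure is a Dirac mass concentrated exactly on the corrector. The key device is the entropy inequality \eqref{eTA3''} from Remark~\ref{obs1}, comparing $u_\ve$ with the stationary solutions $\Phi_\a(x,\frac{x}{\ve})$ for every $\a\in\R$: this yields, for all $0\le\varphi\in C_c^\infty(Q)$,
\begin{equation*}
\int_Q (u_\ve - \Phi_\a(x,\tfrac{x}{\ve}))_\pm \varphi_t - \nabla(f_\ve(x,u_\ve) - \a)_\pm\cdot\nabla\varphi\,dx\,dt \ge 0.
\end{equation*}
Passing to the two-scale limit along the subnet, using that the ``pressure'' $f_\ve(x,u_\ve)$ converges strongly (this needs the energy estimate, as in step~4 of the proof of Theorem~\ref{t01}, giving $\nabla f_\ve(x,u_\ve)$ bounded in $L^2_\loc$ and hence $f_\ve(x,u_\ve) \to \bar p(x,t)$ strongly in $L^2_\loc$ for some limit pressure $\bar p$), I would obtain a Kato-type inequality for the limiting Young measure in the $z$ variable. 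More precisely, testing against $\varphi$ independent of the fast variable and integrating the two-scale limit over $\KK$, one gets that $M_z\la\nu_{z,x,t}, (\cdot - g(x,z,\a))_\pm\ra$ is an entropy sub/supersolution of the homogenized equation. The crucial point is that the pressure constraint forces $f(x,z,\lambda) = \bar p(x,t)$ for $\nu_{z,x,t}$-a.e.\ $\lambda$, which (in the type~1 case by strict monotonicity, in the type~2 case using \eqref{em0} to handle the flat parts of $F$ via Lemma~\ref{T:3.2}) pins $\nu_{z,x,t}$ to the single value $g(x,z,\bar p(x,t))$.

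Once $\nu_{z,x,t} = \delta_{g(x,z,\bar p(x,t))}$, I would identify $\bar p(x,t) = \bar f(x,\bar u(x,t))$: integrating over $\KK$ gives $\bar u(x,t) = \int_\KK g(x,z,\bar p(x,t))\,d\mm(z) = \bar g(x,\bar p(x,t))$, hence $\bar p = \bar f(x,\bar u)$ by definition \eqref{efbar}. Then I must check $\bar u$ solves \eqref{p4}: the weak formulation passes to the limit since $\int_\KK g(x,z,\bar p)\,d\mm(z)$ and the limit of $\nabla f_\ve(x,u_\ve) = \nabla\bar p$ are handled by the two-scale convergence applied to the div-form term, using Lemma~\ref{L:ort} or the orthogonality between gradients and divergence-free fields to kill the oscillatory part; the initial condition follows from $\bar u_0(x) = \Medint u_0(x,z)\,dz = \bar g(x,\phi_0(x))$ via the well-preparedness of the data. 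Uniqueness of the entropy solution of \eqref{p4} (Theorem~\ref{T:A4}, applicable since $\bar f$ is of type~1 by Lemmas~\ref{L:6.1} and~\ref{L:6.2}) then shows the whole family $u_\ve$ converges, not just a subnet, and weak-$\star$ convergence to $\bar u$ follows because $\int_\KK\la\nu_{z,x,t},\cdot\ra\,d\mm = \bar u(x,t)$. Finally, the corrector statement \eqref{p6}: since $\nu_{z,x,t} = \delta_{g(x,z,\bar f(x,\bar u(x,t)))}$ and the corrector $U(z,x,t) = G\big((\bar f(x,\bar u(x,t)) - S(x,z))/h(x,z)\big)$ is of the form $G\circ\theta$ with $\theta\in L^\infty$ of the fast variable, Lemma~\ref{T:3.2} --- whose hypothesis \eqref{eT:3.2} is exactly \eqref{em0} --- upgrades Dirac concentration to strong $L^1_\loc$ convergence $u_\ve - g(x,\frac{x}{\ve},\bar f(x,\bar u(x,t)))\to 0$. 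In the type~1 case this is simpler, following directly from Lemma~\ref{L:E}.

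The main obstacle I expect is the rigorous passage to the two-scale limit in the \emph{gradient} terms $\nabla f_\ve(x,u_\ve)\cdot\nabla\varphi$ and, relatedly, proving strong convergence of the pressure $f_\ve(x,u_\ve)$: one needs a genuine compactness estimate (the energy inequality giving $\nabla f_\ve(x,u_\ve)$ bounded in $L^2_\loc$) together with a careful decomposition of the limit gradient into a macroscopic part $\nabla_x\bar p$ and a mean-zero oscillatory part that must be shown to vanish when tested against macroscopic $\varphi$ --- this is where ergodicity of $\AA$ enters essentially, via Lemma~\ref{L:ort}. The second delicate point, specific to type~2, is ensuring that the flat parts of $F$ do not create spurious mass in $\nu_{z,x,t}$: this is controlled precisely by the null-measure condition \eqref{em0} through Lemma~\ref{T:3.2}, which is why that hypothesis is built into {\bf(h2.1)}.
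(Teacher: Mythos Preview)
Your proposal has a genuine gap at its linchpin: you claim that the energy estimate (spatial bound on $\nabla f_\ve(x,u_\ve)$ in $L^2_\loc$) yields \emph{strong} convergence of the pressure $v_\ve:=f_\ve(x,u_\ve)$ in $L^2_\loc(Q)$. This inference is unjustified: the estimate from step~4 of Theorem~\ref{t01} controls only the spatial gradient, and you have no time-compactness for $v_\ve$ (the equation gives $\partial_t u_\ve\in L^2(H^{-1})$, not $\partial_t v_\ve$; Aubin--Lions does not apply to $v_\ve$). Without strong convergence of $v_\ve$ you cannot conclude that $\mu_{z,x,t}$ is a Dirac, and the rest of your argument collapses. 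In fact, the route you sketch --- get the pressure to converge first, then read off $\nu_{z,x,t}=\delta_{g(x,z,\bar p)}$ --- is essentially the strategy of Theorem~\ref{T:7.1}, where the missing compactness is supplied by the inverse-Laplacian/viscosity-solution machinery, at the price of a bounded domain and a regular algebra. That machinery is unavailable under the hypotheses of Theorem~\ref{T:6.1}.

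The paper's proof proceeds quite differently and never asserts strong convergence of $v_\ve$. The two ingredients you are missing are: (a) the \emph{cell-problem} test, i.e.\ inserting $\phi(x,t)=\ve^2\varphi(\tfrac{x}{\ve})\psi(x,t)$ with $\varphi,\Delta\varphi\in\AA$ into the entropy inequality~\eqref{p8}; the $\ve^2$ kills everything except the $\Delta_z$ contribution and yields $\Delta_z\langle\mu_{z,x,t},\theta\rangle=0$ in $\BB^2$, whence $\mu_{z,x,t}=\mu_{x,t}$ by Lemma~\ref{L:ort}; and (b) a genuine \emph{DiPerna doubling-of-variables} argument to reduce $\mu_{x,t}$ to a Dirac. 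Step~(b) uses the \emph{equality} \eqref{eTA3'} in Theorem~\ref{T:A3}(ii) --- with its explicit dissipation term --- twice: once for $u_\ve$ against the stationary profile $\Phi_{\xi(y,s)}(x,\tfrac{x}{\ve})$ with $\xi=\bar f(y,\bar u(y,s))$, and once for $\bar u$ against $\bar g(y,k)$; the two relations are added and a Kruzhkov doubling in $(x,t)$ vs.\ $(y,s)$ produces the key inequality \eqref{eEND}. Well-preparedness of $u_0$ enters only at the very end, to secure the initial-layer estimate \eqref{eEND4}, after which \eqref{eEND} forces $\mu_{x,t}=\delta_{\bar f(x,\bar u(x,t))}$. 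Your outline invokes neither the $\ve^2$-scaled test functions nor the dissipation identity \eqref{eTA3'}, and the ``Kato-type inequality'' you allude to is not what is actually derived.
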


\begin{proof}
Since the proof  is similar to that of theorem~7.1 in \cite{AFS} with slight modifications,  we will only outline the main steps of it, although we believe the sketch presented here will be enough for a complete understanding, in a self-contained way.     In any case,  for  details omitted here, we refer the reader to~\cite{AFS}. The case where $f_\ve$ is of type~2 will demand some specific considerations which are fully provided here.

1. First, we  observe that the weak solutions $u_{\ve}$, $\ve>0$, of \eqref{p1} are bounded uniformly with respect to $\ve$ in $L^{\infty}(Q)$. Indeed, we note that 
if $\a_1,\a_2$ are such that $\a_1\le \phi_0(x)\le \a_2$ for $x\in\R$, we have 
 \begin{equation}\label{eT61.1}
 \Phi_{\a_1}(x,\frac{x}{\ve})\le u_{0}(x,\frac{x}{\ve}) \le \Phi_{\a_2}(x,\frac{x}{\ve}) \qquad \text{for all $x\in\Om$}.
 \end{equation}
 So, choosing adequately $\a_1,\a_2\in \R$ in \eqref{eT61.1}, by the comments made just before the statement,  we may apply Corollary~\ref{cor1} to get
$$
\Phi_{\a_1}(x,\frac{x}{\ve}) \le u_{\ve}(x,t)\le \Phi_{\a_2}(x,\frac{x}{\ve})\qquad \text{for a.e.\ $(x,t)\in Q$}.
$$
Choosing $A_1,A_2\in \R$ such that $f(x,z,A_1)\le \a_1$ and $f(x,z,A_2)\ge \a_2$ for all $(x,z)\in\Om\X\R^n$,  since then $A_1\le \Phi_{\a_1}(x,\frac{x}{\ve})$ and 
$A_2\ge \Phi_{\a_2}(x,\frac{x}{\ve})$, for all $x\in\Om$, we obtain  a compact interval $K=[A_1,A_2]$ in which  $u_\ve(x,t)$ assumes its values for a.e.\ $(x,t)\in\Om\X(0,\infty)$.

Let ${\nu}_{z,x,t}\in \mathcal{M}(K)$, with $(z,x,t)\in\KK\X Q$, be the two-scale space-time Young measures associated with a subnet of $\{u_{\ve}{\}}_{\ve >0}$ with test functions oscillating only on the space variable. Following \cite{ES}, \cite{AF} and \cite{AFS}, the theorem is proved by adapting DiPerna's method in \cite{DP}, that is, by showing that ${\nu}_{z,x,t}$ is a Dirac measure for almost all $(z,x,t)\in {\mathcal K} \X Q$. Since we are going to show that ${\nu}_{z,x,t}$ does not 
depend on the chosen subnet (so that, a posteriori, a full limit as 
$\ve\to 0$ occurs), in order to simplify our notation we will use the 
notation $\lim_{\ve\to 0}$, with no reference to the subnet.

Observe that, for every $\a\in\R$, the entropy solutions $u_{\ve}$
and ${\Phi}_{\alpha}(\frac{x}{\ve}):=g(\frac{x}{\ve},\a)$ satisfy (see Theorem~\ref{T:A3})
\begin{multline}\label{p8}
\int_{Q}|u_{\ve}(x,t)-{\Phi}_{\alpha}(x,\frac{x}{\ve})|
\phi_t+ |f(x,\frac{x}{\ve},u_{\ve}(x,t))-f(x,\frac{x}{\ve},{\Phi}_{\alpha}
(x,\frac{x}{\ve}))|\Delta \phi \,dx\,dt\\
+ \int_{\Om} |u_0(x,\frac{x}{\ve})-{\Phi}_{\alpha}(x,\frac{x}{\ve})|\phi(x,0)\,dx\ge 0,
\end{multline}
for all $0\le \phi\in C_c^\infty(\Om\X\R)$. 
In \eqref{p8}, we take $\phi(x,t)={\ve}^2 \varphi(\frac{x}{\ve})\psi(x,t)$ with 
$0\le \psi \in C_c^{\infty}({\re}_{+}^{n+1})$, 
$\varphi,\, \Delta \varphi \in \mathcal{A}(\re^n)$ and $\varphi \ge 0$. Observe that 
$$
\Delta \phi = \Delta \varphi(\frac{x}{\ve})\psi(x,t)
+2\ve \nabla \varphi(\frac{x}{\ve})\cdot \nabla \psi(x,t)
+{\ve}^2\varphi(\frac{x}{\ve})\Delta \psi(x,t).
$$
Letting $\ve \to 0$ and using Theorem~\ref{T3}, we get
$$
\int_{Q}\int_{{\mathcal K}}\psi(x,t)\langle {\nu}_{z,x,t},
|f(x,z,\cdot)-f(x,z,{\Phi}_{\alpha}(x,z))|\rangle \underline{\Delta \varphi}(z) 
\,d{{\mathfrak m}}(z)\,dx\,dt \ge 0.
$$
Now apply the inequality above to $\| \varphi {\|}_{\infty}\pm \varphi$ to
obtain
\begin{equation}\label{p10}
\int_{Q}\int_{{\mathcal K}}\psi(x,t)\langle {\nu}_{z,x,t},
|f(x,z,\cdot)-\alpha|\rangle \underline{\Delta \varphi}(z)\,
d{\mathfrak m}(z)\,dx\,dt=0 ,
\end{equation}
for all $\varphi$ such that $\varphi, \Delta \varphi \in \mathcal{A}(\re^n)$ 
and all $0\le \psi\in C^{\infty}_c(Q)$.  Moreover, equality \eqref{p10} also holds if we replace $|f(x,z,\cdot)-\a|$ by $f(x,z,\cdot)-\a$, which is achieved in the same way by using the integral equality in the definition of weak solution instead of the entropy inequality. 
Therefore, we obtain 
\begin{equation}\label{p10'}
\int_{Q}\int_{{\mathcal K}}\psi(x,t)\langle {\nu}_{z,x,t},
\theta(f(x,z,\cdot))\rangle \underline{\Delta \varphi}(z)\,
d{\mathfrak m}(z)\,dx\,dt=0,
\end{equation}
for any affine function $\theta$, and, by approximation, we get that \eqref{p10'} holds for any $\theta\in C(K')$, where $K' $ is a compact interval such that $f(x,z,K)\subset K'$, for all 
$(x,z)\in \bar\Om\X\R^n$.

2. Define a new family of parametrized measures ${\mu}_{z,x,t}$ given by 

\begin{equation}\label{p11}
\langle {\mu}_{z,x,t},\theta \rangle := \langle {\nu}_{z,x,t},
\theta (f(x,z,\cdot))\rangle, \qquad \theta \in C(K').
\end{equation}
By~\eqref{p10'}, we have 
\begin{equation}\label{ep10'}
\Delta_z\langle\mu_{z,x,t},\theta\rangle =0,\qquad
\text{in the sense of $\BB^2$.}
\end{equation}
Therefore, by the ergodicity of $\AA(\R^n)$, using Lemma~\ref{L:ort},   we have that \eqref{ep10'}  implies that 
$\mu_{z,x,t}$ does not depend on $z$, that is, for $\mm$-a.e.\ $z\in\KK$, $\la\mu_{z,x,t},\theta(\cdot)\ra=\la\mu_{x,t},\theta(\cdot)\ra:=\int_{\KK}\la\mu_{z,x,t},\theta(\cdot)\ra\,d\mm(z)$, for any $\theta\in C(K')$,
a.e.\ $(x,t)\in \X\Om\X(0,\infty)$.

3.  The central strategy of  the proof is then to show that $\mu_{x,t}=\d_{\xi(x,t)}$, with $\xi(x,t):=\bar f(x,\bar u(x,t))$, where  $\bar u(x,t)$ is the entropy solution of \eqref{p4}. In order to achieve this, a major step is to obtain the inequality
\begin{equation}\label{eEND}
\int_{Q}\langle {\mu}_{x,t},I\left(\cdot,\bar f(x,\bar u(x,t))\right)\rangle \varphi_t
 +\langle{\mu}_{x,t},G\left(\cdot,\bar f(x,\bar u (x,t))\right)\rangle\Delta \varphi \,dx\,dt \ge 0,
\end{equation}
for all $0\le \varphi \in C_c^{\infty}(Q)$, where
\begin{align}
&I(x,\rho,\alpha):= \int_{{\mathcal K}}|g(x,z,\rho)-g(x,z,\alpha)|\,d{\mathfrak m}(z),\label{p15}\\
& G(\rho,\alpha):= |\rho-\alpha|.\label{p16}
\end{align}
The inequality  \eqref{eEND} is obtained as follows. We first use \eqref{eTA3'}, in item (ii) of Theorem~\ref{T:A3},  making $u_1(x,t)=u_{\ve}(x,t)$ and $u_2(x)=\Phi_\a(x,\frac{x}{\ve})$.
Then, we set $\a=\bar f(y,\bar u(y,s))$, integrate in $(y,s)\in Q$, and make $\ve\to0$ to obtain, after some manipulations, 
\begin{align}\label{e9*}
&\int_{Q^2}\langle {\mu}_{x,t},I(x,\cdot,\xi(y,s))\rangle \phi_t
+\langle{\mu}_{x,t},G(\cdot,\xi(y,s))\rangle
\big({\Delta}_x\phi+{\div}_y{\nabla}_x\phi \big)\,dx\,dt\,dy\,ds\\
&=\lim_{\ve \to 0}\lim_{\delta \to 0}\int_{Q^2}
\bigg\{|{\nabla}_x[f(x,\frac{x}{\ve},u_{\ve})-f(x,\frac{x}{\ve},{\Phi}_{\xi}
(x,\frac{x}{\ve}))]|^2\nonumber\\
&\qquad+{\nabla}_y
[f(x,\frac{x}{\ve},u_{\ve})-f(x,\frac{x}{\ve},{\Phi}_{\xi}(x,\frac{x}{\ve}))]
\cdot{\nabla}_x[f(x,\frac{x}{\ve},u_{\ve})-f(x,\frac{x}{\ve},{\Phi}_{\xi}(x,\frac{x}
{\ve}))]\bigg\}\nonumber\\
&\qquad\qquad \times 
H_{\delta}'(f(x,\frac{x}{\ve},u_{\ve})-f(x,\frac{x}{\ve},{\Phi}_{\xi}
(x,\frac{x}{\ve})))\phi\,dx\,dt\,dy\,ds,\nonumber
\end{align}
where $u_{\ve}=u_{\ve}(x,t)$, $\xi=\xi(y,s)$.  

Next we  use again \eqref{eTA3'}, in item (ii) of Theorem~\ref{T:A3}, in variables $(y,s)\in Q$,  making $u_1(y,s)=\bar u(y,s)$ and $u_2(y)=\bar g(y,k)$, where $k\in\R$, and $\bar g$ is defined in \eqref{egbar}, to obtain
 \begin{align}\label{e6*}
&\int_{Q}|\bar g(y,k)-\bar{u}(y,s)|\phi_s +\sgn(k-\bar{f}(y,\bar{u}(y,s)))
{\nabla}_y\bar{f}(y,\bar{u})\cdot {\nabla}_y\phi \,dy\,ds \\
&\qquad\qquad=\lim_{\delta \to 0}\int_{Q}|{\nabla}_y\bar{f}(y,\bar{u})
|^2H_{\delta}'(k-\bar{f}(y,\bar{u}(y,s)))\phi\, dy\,ds,\qquad \text{for all $k\in\R$}.\nonumber
\end{align}
Precisely at this point we will need the additional restriction in \eqref{em0}, in the case where $f_\ve$ is of type 2.  Namely, we need the validity of the formula
 \begin{equation}\label{eval}
 \bar{u}(y,s)=\int_{{\mathcal K}}g(y,z,\xi(y,s))\,d{\mathfrak m}(z),
 \end{equation}
 which is guaranteed by \eqref{em0}, as proved in Lemma~\ref{L:6.2}. 
 Thus, using the definition of $I$ and $G$, the fact that, since ${\nabla}_y\xi(y,s)={\nabla}_y[f(x,\frac{x}{\ve},{\Phi}_{\xi(y,s)}
(x,\frac{x}{\ve}))]$, we have
\begin{align*}
&\int_{Q}|{\nabla}_y\bar{f}(y,\bar{u})|^2H_{\delta}'(k-
\bar{f}(y,\bar{u}(y,s)))\phi \,dy\,ds =\int_{Q}|{\nabla}_y\xi(y,s)|^2H_{\delta}'(k-\xi(y,s))\phi \,dy\,ds\\
&=\int_{Q}|{\nabla}_yf(x,\frac{x}{\ve},{\Phi}_{\xi(y,s)}(x,\frac{x}{\ve}))|^2
H_{\delta}'(k-\xi(y,s))\phi \,dy\,ds,
\end{align*}
we arrive at 
\begin{equation*}
\int_{Q}I(y,k,\xi(y,s))\phi_s +G(k,\xi(y,s)) {\Delta}_y\phi \,dy\,ds =\lim_{\delta \to 0}\int_{Q}|{\nabla}_yf(x,\frac{x}{\ve},{\Phi}_{\xi(y,s)}
(x,\frac{x}{\ve}))|^2
H_{\delta}'(k-\xi(y,s))\phi \,dy\,ds.
\end{equation*}
for all $k\in \re$ and all $0\le \phi \in C_c^{\infty}(Q^2)$.

We then  take $k=f(x,\frac{x}{\ve},u_{\ve}(x,t))$, integrate in $(x,t)$,   make $\ve\to0$, using Theorem~\ref{T3}, use the definition of $\mu$, and after some manipulations we obtain  
\begin{align}\label{e10*}
&\int_{Q^2}\langle {\mu}_{x,t},I(y,\cdot,\xi(y,s))\rangle \phi_s
+\langle{\mu}_{x,t},G(\cdot,\xi(y,s))\rangle
\big({\Delta}_y\phi+{\div}_x{\nabla}_y\phi\big)\,dx\,dt\,dy\,ds\\
&=\lim_{\ve \to 0}\lim_{\delta \to 0}\int_{Q^2}\bigg\{
|{\nabla}_y[f(x,\frac{x}{\ve},u_{\ve})-f(x,\frac{x}{\ve},{\Phi}_{\xi}
(x,\frac{x}{\ve}))]|^2\nonumber\\
&+ {\nabla}_y[f(x,\frac{x}{\ve},u_{\ve})-f(x,\frac{x}{\ve},{\Phi}_{\xi}
(x,\frac{x}{\ve}))]\cdot{\nabla}_x[f(x,\frac{x}{\ve},u_{\ve})
-f(x,\frac{x}{\ve},{\Phi}_{\xi}(\frac{x}
{\ve}))]\bigg\}\nonumber\\
&\qquad\qquad \times
H_{\delta}'(f(x,\frac{x}{\ve},u_{\ve})-f(x,\frac{x}{\ve},{\Phi}_{\xi}
(x,\frac{x}{\ve}))\phi\,dx\,dt\,dy\,ds.\nonumber
\end{align}

Finally we add  \eqref{e9*} with \eqref{e10*}, use suitable test functions as in  Kruzkov's doubling variables  method  (cf.\ \cite{Kr}), as in the proof of Theorem~\ref{T:A3}, to  conclude the proof of  \eqref{eEND}.

4. From \eqref{eEND} it follows  that $\mu_{x,t}=\d_{\xi(x,t)}$, with $\xi(x,t)=\bar f(x,\bar u(x,t))$, as asserted.  This is achieved in a standard way, where an essential  point is to show that 
\begin{equation}\label{eEND4}
\lim_{\tau\to0}\frac1{\tau}\int_0^\tau \int_{B_R}\la\mu_{x,t},I(x,\cdot,\bar f(x,\bar u_0(x)))\ra\,dx\,dt=0,\qquad \text{for all $R>0$},
\end{equation}
where $B_R$ is the open ball centered at the origin with radius $R$.
It is in the proof of \eqref{eEND4} that we need to use the fact that $u_0(z,x)$ has the for $u_0(z,x)=g(x,z,\phi_0(x))$ in hypotheses {\bf(h1.2)}, if $f_\ve$ is of type 1, or  {\bf(h2.2)}, if 
$f_\ve$ is of type 2.  Indeed, \eqref{eEND4} follows from the relation 
\begin{align}\label{p14}
&\int_{Q}\int_{{\mathcal K}}\langle {\nu}_{z,x,t},|\cdot-
{\Phi}_{\alpha}(x,z)|\rangle \varphi_t+\langle {\nu}_{z,x,t},|f(x,z,\cdot)-
f(x,z,{\Phi}_{\alpha}(x,z))|\rangle \Delta\varphi(x,z) \,d{\mathfrak m}(z)\,dx\,dt \\
&\qquad\qquad\qquad+\int_{\Om}\int_{{\mathcal K}}|u_{0}(x,z)-
{\Phi}_{\alpha}(x,z)|\varphi(x,0)\, d{\mathfrak m}(z)\,dx \ge 0\nonumber
\end{align}
for all $\alpha \in \re$ and for all $0\le \varphi \in  C_c^{\infty}(Q)$, obtained from \eqref{p8} by sending $\ve\to0$ and using Theorem~\ref{T3}.  From \eqref{p14}, using the definition of $\mu_{x,t}$, we obtain 
 \begin{align}\label{p19}
&\int_{Q}\langle {\mu}_{x,t},I(x,\cdot,\alpha)\rangle
\varphi_t + \langle {\mu}_{x,t},G(\cdot,\alpha)\rangle 
\Delta \varphi \,dx\,dt\\
&\qquad\qquad\qquad+\int_{\Om}\int_{{\mathcal K}}|u_{0}(x,z)-
{\Phi}_{\alpha}(x,z)|\varphi(x,0)\,d{\mathfrak m}(z)\,dx\ge 0,\nonumber
\end{align}
for all $0\le \varphi \in C_{c}^{\infty}(Q)$ and all 
$\alpha \in \re $. Now, from \eqref{p19} in a standard way, we obtain 
\begin{equation}\label{p20}
\overline{\lim_{h\to 0}}\frac{1}{h}\int_{0}^h\int_\Om\langle 
{\mu}_{x,t},I(x,\cdot,\alpha)\rangle \phi(x) \,dx\,dt\le 
\int_{\Om}\int_{{\mathcal K}}|u_{0}(z,x)-{\Phi}_{\alpha}(x,z)|
\phi(x) \,d{\mathfrak m}(z)\,dx,
\end{equation}
for all $0\le \phi\in C_0^\infty(\Om)$. 
Using the flexibility provided by the presence of the test function $\phi$ in \eqref{p20}, we get to replace $\a$ by $\phi_0(x)$ in \eqref{p20},  then getting \eqref{eEND4}. 

5. Therefore, using the definition of $\mu_{x,t}$, 
we deduce that $\nu_{z,x,t}=\d_{g(x,z,\bar f(x,\bar u(x,t)))}$, and so by Theorem~\ref{T3}
$$
\lim_{\ve\to0} \int_{Q} u_\ve(x,t)\phi(x,t)\,dx\,dt=\int_{Q}\int_{\KK}g(x,z,\bar f(x,\bar u(x,t)))\phi(x,t)\,d\mm(z)\,dx\,dt=\int_{Q}\bar u(x,t)\phi(x,t)\,dx\,dt.
$$
Finally, in the case where $f_\ve$ is of type~1, for all $\ve>0$, using Lemma~\ref{L:E},  or, in the case where $f_\ve$ is of type~2, for all $\ve>0$, using Lemma~\ref{T:3.2},  we obtain \eqref{p6}, which finishes the proof.

\end{proof}

\section{Homogenization of Porous medium type equations: \\  Bounded domains, regular algebras w.m.v.\  and general initial data}\label{S:6}

In this section we address the same homogenization problem as in the last section, but here we drop the restriction that  the initial data should be well-prepared, allowing a general initial data.  However, we have to compromise and restrict ourselves to bounded domains. Besides, the method used in the homogenization analysis here, which completely differs from the technique used in the last section, only allows us to consider ergodic algebras which are regular algebras w.m.v. We will also need a further mild restriction on the pressure function $f$ in order to obtain  the corresponding corrector property. Namely, we will need to assume the strict convexity of $\bar G(x,\cdot)$, for all $x\in\Om$, where $\bar G$ is defined in \eqref{eGbar} below.

So, in this section we only assume the following on the initial data:
\begin{equation}\label{e7.1}
u_0(x,z)\in L^\infty(\Om;\AA(\R^n)).
\end{equation}

 We will use the concept and some basic facts about viscosity solutions of fully nonlinear parabolic equations. 
 We refer to \cite{CIL} for a general exposition of the theory of viscosity solutions of fully-nonlinear elliptic and parabolic equations. 
 
 Before stating the theorem, let us introduce the following notations. Given a function $h\in L^\infty (\Om)$, we denote by $\D^{-1} h$ the solution of the boundary value problem
 \begin{equation}\label{eDinv}
 \begin{cases}\D v(x)=h(x), & x\in \Om,\\
                v(x)=0, &x\in\po \Om.
                \end{cases}
                \end{equation}

\begin{theorem}\label{T:7.1} Consider the problem \eqref{p1}. Assume that, for each fixed $\ve>0$, $f_\ve(x,u)$ is either of type~1 and satisfies {\bf(h1.1)}  or it is of type~2 and satisfies {\bf(h2.1)},  where $\AA(\R^n)$ is a regular algebra w.m.v., while $u_0(x,z)$ satisfies \eqref{e7.1}.
Let $u_\ve(x,t)$ be the entropy solution of \eqref{p1}. 
Then, as $\ve\to0$, $u_\ve$ weak star converges in $L^\infty(\Om\X[0,\infty))$ to the entropy solution, $\bar u(x,t)$, of the problem~\eqref{p4}.
Moreover,  we have  
\begin{equation}\label{crt}
u_\ve(x,t)-  g\bigl(x,\frac{x}{\ve},\bar f(x,\bar u(x,t))\bigr) \to 0 \qquad\text{as $\ve\to0$ in $L_\loc^1(\Om\X[0,\infty))$}.
\end{equation}
\end{theorem}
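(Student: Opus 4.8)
The plan is to convert the homogenization of the quasilinear problem~\eqref{p1} into the homogenization of a fully nonlinear parabolic equation, following ideas of Evans and Ishii, and then to recover the corrector~\eqref{crt} by means of two-scale Young measures and a compensated-compactness argument of Visintin. Since $\Om$ is bounded, write $v_\ve:=f(x,\frac x\ve,u_\ve)$; by the definition of entropy solution $v_\ve(\cdot,t)\in H^1_0(\Om)$ for a.e.\ $t$ and $u_\ve=g(x,\frac x\ve,v_\ve)$. Applying $\D^{-1}$ (the solution operator of~\eqref{eDinv}, in $x$, for each $t$) to the equation in~\eqref{p1}, the function $W_\ve:=\D^{-1}u_\ve$ satisfies $\po_tW_\ve=\D^{-1}\D v_\ve=v_\ve=f(x,\frac x\ve,\D W_\ve)$, that is,
\begin{equation*}
\po_tW_\ve=f\bigl(x,\tfrac x\ve,\D W_\ve\bigr)\ \text{in }Q,\qquad W_\ve|_{\po\Om}=0,\qquad W_\ve(\cdot,0)=\D^{-1}\bigl(u_0(\cdot,\tfrac\cdot\ve)\bigr),
\end{equation*}
a degenerate fully nonlinear parabolic equation, since $f(x,z,\cdot)$ is nondecreasing. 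First I would check that $W_\ve$ is a \emph{viscosity} solution of this equation: this follows by approximation from the classical (uniformly parabolic) solutions $u_\s$ of the regularized problems of Section~\ref{S:4} --- $\D^{-1}u_\s$ being a classical, hence viscosity, solution of the regularized fully nonlinear equation --- passing $\s\to0$ by $u_\s\to u_\ve$ in $L^1_\loc$ (Theorem~\ref{t01}), the continuity of $\D^{-1}$, and the stability of viscosity solutions under local uniform limits. The uniform $L^\infty$ bound on $u_\ve$ (obtained by comparison with stationary solutions exactly as in Step~1 of the proof of Theorem~\ref{T:6.1}) together with elliptic regularity gives that $\{W_\ve\}$ is bounded in $C^{1,\a}(\bar\Om)$ uniformly in $(\ve,t)$, while the equicontinuity estimates~\eqref{e05}--\eqref{e06} of Theorem~\ref{l04} furnish a uniform modulus of continuity in $t$; hence $\{W_\ve\}$ is precompact in $C_\loc(\bar Q)$.

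Next I would homogenize this fully nonlinear equation by the perturbed test function method with half-relaxed limits, which is precisely where the regularity of the algebra enters. Denote by $\bar W^*$ and $\bar W_*$ the upper and lower half-relaxed limits of $\{W_\ve\}$. Given $x_0\in\Om$ and $p\in\R$, set $c:=\bar f(x_0,p)$, so $\bar g(x_0,c)=p$ and hence $g(x_0,\cdot,c)-p\in V(\AA(\R^n))$; by Lemma~\ref{L:1.3} ($\AA(\R^n)$ being a regular algebra w.m.v.\ is essential here) there is, for every $\eta>0$, a corrector $\chi_\eta\in Z(\AA(\R^n))\subset C^\infty(\R^n)$, bounded together with all its derivatives, with $g(x_0,y,c)-p-\eta\le\D_y\chi_\eta(y)\le g(x_0,y,c)-p+\eta$. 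Testing $\bar W^*$ (resp.\ $\bar W_*$) from above (resp.\ below) by $\varphi+\ve^2\chi_\eta(\frac x\ve)$ at a point where $\D\varphi=p$, invoking the viscosity subsolution (resp.\ supersolution) property of $W_\ve$, the continuity of $f$ in the slow variable, and the local Lipschitz bound on $f(x,z,\cdot)$, and letting $\ve\to0$ and then $\eta\to0$, yields that $\bar W^*$ (resp.\ $\bar W_*$) is a viscosity sub- (resp.\ super-) solution of $\po_tW=\bar f(x,\D W)$, with boundary value $0$ and initial value $\D^{-1}\bar u_0$ (for the latter one uses $u_0(\cdot,\frac\cdot\ve)\to\bar u_0$ weak-$\ast$ in $L^\infty$, valid since $u_0\in L^\infty(\Om;\AA(\R^n))$, and the continuity of $\D^{-1}$). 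Since $\bar f(x,\cdot)$ is continuous and strictly increasing (Lemmas~\ref{L:6.1} and~\ref{L:6.2}), the comparison principle for this degenerate parabolic equation gives $\bar W^*\le\bar W_*$, hence $\bar W^*=\bar W_*=:\bar W$ and $W_\ve\to\bar W$ locally uniformly, $\bar W$ being the unique viscosity solution. Setting $\bar u:=\D\bar W$ (an $L^\infty(Q)$ function, being the distributional limit of the uniformly bounded $u_\ve=\D W_\ve$), elliptic regularity makes $\bar W$ a strong solution, and applying $\D$ back shows that $\bar u$ is the weak --- hence, $\bar f$ being of type~1, entropy --- solution of~\eqref{p4}; moreover $u_\ve=\D W_\ve\to\D\bar W=\bar u$ weak-$\ast$ in $L^\infty(Q)$.

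There remains the corrector~\eqref{crt}. Let $\nu_{z,x,t}$ be a two-scale Young measure generated by a subnet of $\{u_\ve\}$ (Theorem~\ref{T3}). The uniform bound on $v_\ve=f(x,\frac x\ve,u_\ve)$ in $L^2_\loc((0,\infty);H^1_\loc(\bar\Om))$, available uniformly in $\ve$ from Step~4 of the proof of Theorem~\ref{t01}, forces (by the standard argument) the two-scale limit of $v_\ve$ to be independent of $z$; since that $z$-independent limit is the weak limit $\po_t\bar W=\bar f(x,\bar u)$, we get $\la\nu_{z,x,t},f(x,z,\cdot)\ra=\bar f(x,\bar u(x,t))$ for a.e.\ $(z,x,t)$. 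Next, writing $u_\ve v_\ve=\D W_\ve\,\po_tW_\ve$, integrating by parts twice and using that $\nabla W_\ve\to\nabla\bar W$ strongly in $L^2_\loc$ (Aubin--Lions, since $\D W_\ve=u_\ve\in L^\infty$ and $\po_t\nabla W_\ve=\nabla v_\ve$ is bounded in $L^2_\loc$), one obtains the energy identity $\int_\KK\la\nu_{z,x,t},(\cdot)f(x,z,\cdot)\ra\,d\mm=\bar u(x,t)\,\bar f(x,\bar u(x,t))$ a.e.\ --- this is the Visintin/div--curl step. Combining the two with the defining relations $\bar g(x,\bar f(x,\cdot))=\mathrm{id}$ and $\bar g(x,v)=\int_\KK g(x,z,v)\,d\mm$, a short computation gives $\la\nu_{z,x,t},(\cdot-U)(f(x,z,\cdot)-f(x,z,U))\ra=0$ a.e., where $U(z,x,t):=g(x,z,\bar f(x,\bar u(x,t)))$; as the integrand is nonnegative, $\nu_{z,x,t}$ is supported on $\{r:f(x,z,r)=\bar f(x,\bar u(x,t))\}$. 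For $f$ of type~1 this set is the single point $U(z,x,t)$, whence $\nu_{z,x,t}=\d_{U(z,x,t)}$ and Lemma~\ref{L:E} gives~\eqref{crt}. For $f$ of type~2 it is the interval $[U(z,x,t),b(z,x,t)]$ with $U$ its left endpoint; since $\int_\KK\la\nu_{z,x,t},\mathrm{id}\ra\,d\mm=\bar u=\int_\KK U\,d\mm$ (using $\bar g(x,\bar f(x,\bar u))=\bar u$) while $\la\nu_{z,x,t},\mathrm{id}\ra\ge U$ pointwise, one concludes $\la\nu_{z,x,t},\mathrm{id}\ra=U(z,x,t)$, hence $\nu_{z,x,t}=\d_{U(z,x,t)}$; writing $U=G(\theta)$ with $\theta(z,x,t)=(\bar f(x,\bar u(x,t))-S(x,z))/h(x,z)$, assumption~\eqref{em0} --- this is also where the strict convexity of $\bar G$, cf.~\eqref{eGbar}, is used --- provides exactly the zero-measure hypothesis of Lemma~\ref{T:3.2}, yielding~\eqref{crt}.

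The step I expect to be the main obstacle is the passage to the fully nonlinear formulation and back at the level of viscosity solutions: verifying that the merely weak entropy solution $u_\ve$ yields a genuine viscosity solution $W_\ve$ possessing a uniform modulus of continuity up to $\po\Om$ and up to $t=0$, and checking the structure and comparison hypotheses for the limit equation $\po_tW=\bar f(x,\D W)$ with only continuous $x$-dependence. The Visintin energy step in the corrector part, though the conceptual crux there, is comparatively routine once the reformulation is in place.
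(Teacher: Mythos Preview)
Your proposal is correct and follows the paper's approach in all essentials: transform via $\Delta^{-1}$ to a fully nonlinear parabolic equation, verify the viscosity-solution property by passing to the limit from the regularized problems, homogenize by the perturbed test function method using the approximate corrector supplied by Lemma~\ref{L:1.3} (this is exactly where the regularity of the algebra enters), and identify the limit with the entropy solution of~\eqref{p4} by uniqueness of viscosity solutions.

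The one substantive difference is in the corrector step. The paper introduces the strictly convex primitive $G_*(x,z,v)=\int_0^v g(x,z,s)\,ds$, passes to the limit in the subdifferential inequality $G_*(x,\tfrac x\ve,v_\ve)-G_*(x,\tfrac x\ve,\bar v)\le u_\ve(v_\ve-\bar v)$ using Aubin's lemma ($u_\ve\to\bar u$ strongly in $L^2((0,T);H^{-1})$, $v_\ve-\bar v\rightharpoonup 0$ in $L^2((0,T);H^1_0)$), combines with the $z$-independence of $\mu_{z,x,t}$, and concludes $\mu=\delta_{\bar v}$ from the equality case in Jensen's inequality for the strictly convex $\bar G_*$. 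Your Minty-type route---establishing $\int_\KK\langle\nu_{z,x,t},(\cdot-U)(f(x,z,\cdot)-f(x,z,U))\rangle\,d\mm=0$ from the same Aubin compactness and then exploiting nonnegativity of the integrand---is an equivalent repackaging that avoids introducing $G_*$ and handles the type~2 case by the endpoint-plus-mean argument instead of strict convexity. Both routes rest on the same energy identity $\int u_\ve v_\ve\to\int\bar u\,\bar v$.

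Two minor points: the uniform time-continuity of $W_\ve$ follows immediately from $|\partial_t W_\ve|=|v_\ve|\le C$, not from~\eqref{e05}--\eqref{e06} (which concern $u_\sigma$); and for identifying $\bar u=\Delta\bar W$ as the entropy solution of~\eqref{p4}, the paper takes the cleaner route of letting $\tilde u$ be that entropy solution, showing $\Delta^{-1}\tilde u$ is the viscosity solution of~\eqref{e2.23}, and invoking uniqueness---this avoids arguing that the viscosity solution $\bar W$ is a strong solution.
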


\begin{proof}
1.  The fact that the solutions of \eqref{p1} form a uniformly bounded sequence in
$L^\infty(Q)$, was established in the proof of Theorem~\ref{T:6.1}.  

2. Now, let us make a general observation concerning problem \eqref{e01}-\eqref{e01''}, under assumptions {\bf(f1.1)}-{\bf(f1.2)}, or {\bf(f2.1)}-{\bf(f2.3)}, for $f$ of type~1 or type~2, respectively. So,  let $u$ be the entropy solution of~\eqref{e01}--\eqref{e01''} and, for each  $t\in[0,\infty)$, let $U(\cdot,t):=\D^{-1}u(\cdot,t)$. 
We claim that $U$ is the viscosity solution of
\begin{equation} \label{La2}
\begin{cases}
{\partial}_tU-f(x,\Delta U)=0, &(x,t)\in Q, \\ 
U(x,0)=U_{0}(x), &x\in \Om,
\\ 
U(x,t)=0,& (x,t)\in\po\Om\X(0,\infty),
\end{cases}
\end{equation}
where $U_{0}=\D^{-1}u_0$. 
Indeed, let $u_{\s}$ be the smooth solution of the corresponding regularized problem \eqref{e04}--\eqref{e04''}. For each $t\in[0,\infty)$, let $U_{\s}(\cdot,t):=\D^{-1}u_\s(\cdot,t)$. 
Since $u_\s$ and $U_{\s}$ are smooth, it is clear that the latter is the (viscosity) solution of 
\begin{equation} \label{La4}
\begin{cases}
{\partial}_t U -f^\s(x,\Delta U)=0,
&(x,t)\in Q,\\ 
U(x,0)=U_{0,\s}(x), &x\in\Om,
\\ 
U(x,t)=0, &(x,t)\in\po\Omega\X(0,\infty),
\end{cases}
\end{equation}
where $U_{0,\s}=\D^{-1}u_{0,\s}$.
Since $\{u_\s(x,t)\}_{0<\s<1}$ is uniformly bounded in $L^\infty(\Om\X[0,\infty))$,  we easily see that the $U_\s(x,t)$ form a uniformly bounded sequence in
$L^\infty([0,\infty);W^{2,p}(\Om))$ for all $p\in(1,\infty)$. On the other 
hand, from~\eqref{La4},  we easily deduce that 
$|U_{\s}(x,t)-U_{\s}(x,s)|\le C|t-s|$ for all $x\in\Omega$ for 
some constant $C>0$, independent of $\s$. Hence, we see that 
$U_{\s}$ is uniformly bounded in 
$W^{1,\infty}(\bar{Q})$. In particular, there is a 
subsequence $U_{\s_i}$ of $U_{\s}$ converging locally uniformly 
in $\bar{Q}$ to a function 
$U\in W^{1,\infty}(\bar{Q})$ which satisfies $U=\D^{-1}u$. 

It follows in a standard way that $U$ is the viscosity solution of \eqref{La2}. Indeed, given any $(x_0,t_0)\in Q$, we consider $\varphi\in C^2(Q)$ such that $U-\varphi$ has a strict local maximum at $(x_0,t_0)$. Since $U_{\s_i}-\varphi$ converges locally uniformly in 
$\bar{Q}$ to the function $U-\varphi$, we may obtain a sequence $(x_{i},t_i)\in Q$ such that $(x_i,t_i)$ is a  point of local maximum of 
$U_{\s_i}-\varphi$ and 
$(x_{i},t_i)\to (x_0,t_0)$ as $i\to\infty$.  Thus, we have
$$
\partial_t\varphi(x_i,t_i)-f^{\s_i}(x_i,\Delta \varphi(x_i,t_i))\le 0,
$$
from which it follows, as $i\to\infty$, 
\begin{equation}\label{evisc}
\partial_t\varphi(x_0,t_0)-f(x_0,\Delta \varphi(x_0,t_0))\le 0.
\end{equation}
To relax the assumption of a strict local maximum to just a local maximum we proceed as usual replacing $\varphi$ by, say, $\tilde\varphi(x,t):=\varphi(x,t)+\d(|x-x_0|^2+(t-t_0)^2)$ obtaining \eqref{evisc} with $\tilde\varphi$ instead of $\varphi$ and from that we obtain again \eqref{evisc} for $\varphi$ passing to the limit when $\d\to0$. In an entirely similar way we prove the reverse inequality when $U-\varphi$ has a local minimum at $(x_0,t_0)$, so proving that $U$ is a viscosity solution of \eqref{La2}.

3. In this and the next step we shall study the homogenization of~\eqref{e2.20} using a method 
motivated by~\cite{I}. As we will see,  the $\ve$-Laplacian property in Lemma~\ref{L:1.3} plays a decisive role at this point, and this explains our assumption that $\AA(\R^n)$ is a regular algebra w.m.v. 
We define $U_\ve(x,t)$ in $\Om\X[0,\infty)$ by $U_\ve:=\D^{-1}u_\ve$ 
where $u_\ve$ is the entropy solution of~\eqref{p1}. By step 2, we have that 
$U_\ve$ is the viscosity solution of
\begin{equation} \label{e2.20}
\begin{cases}{\partial}_tU_\ve-f(x,\frac{x}{\ve},\Delta U_\ve)=0,
&(x,t)\in Q,
\\ 
U_\ve(x,0)=U_{0,\ve}(x),
&x\in \Om,
\\ 
U_\ve(x,t)=0,& (x,t)\in\po\Om\X(0,\infty).
\end{cases}
\end{equation}
where $U_{0,\ve}=\D^{-1}u_{0,\ve}$, with $u_{0,\ve}(x)=u_0(x,\frac{x}{\ve})$. 
 The same argument used in the previous step shows that 
$$
 U_\ve\in L^\infty((0,\infty);W^{2,p}(\Om))\bigcap\Lip((0,\infty);L^\infty(\Om)),
$$
and so there is a subsequence $U_{\ve_i}$ of $U_\ve$ converging locally uniformly in $\bar Q$ to a function 
$$
\bar U\in L^\infty((0,\infty);W^{2,p}(\Om))\bigcap\Lip((0,\infty);L^\infty(\Om)),
$$
in particular, $\bar U\in W^{1,\infty}(\bar Q)$.

4. We claim that $\bar U(x,t)$ is the viscosity solution of the initial-boundary value problem
\begin{equation} \label{e2.23}
\begin{cases}
{\partial}_tU-\bar{f}(x,\Delta U)=0,
&(x,t)\in Q,
\\ 
U(x,0)=\bar{U}_{0}(x),
&x\in \Om,
\\ 
U(x,t)=0,& (x,t)\in\po\Om\X(0,\infty).
\end{cases}
\end{equation}
where 
$$
\bar U_0:=\D^{-1} \Medint u_0(z,x)\,dz.
$$ 

 Indeed, let $(\hat x,\hat t)\in Q$ and let $\varphi\in C^2(Q)$ be such  $\bar U - \varphi$ has a local maximum at $(\hat x,\hat t)$. Also,  let $v_{\s,\d}\in\AA(\R^n)$ be a smooth function satisfying
\begin{equation}
  g_\s\left(\hat x, z,\bar f_\s(\hat x,p)\right)-p -\d\le\gD_z v_{\s,\d}\le g_\s\left(\hat x,z,\bar f_\s(\hat x,p)\right)-p
+\d,  \label{e2.28}
\end{equation}
 with $p=\gD\varphi(\hat x,\hat t)$, whose existence is asserted by Lemma~\ref{L:1.3}, where $g_\s(x,y,\cdot)$ is the inverse of $f_\s(x,y,\cdot)=f(x,y,\cdot)+\s \cdot$, and $\bar f_\s$ is given by \eqref{egbar}, \eqref{efbar} with $g_\s$ replacing $g$. In particular, given any $\d'>0$ we can find $\d>0$ sufficiently small such that
\begin{equation*}
\bar f_\s(\hat x,\gD \varphi(\hat x,\hat t))-\d'\le f_\s\left(\hat x,z,\gD\varphi(\hat x,\hat t)+\gD v_{\s,\d}(z)\right)\le \bar f_\s(\hat x,\gD \varphi(\hat x,\hat t))+\d',
\end{equation*}
from which it follows
\begin{equation*}
\bar f(\hat x,\gD \varphi(\hat x,\hat t))-2\d'\le f\left(\hat x,z,\gD\varphi(\hat x,\hat t)+\gD v_{\d}(z)\right)\le \bar f(\hat x,\gD \varphi(\hat x,\hat t))+2\d',
\end{equation*}
for $\s>0$ sufficiently small, since $\bar f_\s$ converges pointwise to $\bar f$, where we set $v_\d:=v_{\s,\d}$. Here we use {\bf(h2.4)}, in case $f$ is of type~2, which implies that $\bar f$ is strictly increasing in 
$\bar \Om\X I$ and $\bar u(x,t)$ assumes values in $I$, so that we may assume that $\bar f(x,\cdot)$ is strictly increasing in $\R$, and so $\bar f_\s$ converges everywhere to 
$\bar f$.

Take $\rho>0$ be small enough, and let $(x_j,t_j)\in Q$, be a  point of maximum of
$$
U_j(x, t)-\varphi(x, t)-\ve_j^2 v_\d(\frac{x}{\ve_j})-\rho(|x-\hat x|^2+(\hat t-t)^2),
$$
 where we denote $U_j=U_{\ve_j}$, such that  $(x_j,t_j)\to(\hat x,\hat t)$, as $j\to\infty$. Such sequence $(x_j,t_j)$ exists  since $U_j$ converges locally uniformly to $\bar U$ and $v_\d$ is bounded. We have
$$
\varphi_{t}(x_j,t_j)-f\left(x_j,\frac{x_j}{\ve_j},\gD \varphi(x_j,t_j)+\gD v_\d(\frac{x_j}{\ve_j})+\rho\right)\le
2\rho|\hat t-t_j|,
$$
and
$$
f\left(\hat x,\frac{x_j}{\ve_j},\gD\varphi(\hat x,\hat t)+\gD v_\d(\frac{x_j}{\ve_j})\right)\le\bar f(\hat x,\gD \varphi(\hat x,\hat t))+2\d',
$$
which, after addition, gives
$$
\varphi_{t}(\hat x,\hat t)-\bar f(\hat x,\gD\varphi(\hat x,\hat t))\le 
O(|x_j-\hat x|+|\hat t-t_j|)+O(\rho)+2\d'.
$$
Hence, letting $j\to\infty$ first, and then letting $\rho,\d'\to0$, we obtain
$$
\varphi_t(\hat x,\hat t)-\bar f(\hat x,\gD \varphi(\hat x,\hat t))\le 0.
$$
The reverse inequality, when $\bar U - \varphi$ has a local minimum at $(\hat x,\hat t)$, follows in an entirely similar way, which concludes the proof of the claim.

5. By the uniqueness of the viscosity solution 
of~\eqref{e2.23} (see for instance~\cite{CIL}, Theorem~8.2), we conclude that the whole sequence $U_\ve(x,t)$ converges locally uniformly to $\bar U(x,t)$. Let 
$\bar{u}:=\D \bar U$.  Given any $\varphi\in C^{\infty}_c(Q)$, we have
\begin{align*}
&\int_{Q}u_\ve(x,t)\varphi(x,t)\,dx\,dt=
\int_{Q}\Delta U_\ve\varphi\,dx\,dt=\int_{Q}U_\ve 
\Delta\varphi\,dx\,dt\stackrel{\ve\to 0}{\longrightarrow}\\
&\qquad\qquad\int_{Q}\bar{U}\Delta \varphi\,dx\,dt=
\int_{Q}\bar{u}\varphi\,dx\,dt. 
\end{align*}
Consequently, $u_\ve(x,t)$  converges in the weak star topology of $L^\infty(Q)$ to $\bar u=\gD\bar U(x,t)$. 
Now, let  $\tilde u$ be the entropy solution of  \eqref{p4}. Let $\tilde U:=\D^{-1}\tilde u$. As it was done above, we easily prove that $\tilde U$ is the viscosity solution of \eqref{e2.23}. Therefore, $\tilde U\equiv \bar U$, and so $\tilde u=\bar u$. This proves the first assertion in the statement of the theorem.

6. Now, we observe that, for each $\ve>0$,  the identity
\begin{equation}\label{eUe}
\partial_tU_\ve-f(x,\frac{x}{\ve},\gD U_\ve)=0,
\end{equation}
holds in the sense of distributions in $Q$. Indeed, for any $\varphi\in C_0^\infty((0,\infty);H_0^1(\Om))$, we have
\begin{equation}\label{e1102}
\int_Q u_\ve\varphi_t-\nabla f(x,\frac{x}{\ve},u_\ve)\cdot\nabla\varphi\,dx\,dt=0.
\end{equation}
Given $\phi\in C_0^\infty(Q)$, we take $\varphi=\D^{-1}\phi$ in \eqref{e1102}, use $u_\ve=\D U_\ve$ and integration by parts, to obtain that
\begin{equation}\label{e1102'}
\int_Q U_\ve\phi_t+f(x,\frac{x}{\ve},\D U_\ve)\phi\,dx\,dt=0,
\end{equation}  
holds for any $\phi\in C_0^\infty(Q)$. Similarly, since $\bar u$ is the entropy solution of \eqref{p4}, we have
\begin{equation}\label{e1102''}
\int_Q \bar U\phi_t+\bar f(x,\D \bar U)\phi\,dx\,dt=0,
\end{equation}  
for any $\phi\in C_0^\infty(Q)$. In particular, for any $\phi\in C_0^\infty(Q)$,  we have
\begin{equation}\label{e120}
\begin{aligned}
\lim_{\ve\to0}\int_Q f(x,\frac{x}{\ve}, u_\ve(x,t)) \phi(x,t)\,dx\,dt&=-\lim_{\ve\to0}\int_Q U_\ve(x,t)\phi_t(x,t)\,dx\,dt\\
&=-\int_Q \bar U(x,t)\phi_t(x,t)\,dx\,dt\\
& =-\int_{Q}\bar f(x,\bar u(x,t))\phi(x,t)\,dx\,dt,
\end{aligned}
\end{equation}
so that $v_\ve(x,t):=f(x,\frac{x}{\ve}, u_\ve(x,t))$ weak star converges in $L^\infty(Q)$ to $\bar v(x,t):=\bar f(x,\bar u(x,t))$.

7. By applying Theorem~\ref{T:T2}, we may obtain a subnet of $u_\ve$, which we will still denote by $u_\ve$, and a (weakly measurable) parameterized family of probability measures on a compact interval of $\R$, $\{\nu_{x,z,t}\}$, $(x,z,t)\in\Om\X\KK\X(0,\infty)$,  which form a so called family of two-scale Young measures.  As in the previous section, let us consider the following parametrized family of probability measures $\{\mu_{x,z,t}\}$, $(x,z,t)\in\Om\X\KK\X(0,\infty)$, defined by
\begin{equation}\label{e121}
\la \mu_{x,z,t}, \zeta(\cdot)\ra =\la \nu_{x,z.t}, \zeta (f(x,z,\cdot))\ra, \quad \zeta\in C(\R).
\end{equation}
We claim that $\mu_{x,z,t}=\d_{\bar f(x,\bar u(x,t))}$ for a.e.\ $(x,t)\in Q$ and $\mm$-a.e.\ $z\in\KK$.

Indeed, let us  introduce the function $G_*:\bar\Om\X\R^n\X\R\to\R$ defined by
\begin{equation}\label{eG}
G_*(x,z,v):=\int_0^v g(x,z,s)\,ds,
\end{equation}
where $g$ is defined in {\bf(h1.1)}, in the case where $f_\ve$ is of type~1, for all $\ve>0$, or in {\bf(h2.1)}, when $f_\ve$ is of type~2. 
We can easily verify that $G(x,v,\cdot)\in\AA(\R^n)$. We also define $\bar G:\bar\Om\X\R\to\R$ by
\begin{equation}\label{eGbar}
\bar G_*(x,v):=\Medint_{\R^n} G_*(x,z,v)\,dz.
\end{equation}
The function $G_*(x,z,v)$ satisfies an uniform strict convexity condition, in the sense that, for $0<\theta<1$ and $v_1<v_2$, we have
\begin{equation}\label{eG2}
(1-\theta) G_*(x,z,v_1)+\theta G_*(x,z,v_2)-G_*(x,z,(1-\theta)v_1+\theta v_2)\ge C\theta(1-\theta)(v_2-v_1)^2,
\end{equation}
where $C>0$ is such that
$$
g(x,z,v_2)-g(x,z,v_1)\ge C(v_2-v_1),
$$
uniformly with respect to $(x,z)\in\bar\Om\X\R^n$, which can be easily verified.

We now  begin by using an argument by Visintin in theorem~2.1 of \cite{Vi}. So, we first observe that, for any $\ve>0$, $u_\ve(x,t)\in \po G_*(x,\frac{x}{\ve},v_\ve(x,t))$,
where $\po G_*(x,z,\cdot)$ denotes the subdifferential of the convex function $G_*(x,z,\cdot)$ defined by \eqref{eG}, which easily follows form the definition of $v_\ve(x,t)$. We also observe that $v_\ve$ is uniformly bounded in $L^2((0,T); H_0^1(\Om))$, for all $T>0$ (see the estimate in step 3 of Theorem~\ref{t01}).  On the other hand, since $\Om$ is bounded,  from \eqref{p1}, $u_\ve$ is uniformly bounded in $L^2((0,T);L^2(\Om))$ and in $H^1((0,T);H^{-1}(\Om))$,  for all $T>0$, and so, by Aubin lemma (see, e.g., \cite{LJ}), $u_\ve$ strongly converges to 
$\bar u$ in $L^2((0,T); H^{-1}(\Om))$. Hence, from the relation
\begin{equation}\label{e122}
G_*(x,\frac{x}{\ve},v_\ve(x,t))-G_*(x,\frac{x}{\ve}, \bar v(x,t))\le u_\ve(x,t)(v_\ve(x,t)-\bar v(x,t)),
\end{equation}
which follows from the convexity of $G_*(x,z,\cdot)$ and the fact that $u_\ve(x,t)\in\po G_*(x,\frac{x}{\ve}, v_\ve(x,t))$, it follows by Theorem~\ref{T:T2} that, for all $\phi\in C_0^\infty(Q)$, we have
\begin{equation}\label{e123}
\int_{Q}\int_{\KK}\la \mu_{x,z,t}, G_*(x,z, \cdot)\ra \phi(x,t)\,d\mm(z)\,dx\,dt\le \int_Q \bar G_*(x,\bar v(x,t))\phi(x,t)\,dx\,dt.
\end{equation}

Now, since $v_\ve(x,t)$ is uniformly bounded in $L^2((0,T);H_0^1(\Om))$, for all $T>0$, for any $\zeta\in C^1(\R)$, $\phi\in C_0^\infty(Q)$, and $\varphi\in\AA(\R^n)$, such that $\po_{z_i}\varphi, \po_{z_iz_j}^2\varphi\in\AA(\R^n)$, $i,j=1,\cdots,n$,  we have
\begin{equation}\label{e124}
\begin{aligned}
0&=\lim_{\ve\to0} \ve^2 \int_{Q}\nabla_x\zeta(v_\ve(x,t))\cdot\nabla_x (\phi(x,t)\varphi(\frac{x}{\ve}))\,dx\,dt\\
 &= \int_{Q}\int_{\KK}\la \mu_{x,z,t},\zeta(\cdot)\ra \D_z\varphi(z) \phi(x,t)\,d\mm(z)\,dx\,dt.
 \end{aligned}
 \end{equation}
 Hence, as in the proof of Theorem~\ref{T:6.1}, using Lemma~\ref{L:ort}, we conclude that $\mu_{x,z,t}=\bar \mu_{x,t}:=\int_{\KK}\mu_{x,z,t}\,d\mm(z)$, for a.e.\ $(x,t)\in Q$ and $\mm$-a.e.\ $z\in\KK$. But then, by \eqref{e123}, we have
 \begin{equation}\label{e125}
 \int_Q \la \bar \mu_{x,t}, \bar G_*(x,\cdot)\ra \phi(x,t)\,dx\,dt\le \int_Q \bar G_*(x,\bar v(x,t))\phi(x,t)\,dx\,dt.
 \end{equation}
 But, since $\bar v(x,t)=\int_{\R}\l\,d\bar\mu_{x,t}(\l)$, for a.e.\ $(x,t)\in Q$, from Jensen inequality it follows that 
 \begin{equation}\label{e126}
 \int_Q \la \bar \mu_{x,t}, \bar G_*(x,\cdot)\ra \phi(x,t)\,dx\,dt\ge \int_Q \bar G_*(x,\bar v(x,t))\phi(x,t)\,dx\,dt,
 \end{equation} 
 and so we have the equality
\begin{equation}\label{e127}
 \int_Q \la \bar \mu_{x,t}, \bar G_*(x,\cdot)\ra \phi(x,t)\,dx\,dt= \int_Q \bar G_*(x,\bar v(x,t))\phi(x,t)\,dx\,dt,
 \end{equation}  
which, from the strict convexity of $\bar G_*(x,\cdot)$ (see \eqref{eG2}), for $x\in Q$, implies that 
\begin{equation}\label{e128}
\mu_{x,z,t}=\d_{\bar v(x,t)},\quad \text{for a.e.\ $(x,t)\in Q$ and $\mm$-a.e.\ $z\in\KK$}.
\end{equation}

8. Now, using the definition of $\mu_{x,z,t}$, and the fact that, for $\mm$-a.e.\ $z\in\KK$ and a.e.\ $x\in\Om$, $g(x,z,f(x,z,u))=u$, for all $u$,  where for $f_\ve$ of type 2 we use \eqref{em0} in  {\bf (h2.1)}, we arrive at
\begin{equation}\label{e129}
\nu_{x,z,t}=\d_{g(x,z,\bar f(x,u(x,t)))},\quad \text{for $\mm$-a.e.\ $z\in\KK$, and a.e.\ $(x,t)\in Q$}.
\end{equation}

Finally, in the case where $f_\ve$ is of type~1, for all $\ve>0$, using Lemma~\ref{L:E},  or, in the case where $f_\ve$ is of type~2, for all $\ve>0$, using Lemma~\ref{T:3.2},  we obtain \eqref{crt}, which finishes the proof.

\end{proof}

\section*{Acknowledgements}

H.~Frid gratefully acknowledges the support of CNPq, grant 306137/2006-2 ,
and FAPERJ, grant E-26/152.192-2002.

\end{document}